
\documentclass{amsart}
\usepackage{amssymb,amsmath,amsthm}
\usepackage[dvips]{graphicx}

\let\oldmarginpar\marginpar
\renewcommand\marginpar[1]{\-\oldmarginpar[\raggedleft\footnotesize #1]%
{\raggedright\footnotesize #1}}

\begin{document}

\newtheorem{theorem}{Theorem}[section]
\newtheorem{corollary}[theorem]{Corollary}
\newtheorem{lemma}[theorem]{Lemma}
\newtheorem{proposition}[theorem]{Proposition}
\theoremstyle{definition}
\newtheorem{definition}[theorem]{Definition}
\theoremstyle{remark}
\newtheorem{remark}[theorem]{Remark}
\theoremstyle{definition}
\newtheorem{example}[theorem]{Example}

\numberwithin{equation}{section}

\def\R{{\mathbb R}}
\def\H{{\mathbb H}}
\def\rank{{\text{rank}\,}}
\def\bd{{\partial}}

\title[Pointwise slant and pointwise semi-slant submanifolds]{Pointwise slant and pointwise semi-slant submanifolds in almost contact metric manifolds}

\author{Kwang-Soon Park}
\address{Department of Mathematical Sciences, Seoul National University, Seoul 151-747, Republic of Korea}
\email{parkksn@gmail.com}

\keywords{slant; semi-slant; warped product; almost contact metric manifold}

\subjclass[2000]{53C15; 53C40; 53C42.}   

\begin{abstract}
As a generalization of slant submanifolds and semi-slant submanifolds, we introduce the notions of pointwise slant submanifolds
and pointwise semi-slant sunmanifolds of an almost contact metric manifold. We obtain a characterization at each notion, investigate
the topological properties of pointwise slant submanifolds, and give some examples of them. We also consider some distributions on cosymplectic,
Sasakian, Kenmotsu manifolds and deal with some properties of warped product pointwise semi-slant submanifolds. Finally, we give some inequalities
for the squared norm of the second fundamental form in terms of a warping function and a semi-slant function for warped product
submanifolds of cosymplectic, Sasakian, Kenmotsu manifolds.
\end{abstract}

\maketitle
\section{Introduction}\label{intro}
\addcontentsline{toc}{section}{Introduction}

Given a Riemannian manifold $(N,g)$ with some additional structures, there are several kinds of submanifolds:

almost complex submanifolds (\cite{K}, \cite{G11}, \cite{K0}, \cite{N}),
totally real submanifolds (\cite{CO}, \cite{E0}, \cite{CDVV}, \cite{CN}),
CR submanifolds (\cite{YK}, \cite{B0}, \cite{C333}, \cite{S0}),
QR submanifolds (\cite{B00}, \cite{KP}, \cite{BF}, \cite{GSK}),
slant submanifolds ((\cite{C3}, \cite{C}, \cite{L}, \cite{CCFF}, \cite{MMT}, \cite{S}),
pointwise slant submanifolds (\cite{E1}, \cite{CG}),
semi-slant submanifolds (\cite{P}, \cite{S00}, \cite{CCFF0}, \cite{KKK}),
pointwise semi-slant submanifolds \cite{S2}, pointwise almost h-slant submanifolds and pointwise almost h-semi-slant submanifolds \cite{P4}, etc.

As a generalization of almost complex submanifolds and totally real submanifolds of an almost Hermitian manifold, B. Y. Chen \cite{C3}
introduced a slant submanifold of an almost Hermitian manifold in 1990. After that, many geometers studied
slant submanifolds (\cite{C}, \cite{L}, \cite{CCFF}, \cite{MMT}, \cite{S}, \cite{ACCM}, etc.).

As a generalization of CR-submanifolds and slant submanifolds of an almost Hermitian manifold, N. Papaghiuc \cite{P} defined the notion
of semi-slant submanifolds of an almost Hermitian manifold in 1994. After that, many geometers investigated
semi-slant submanifolds (\cite{S00}, \cite{CCFF0}, \cite{KKK}, \cite{KK}, \cite{LL}, etc.).

As a generalization of slant submanifolds of an almost Hermitian manifold, F. Etayo \cite{E1} introduced pointwise slant submanifolds
of an almost Hermitian manifold under the name of quasi-slant submanifolds in 1998. After that, B. Y. Chen and O. J. Garay \cite{CG}
studied pointwise slant submanifolds and obtained many nice results in 2012. B. Sahin \cite{S2} also introduced pointwise semi-slant
submanifolds of a K\"{a}hler manifold in 2013. Using this notion, he obtained nice results on wraped product submanifolds of a K\"{a}hler manifold.
Moreover, the author \cite{P4} studied the notions of pointwise almost h-slant submanifolds and pointwise almost h-semi-slant submanifolds of
an almost quaternionic Hermitian manifold in 2014.

As a generalization of slant submanifolds and semi-slant submanifolds of an almost contact metric manifold, we will define the notions of
pointwise slant submanifolds and pointwise semi-slant submanifolds of an almost contact metric manifold.
Throughout the paper, we will see the similarity and the difference among cosymplectic manifolds, Sasakian manifolds, and Kenmotsu manifolds. 

The paper is organized as follows. In section 2 we remind some notions, which are used later.
In section 3 we recall some notions in almost contact metric manifolds, which are also used later.
In section 4 we give the definition of pointwise slant submanifolds of an almost contact metric manifold,
obtain a characterization, and deal with some properties of pointwise slant submanifolds.
In section 5 we investigate the topological properties of pointwise slant submanifolds of a cosymplectic manifold.
In section 6 we give some examples of pointwise slant submanifolds.
In section 7 we introduce the notion of pointwise semi-slant submanifolds of an almost contact metric manifold and obtain a characterization.
In section 8 we consider some distributions on cosymplectic, Sasakian, Kenmotsu manifolds and deal with the notion of totally umbilic submanifolds.
In section 9 we get the non-existence of some type of warped product pointwise semi-slant submanifolds and investigate the properties
of some warped product pointwise semi-slant submanifolds.
In section 10 we obtain inequalities for the squred norm of the second fundamental form in terms of a warping function and
a semi-slant function for a warped product submanifold in cosymplectic, Sasakian, Kenmotsu manifolds.
Finally, we give some examples of pointwise semi-slant submanifolds.

\section{Preliminaries}\label{Prel}

Let $(N, g)$ be a Riemannian manifold, where $N$ is a $n$-dimensional $C^{\infty}$-manifold
and $g$ is a Riemannian metric on $N$.
Let $M$ be a $m$-dimensional submanifold of $(N, g)$.

Denote by $TM^{\perp}$ the normal bundle of $M$ in $N$.

Denote by $\nabla$ and $\overline{\nabla}$ the Levi-Civita connections of $M$ and $N$, respectively.

Then the {\em Gauss} and {\em Weingarten formulas} are given by
\begin{eqnarray}
  \overline{\nabla}_X Y & = & \nabla_X Y + h(X, Y), \label{eq: gauss} \\
  \overline{\nabla}_X Z & = & -A_Z X + D_X Z, \label{eq: weing}
\end{eqnarray}
respectively, for tangent vector fields $X,Y\in \Gamma(TM)$ and a normal vector field $Z\in \Gamma(TM^{\perp})$, where $h$ denotes
the {\em second fundamental
form}, $D$ the {\em normal connection}, and $A$ the {\em shape operator} of $M$ in $N$.

The second fundamental form and the shape operator are related by
\begin{equation}\label{eq: shape}
\langle A_Z X, Y \rangle = \langle h(X, Y), Z \rangle,
\end{equation}
where $\langle \ , \ \rangle$ denotes the induced metric on $M$ as well as the Riemannian metric $g$ on $N$.

Choose a local orthonormal frame
$\{ e_1, \cdots, e_n \}$ of $TN$ such that $e_1, \cdots, e_m$ are tangent to $M$ and $e_{m+1}, \cdots, e_n$ are normal to $M$.

Then the {\em mean curvature vector} $H$ is defined by
\begin{equation}\label{eq: mean}
H := \frac{1}{m} trace \ h = \frac{1}{m}\sum_{i=1}^m h(e_i, e_i)
\end{equation}
and the {\em squared mean curvature} is given by $H^2 := \langle H, H \rangle$.

The {\em squared norm of the second fundamental form} $h$ is defined by
\begin{equation}\label{eq: sqnorm}
|| h ||^2 := \sum_{i,j=1}^m \langle h(e_i, e_j), h(e_i, e_j) \rangle.
\end{equation}
Let $(B, g_B)$ and $({\overline{F}}, g_{\overline{F}})$ be Riemannian manifolds, where $g_B$ and
$g_{\overline{F}}$ are Riemannian metrics on manifolds $B$ and ${\overline{F}}$, respectively.
Let $f$ be a positive $C^{\infty}$-function on $B$.
Consider the product manifold $B\times {\overline{F}}$ with the natural projections $\pi_1 : B\times {\overline{F}} \mapsto B$
 and $\pi_2 : B\times {\overline{F}} \mapsto {\overline{F}}$. The {\em warped product manifold} $M=B\times_f {\overline{F}}$ is
 the product manifold $B\times {\overline{F}}$ equipped with the Riemannian
 metric $g_M$ such that
\begin{equation}\label{eq: wmetr}
|| X ||^2 = || d\pi_1 X ||^2 + f^2(\pi_1(x)) || d\pi_2 X ||^2
\end{equation}
for any tangent vector $X\in T_x M$, $x\in M$.

Hence,
$$
g_M = g_B + f^2g_{\overline{F}}.
$$
We call the function $f$ the {\em warping function} of the warped product
manifold $M$ \cite {C6}.

If the warping function $f$ is constant, then the warped product manifold $M$ is called {\em trivial}.

Given vector fields $X\in \Gamma(TB)$ and $Y\in \Gamma(T{\overline{F}})$, we get their natural horizontal lifts $\widetilde{X},\widetilde{Y}\in \Gamma(TM)$
such that $d\pi_1 \widetilde{X} = X$ and $d\pi_2 \widetilde{Y} = Y$.

For convenience, we will identify $\widetilde{X}$ and $\widetilde{Y}$
with $X$ and $Y$, respectively.

Choose a local orthonormal frame $\{ e_1, \cdots, e_m \}$ of
the tangent bundle $TM$ of $M$ such that $e_1, \cdots, e_{m_1}\in \Gamma(TB)$ and $e_{m_1+1}, \cdots, e_{m}\in \Gamma(T{\overline{F}})$.

Then we have
\begin{equation}\label{eq: lapl2}
\triangle f = \sum_{i=1}^{m_1} ((\nabla_{e_i} e_i)f - e_i^2 f).
\end{equation}
Given unit vector fields $X,Y\in \Gamma(TM)$ such that $X\in \Gamma(TB)$ and $Y\in \Gamma(TF)$, we obtain
\begin{equation}\label{eq: warp2}
\nabla_X Y = \nabla_Y X = (X \ln f)Y,
\end{equation}
where $\nabla$ is the Levi-Civita connection of $(M, g_M)$.

Thus,
\begin{eqnarray}
K(X\wedge Y)
& = & \langle \nabla_Y\nabla_X X - \nabla_X\nabla_Y X, Y \rangle   \label{sect3}    \\
& = & \frac{1}{f} ((\nabla_X X)f - X^2 f),  \nonumber
\end{eqnarray}
where $K(X\wedge Y)$ denotes the sectional curvature of the plane $<X, Y>$ spanned by $X$ and $Y$ over $\mathbb{R}$.

Hence,
\begin{equation}\label{eq: sect4}
\frac{\triangle f}{f} = \sum_{i=1}^{m_1} K(e_i\wedge e_j)
\end{equation}
for each $j = m_1+1, \cdots, m$.

Throughout this paper, we will use the above notations.

\section{Almost contact metric manifolds}\label{contact}

In this section, we remind some notions in almost contact metric manifolds and we will use them later.

Let $N$ be a $(2n+1)$-dimensional $C^{\infty}$-manifold with a tensor field $\phi$ of type $(1,1)$, a vector field $\xi$, and a $1$-form $\eta$ such that
\begin{equation}\label{eq: struc2}
\phi^2 = -I + \eta\otimes \xi, \quad \eta(\xi) = 1,
\end{equation}
where $I$ denotes the identity endomorphism of $TN$.
Then we have \cite {B1}
\begin{equation}\label{eq: struc3}
\phi \xi = 0, \quad \eta \circ \phi = 0.
\end{equation}
We call $(\phi,\xi,\eta)$ an {\em almost contact structure} and $(N,\phi,\xi,\eta)$ an {\em almost contact manifold}.
If there is a Riemannian metric $g$ on $N$ such that
\begin{equation}\label{eq: struc4}
g(\phi X, \phi Y) = g(X, Y) - \eta(X)\eta(Y)
\end{equation}
for any vector fields $X,Y \in \Gamma(TN)$, then we call $(\phi,\xi,\eta,g)$ an {\em almost contact metric structure} and $(N,\phi,\xi,\eta,g)$
an {\em almost contact metric manifold}. The metric $g$ is called a {\em compatible metric}. By replacing $Y$ by $\xi$ at (\ref {eq: struc4}),
we obtain
\begin{equation}\label{eq: struc5}
\eta(X) = g(X, \xi).
\end{equation}
Define $\Phi(X, Y) := g(X, \phi Y)$ for vector fields $X,Y\in \Gamma(TN)$.
Since $\phi$ is anti-symmetric with respect to $g$, the tensor $\Phi$ is a 2-form on $N$ and is called the {\em fundamental 2-form} of
the almost contact metric structure $(\phi,\xi,\eta,g)$.
We can also choose a local orthonormal frame $\{ X_1,\cdots,X_n,\phi X_1,\cdots,\phi X_n,\xi \}$ of $TN$ and we call it a {\em $\phi$-frame}.
An almost contact metric manifold $(N,\phi,\xi,\eta,g)$ is said to be a {\em contact metric manifold} (or {\em almost Sasakian manifold}) \cite {FIP}
if it satisfies
\begin{equation}\label{eq: struc6}
\Phi = d\eta.
\end{equation}
It is easy to check that given a contact metric manifold $(N,\phi,\xi,\eta,g)$, we get
\begin{equation}\label{eq: struc7}
(d\eta)^n\wedge \eta \neq 0.
\end{equation}
The {\em Nijenhuis tensor} of a tensor field $\phi$ is defined by
\begin{equation}\label{eq: struc8}
N(X, Y) := \phi^2 [X,Y] + [\phi X,\phi Y] - \phi [\phi X,Y] - \phi [X,\phi Y]
\end{equation}
for any vector fields $X,Y\in \Gamma(TN)$.
We call the almost contact metric structure  $(\phi,\xi,\eta,g)$ {\em normal} if
\begin{equation}\label{eq: struc9}
N(X, Y) + 2d\eta(X, Y) \xi = 0
\end{equation}
for any vector fields $X,Y\in \Gamma(TN)$.

A contact metric manifold $(N,\phi,\xi,\eta,g)$ is said to be a {\em $K$-contact manifold} if the characteristic vector field $\xi$ is Killing.
It is well-known that for a contact metric manifold $(N,\phi,\xi,\eta,g)$, $\xi$ is Killing if and only if the tensor $\bar{h} := \frac{1}{2} L_{\xi} \phi$
vanishes, where $L$ denotes the Lie derivative \cite {B1}.

Given a contact metric manifold $N = (N,\phi,\xi,\eta,g)$, we know that (i) $\bar{h}$ is a symmetric operator,
(ii) $\overline{\nabla}_X \xi = -\phi X - \phi \bar{h}X$ for $X\in \Gamma(TN)$, where $\overline{\nabla}$ is the Levi-Civita connection of $N$,
(iii) $\bar{h}$ anti-commutes with $\phi$
and $trace (\bar{h}) = 0$ \cite {B1}.
Using the above three properties, A. Lotta proved Theorem \ref {thm: ang02} \cite {L}.

An almost contact metric manifold $(N,\phi,\xi,\eta,g)$ is called a {\em Sasakian manifold} if it is contact and normal.
Given an almost contact metric manifold $(N,\phi,\xi,\eta,g)$, we know that it is Sasakian if and only if
\begin{equation}\label{eq: struc10}
(\overline{\nabla}_X \phi)Y = g(X, Y)\xi - \eta(Y) X
\end{equation}
for $X,Y\in \Gamma(TN)$ \cite {B1}.
If an almost contact metric manifold $(N,\phi,\xi,\eta,g)$ is Sasakian, then we have
\begin{equation}\label{eq: struc11}
\overline{\nabla}_X \xi = -\phi X
\end{equation}
for $X\in \Gamma(TN)$ \cite {B1}.

Moreover, a Sasakian manifold is a $K$-contact manifold \cite {B1}.

An almost contact metric manifold $(N,\phi,\xi,\eta,g)$ is said to be a {\em Kenmotsu manifold} if it satisfies
\begin{equation}\label{eq: struc12}
(\overline{\nabla}_X \phi)Y = g(\phi X, Y)\xi - \eta(Y) \phi X
\end{equation}
for $X,Y\in \Gamma(TN)$ \cite {B1}.
From (\ref {eq: struc12}), by replacing $Y$ by $\xi$, we easily obtain
\begin{equation}\label{eq: struc13}
\overline{\nabla}_X \xi = X - \eta(X)\xi
\end{equation}
for $X\in \Gamma(TN)$ \cite {B1}.

An almost contact metric manifold $(N,\phi,\xi,\eta,g)$ is called an {\em almost cosymplectic manifold} if $\eta$ and $\Phi$ are closed.
An almost cosymplectic manifold $(N,\phi,\xi,\eta,g)$ is said to be a {\em cosymplectic manifold} if it is normal \cite {FIP}.
Given an almost contact metric manifold $(N,\phi,\xi,\eta,g)$, we also know that it is cosymplectic if and only if
$\phi$ is parallel (i.e., $\overline{\nabla} \phi = 0$) \cite {B1}.

Given a cosymplectic manifold $(N,\phi,\xi,\eta,g)$, we easily get
\begin{equation}\label{eq: struc14}
\overline{\nabla} \phi = 0, \ \overline{\nabla} \eta = 0, \ \text{and} \ \overline{\nabla} \xi = 0.
\end{equation}

Throughout this paper, we will use the above notations.

\section{Pointwise slant submanifolds}\label{slant}

In this section we define the notion of pointwise slant submanifolds of an almost contact metric manifold and study its properties.

\begin{definition}
Let $N = (N,\phi,\xi,\eta,g)$ be a $(2n+1)$-dimensional almost contact metric manifold and $M$ a submanifold of $N$.
The submanifold $M$ is called a {\em pointwise slant submanifold} if at each given point $p\in M$ the angle $\theta = \theta(X)$
between $\phi X$ and the space $M_p$ is constant for nonzero $X\in M_p$, where $M_p := \{ X\in T_p M \mid g(X, \xi(p)) = 0 \}$.
\end{definition}

We call the angle $\theta$ a {\em slant function} as a function on $M$.

\begin{remark}
\begin{enumerate}
\item In other papers (\cite {L}, \cite {CCFF}, etc.), the slant angle $\theta$ of a submanifold $M$ in an almost contact metric manifold
$(N,\phi,\xi,\eta,g)$ is defined differently as follows:

Assume that $\xi\in \Gamma(TM)$. Given a point $p\in M$, if the angle $\theta = \theta(X)$ between $\phi X$ and $T_p M$ is constant for nonzero
$X\in T_p M - \{ \xi(p) \}$, then we call the angle $\theta$ a slant angle.

Two definitions for the slant angle of a submanifold in an almost contact metric manifold are essentially same.
But our definition has some advantages as follows: First of all, our definition does not depend on
whether the vector field $\xi$ is tangent to $M$ or not.
Secondly, we have more simple form like this (See Lemma \ref {lem: ang02}): $T^2 X = -\cos^2 \theta X$
for $X\in M_p$, which is the same form with the case of an almost Hermitian manifold, etc..

\item If $\theta : M \mapsto [0, \frac{\pi}{2})$, then by using Theorem 3.3 of \cite {L}, we obtain that either $\xi$ is tangent to $M$
or $\xi$ is normal to $M$.

\item Like Examples of section \ref{exam1}, we need to deal with our notion
both when $\xi$ is tangent to $M$ and when $\xi$ is normal to $M$ so that by (1), our definition is more favorite.
\end{enumerate}
\end{remark}

\begin{remark}
\begin{enumerate}
\item If the slant function $\theta$ is constant on $M$, then we call $M$ a {\em slant submanifold}.

\item If $\theta = 0$ on $M$, (which implies $\phi(TM) \subset TM$), then we call $M$ an {\em invariant submanifold}.

\item If $\theta = \frac{\pi}{2}$ on $M$, (which implies $\phi(TM) \subset TM^{\perp}$), then we call $M$ an {\em anti-invariant submanifold}.
\end{enumerate}
\end{remark}

Let $M$ be a pointwise slant submanifold of an almost contact metric manifold $(N,\phi,\xi,\eta,g)$ with the slant function $\theta$.

For $X\in \Gamma(TM)$, we write
\begin{equation}\label{eq: tens1}
\phi X = TX + FX,
\end{equation}
where $TX\in \Gamma(TM)$ and $FX\in \Gamma(TM^{\perp})$.

For $Z\in \Gamma(TM^{\perp})$, we obtain
\begin{equation}\label{eq: tens2}
\phi Z = tZ + fZ,
\end{equation}
where $tZ\in \Gamma(TM)$ and $fZ\in \Gamma(TM^{\perp})$.

Let $\displaystyle{T^1 M := \bigcup_{p\in M} M_p = \bigcup_{p\in M} \{ X\in T_p M \mid g(X, \xi(p)) = 0 \}}$.

Then we get

\begin{lemma}\label{lem: ang02}
Let $M$ be a submanifold of an almost contact metric manifold $N = (N,\phi,\xi,\eta,g)$.
Then $M$ is a pointwise slant submanifold of $N$ if and only if $T^2 = -\cos^2 \theta\cdot I$ on $T^1 M$ for some function $\theta : M\mapsto \mathbb{R}$.
\end{lemma}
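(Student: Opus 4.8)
The plan is to prove the characterization by analyzing the angle $\theta(X)$ between $\phi X$ and the tangent space $M_p$ for a nonzero vector $X \in M_p$, using the orthogonal decomposition $\phi X = TX + FX$ from \eqref{eq: tens1}. First I would observe that since $X \in M_p$ (so $\eta(X) = 0$), the compatibility condition \eqref{eq: struc4} gives $g(\phi X, \phi X) = g(X,X) - \eta(X)^2 = g(X,X)$, hence $\|\phi X\| = \|X\|$ and, because $TX$ is exactly the orthogonal projection of $\phi X$ onto $T_pM$, we get
\begin{equation}
\cos\theta(X) = \frac{\|TX\|}{\|\phi X\|} = \frac{\|TX\|}{\|X\|}.
\end{equation}
Thus $\theta(X)$ is controlled entirely by the ratio $\|TX\|^2/\|X\|^2 = g(TX,TX)/g(X,X)$.

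Next I would relate $g(TX,TX)$ to $g(T^2X,X)$. Since $T$ is the tangential part of $\phi$ and $\phi$ is skew-symmetric with respect to $g$ (equivalently $\Phi$ is a 2-form), $T$ is skew-symmetric on $T_pM$, so $g(TX,TX) = -g(T^2X,X)$. Therefore
\begin{equation}
\cos^2\theta(X) = \frac{-g(T^2X, X)}{g(X,X)}.
\end{equation}
For the ``if'' direction, if $T^2 = -\cos^2\theta \cdot I$ on $T^1M$ for some function $\theta$ on $M$, then $-g(T^2X,X) = \cos^2\theta(p)\, g(X,X)$ for every nonzero $X \in M_p$, so $\cos^2\theta(X) = \cos^2\theta(p)$ is independent of $X$; hence the angle is constant on $M_p$ and $M$ is pointwise slant. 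For the ``only if'' direction, suppose $M$ is pointwise slant with slant function $\theta$. Then for every nonzero $X \in M_p$ we have $g(T^2X,X) = -\cos^2\theta(p)\, g(X,X)$, i.e. $g\big((T^2 + \cos^2\theta(p) I)X, X\big) = 0$ for all $X \in M_p$. The key point is then that $T^2 + \cos^2\theta \cdot I$ is a symmetric endomorphism of $M_p$ (since $T$ is skew-symmetric, $T^2$ is symmetric), and a symmetric operator whose associated quadratic form vanishes on all of $M_p$ must itself vanish on $M_p$; this gives $T^2 = -\cos^2\theta \cdot I$ on $T^1M$.

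A few technical points need care. One must check that $T$ maps $M_p$ into $M_p$ — this follows since for $X \in M_p$, $g(TX, \xi) = g(\phi X, \xi) - g(FX,\xi) = -g(X,\phi\xi) = 0$ by \eqref{eq: struc3}, so $T^2$ is genuinely an endomorphism of $M_p$ and the symmetric-operator argument applies fiberwise. One should also note that $-g(T^2X,X) = g(TX,TX) \ge 0$ and $\le g(\phi X, \phi X) = g(X,X)$ (since $\|TX\| \le \|\phi X\|$), which guarantees $\cos^2\theta(X) \in [0,1]$ so that a genuine angle $\theta(X) \in [0,\pi/2]$ exists; this also shows the function $\theta$ obtained is automatically real-valued, matching the statement $\theta : M \to \mathbb{R}$. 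The main obstacle, though it is more conceptual than computational, is the ``only if'' direction: passing from the scalar identity $g((T^2 + \cos^2\theta\, I)X,X) = 0$ for all $X \in M_p$ to the operator identity on $M_p$ — this is exactly where symmetry of $T^2$ is essential (the polarization/diagonalization argument fails without it), and it is also the step that explains why this clean form is special to the definition using $M_p$ rather than all of $T_pM$.
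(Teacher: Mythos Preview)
Your proposal is correct and follows essentially the same route as the paper: both compute $\cos\theta = \|TX\|/\|X\|$, rewrite $\|TX\|^2 = -g(T^2X,X)$ via the skew-symmetry of $T$, and then pass from the scalar identity $g((T^2+\cos^2\theta\,I)X,X)=0$ to the operator identity using that $T^2+\cos^2\theta\,I$ is symmetric --- the paper does this by the explicit substitution $X\mapsto X+Y$, while you invoke the equivalent general fact about symmetric operators with vanishing quadratic form. Your added checks (that $T$ preserves $M_p$ and that the ratio lies in $[0,1]$) go slightly beyond what the paper writes out; note however that your verification of $g(TX,\xi)=0$ silently drops the term $g(FX,\xi)$, which is automatically zero only when $\xi$ is tangent or normal to $M$ --- the paper is equally silent on this point.
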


\begin{proof}
Suppose that $M$ is a pointwise slant submanifold of $N$ with the slant function $\theta : M\mapsto \mathbb{R}$. Given a point $p\in M$,
if $\theta(p) = \frac{\pi}{2}$, then trivial! If $\theta(p) \neq \frac{\pi}{2}$, then for any nonzero $X\in M_p$ we have
\begin{equation}\label{eq: ang02}
\cos \theta(p) = \frac{g(\phi X, TX)}{||\phi X|| \ ||TX||} = \frac{||TX||}{||X||}
\end{equation}
so that $\cos^2 \theta(p) g(X, X) = g(TX, TX) = -g(T^2X, X)$.
Replacing $X$ by $X+Y$, $Y\in M_p$, we obtain
\begin{equation*}\label{eq: ang03}
g((T^2 + \cos^2 \theta(p) I)X, Y) + g(X, (T^2 + \cos^2 \theta(p) I)Y) = 0.
\end{equation*}
But $T^2 + \cos^2 \theta(p) I$ is also symmetric so that
\begin{equation}\label{eq: ang04}
(T^2 + \cos^2 \theta(p) I)X = 0.
\end{equation}
Conversely, if $T^2 = -\cos^2 \theta I$ on $T^1 M$ for some function $\theta : M\mapsto \mathbb{R}$, then we have
$g(TX, TX) = -g(T^2 X, X) = \cos^2 \theta(p) g(X, X)$ for any nonzero $X\in M_p$, $p\in M$
so that
\begin{equation}\label{eq: ang05}
\cos^2 \theta(p) = \frac{g(TX, TX)}{g(X, X)},
\end{equation}
which implies that $\arccos(|\cos \theta(p)|)$ is a slant function on $M$.

Hence, $M$ is a pointwise slant submanifold of $N$.
\end{proof}

\begin{remark}
Let $M$ be a pointwise slant submanifold of an almost contact metric manifold $(N,\phi,\xi,\eta,g)$ with the slant function $\theta$.
By using Lemma \ref {lem: ang02}, we easily get
\begin{equation}\label{eq: ang06}
g(TX, TY) = \cos^2 \theta g(X, Y),
\end{equation}
\begin{equation}\label{eq: ang07}
g(FX, FY) = \sin^2 \theta g(X, Y),
\end{equation}
for $X,Y\in \Gamma(T^1 M)$.
At each given point $p\in M$ with $0\leq \theta(p)< \frac{\pi}{2}$, by using (\ref {eq: ang06}) we can choose an orthonormal basis
$\{ X_1,\sec \theta TX_1, \cdots, X_k,\sec \theta TX_k \}$ of $M_p$.
\end{remark}

Using Lemma \ref {lem: ang02}, we obtain

\begin{corollary}
Let $M$ be a pointwise slant submanifold of an almost contact metric manifold $(N,\phi,\xi,\eta,g)$ with the nonconstant slant function
$\theta : M\mapsto \mathbb{R}$.
Then $M$ is even-dimensional.
\end{corollary}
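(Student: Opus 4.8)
The plan is to use Lemma \ref{lem: ang02} together with the observation that a nonconstant slant function forces the relevant distribution to be proper slant (i.e., $\theta$ takes values away from $0$ and $\frac{\pi}{2}$ on an open set, but more importantly one works pointwise). First I would recall that at each point $p \in M$ the endomorphism $T$ restricted to $M_p$ satisfies $T^2 = -\cos^2\theta(p)\, I$. The idea is to show that the dimension $k$ of $M_p$ is even; since the dimension of a fibre of a distribution that is well-defined on a connected manifold is locally constant, and in fact $\dim M_p$ equals $\dim T_pM$ or $\dim T_pM - 1$ according to whether $\xi$ is normal or tangent, this pointwise evenness will pin down the parity of $\dim M$.

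The key step is the linear-algebra fact: if $V$ is a real inner product space and $T : V \to V$ is a skew-symmetric endomorphism with $T^2 = -c^2 I$ for some real $c \neq 0$, then $\dim V$ is even. I would prove this by noting $T$ is invertible (since $T^2 = -c^2 I$ with $c \neq 0$), and a skew-symmetric invertible operator on a real vector space must act on an even-dimensional space — for instance because $\det T = \det T^{t} = \det(-T) = (-1)^{\dim V}\det T$, and $\det T \neq 0$ forces $(-1)^{\dim V} = 1$. Alternatively, pair up basis vectors via $X \mapsto \sec\theta(p)\, TX$ as already suggested in the Remark following Lemma \ref{lem: ang02}: for a unit $X \in M_p$, the vectors $X$ and $\sec\theta(p)\,TX$ are orthonormal (using (\ref{eq: ang06})), span a $T$-invariant plane, and on its orthogonal complement within $M_p$ the same structure recurs, so $M_p$ decomposes into an orthogonal direct sum of $2$-dimensional $T$-invariant subspaces, giving $\dim M_p = 2k$.

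Now to transfer this to $\dim M$: the hypothesis that $\theta$ is nonconstant means in particular that $\theta$ is not identically $\frac{\pi}{2}$, so by Remark (part (2) of the first Remark block, via Theorem 3.3 of \cite{L}) applied on the open set where $\theta < \frac{\pi}{2}$ — or more directly by the structure of pointwise slant submanifolds — the vector field $\xi$ is either tangent to $M$ everywhere or normal to $M$ everywhere. Actually, to avoid invoking that dichotomy, I would instead argue directly: nonconstant $\theta$ cannot be identically $0$, so on a dense open set $c = \cos\theta(p) \neq 1$, but what I really need is $c \neq 0$; if $\theta \equiv \frac{\pi}{2}$ on some open piece then $M$ is anti-invariant there and the argument is different, so the cleanest route is to note that since $\theta$ is nonconstant it is continuous and nonconstant, hence there is an open set on which $0 < \theta < \frac{\pi}{2}$, on which $\xi$ is tangent or normal to $M$; correspondingly $\dim M = \dim M_p + 1$ or $\dim M = \dim M_p$. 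Hmm — this gives evenness of $\dim M$ only in the normal case.

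Here is the main obstacle, and how I would resolve it. When $\xi$ is tangent, $\dim M = \dim M_p + 1 = 2k+1$ is \emph{odd}, so the corollary as stated must be implicitly assuming $\xi$ is normal, or the "nonconstant" hypothesis must be doing more work. Rereading: the corollary likely intends the case where $\theta$ never equals $\frac{\pi}{2}$ somewhere forces, via Theorem 3.3 of \cite{L}, a specific configuration; but the honest statement is that $\dim M_p$ is even, and $\dim M$ has parity opposite to the indicator of "$\xi \in \Gamma(TM)$". I would therefore write the proof to establish the robust fact — \emph{$M_p$ is even-dimensional at every point, hence $\dim T^1M$ is even, and $\dim M \in \{\dim M_p, \dim M_p+1\}$} — and then conclude "$M$ is even-dimensional" under the reading that the ambient convention (as in the Examples where $\xi$ is normal) has $\xi$ normal to $M$; if instead the paper's convention in this corollary is $\xi$ tangent, then strictly $M$ should read "odd-dimensional" and I would flag this. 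The mathematically substantive content — the pairing $X \leftrightarrow \sec\theta\, TX$ giving evenness of the slant distribution's rank — is the part I would actually carry out in detail, and it is short; the parity bookkeeping about $\xi$ is the only place where care is needed.

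Concretely, the proof I would write:

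\begin{proof}
Fix $p \in M$ and set $c := \cos\theta(p)$. Since $\theta$ is nonconstant and continuous, there is a nonempty open subset $U \subseteq M$ on which $0 < \theta < \frac{\pi}{2}$; we may assume $p \in U$, so $0 < c < 1$. By Lemma \ref{lem: ang02}, $T^2 = -c^2 I$ on $M_p$, and $T$ restricted to $M_p$ is skew-symmetric by the anti-symmetry of $\phi$. As $c \neq 0$, $T|_{M_p}$ is invertible. Pick a unit vector $X_1 \in M_p$; by (\ref{eq: ang06}), $\{X_1, \sec\theta(p)\, TX_1\}$ is an orthonormal set spanning a $2$-dimensional $T$-invariant subspace $V_1 \subseteq M_p$. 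The orthogonal complement $V_1^{\perp}$ within $M_p$ is again $T$-invariant (for $X \perp V_1$ and $Y \in V_1$, $g(TX,Y) = -g(X,TY) = 0$ since $TY \in V_1$), and $T|_{V_1^{\perp}}$ still satisfies $(T|_{V_1^{\perp}})^2 = -c^2 I$ with $c \neq 0$. Iterating, $M_p = V_1 \oplus \cdots \oplus V_k$ as an orthogonal direct sum of $2$-dimensional subspaces, so $\dim M_p = 2k$ is even. By Remark following the Definition, on $U$ either $\xi$ is tangent to $M$ or $\xi$ is normal to $M$; under the standing convention here $\xi$ is normal to $M$, so $T^1 M = TM$ on $U$ and hence $\dim M = \dim M_p = 2k$. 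Since $\dim M$ is a constant integer, $M$ is even-dimensional.
\end{proof}
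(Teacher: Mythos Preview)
Your core argument---using Lemma~\ref{lem: ang02} to get $T^2 = -\cos^2\theta\, I$ on $M_p$, then decomposing $M_p$ into orthogonal $T$-invariant planes via the pairing $X \mapsto \sec\theta\, TX$---is exactly what the paper intends (it gives no proof beyond citing the lemma, and the orthonormal basis $\{X_1,\sec\theta\,TX_1,\dots,X_k,\sec\theta\,TX_k\}$ displayed in the Remark following the lemma is precisely this decomposition). So on the substantive linear algebra you match the paper.

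But you have also correctly put your finger on a real problem with the statement itself. The corollary as written is false when $\xi$ is tangent to $M$: the paper's own first example in Section~\ref{exam1} exhibits a $3$-dimensional pointwise slant submanifold of $(\mathbb{R}^5,\phi,\xi,\eta,g)$ with nonconstant slant function $k(x_1,x_2,x_3)=x_2$ and $\xi$ tangent---odd-dimensional, directly contradicting the corollary. What the linear-algebra step actually proves is that $M_p$ (equivalently the fibre of $T^1M$) is even-dimensional wherever $\cos\theta \neq 0$; this forces $\dim M$ even only under the further hypothesis that $\xi$ is normal to $M$, which is not assumed here. Your decision to prove the robust fact about $\dim M_p$ and explicitly flag the $\xi$-parity bookkeeping is the right call; the gap lies in the paper's formulation of the corollary, not in your argument.
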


In a similar way to Proposition 2.1 of \cite {E1}, we have

\begin{proposition}\label{prop: ang02}
Let $M$ be a $2$-dimensional submanifold of an almost contact metric manifold $(N,\phi,\xi,\eta,g)$.
Then $M$ is a pointwise slant submanifold of $N$.
\end{proposition}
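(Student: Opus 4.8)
The plan is to verify the criterion of Lemma \ref{lem: ang02}: it suffices to show that on the subspace $M_p \subset T_pM$ we have $T^2 = -\cos^2\theta(p)\cdot I$ for an appropriate real number $\theta(p)$, and then check that $p \mapsto \theta(p)$ is constant on each $M_p$ by construction (which is automatic once $T^2$ is a scalar multiple of the identity on $M_p$). So the real content is to understand the operator $T$ restricted to $M_p$ when $\dim M = 2$.

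First I would split into two cases according to whether $\xi(p) \in T_pM$ or not. If $\xi(p)$ is \emph{not} tangent to $M$, then $M_p = T_pM$ is $2$-dimensional; if $\xi(p)$ \emph{is} tangent to $M$, then $M_p$ is $1$-dimensional (and at such points one must be a little careful, since a nonzero $X \in M_p$ spans $M_p$, so $TX \in M_p$ forces $TX = \lambda X$ with $\lambda \in \R$; but $T$ is skew-symmetric on $T^1M$, so $\lambda = 0$, giving $T = 0 = -\cos^2(\tfrac\pi2)I$ there). The main case is therefore $\dim M_p = 2$. Here I would use that $T : M_p \to M_p$ is a skew-symmetric endomorphism of a $2$-dimensional inner product space. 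Any such operator has the form $T = c J$ in a suitable orthonormal basis, where $J$ is rotation by $\tfrac\pi2$ and $c = c(p) \ge 0$; equivalently $T^2 = -c(p)^2 I$ on $M_p$. Setting $\cos^2\theta(p) := c(p)^2$ — which lies in $[0,1]$ because $\|TX\| = \|cJX\| = c\|X\| \le \|\phi X\| \le \|X\|$ for $X \in M_p$ (using $\|\phi X\|^2 = \|X\|^2 - \eta(X)^2 = \|X\|^2$) — we get $T^2 = -\cos^2\theta(p)\cdot I$ on $M_p$, as required.

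To keep the two cases uniform I would phrase the skew-symmetry of $T$ on $T^1M$ once at the start: for $X,Y \in M_p$, $g(TX,Y) = g(\phi X - FX, Y) = g(\phi X, Y) = -g(X,\phi Y) = -g(X, TY + FY) = -g(X,TY)$, using that $FX, FY$ are normal and $\phi$ is $g$-skew. Then the $1$-dimensional case is the remark above, and the $2$-dimensional case is the normal form for a skew-symmetric operator on $\R^2$. Finally, invoking Lemma \ref{lem: ang02} with this function $\theta : M \to \R$ (well-defined by $\cos^2\theta(p) = c(p)^2$, e.g. $\theta(p) = \arccos c(p)$) concludes that $M$ is pointwise slant.

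The only mild obstacle is bookkeeping at points where $\xi$ becomes tangent, so that $\dim M_p$ jumps from $2$ to $1$; one must make sure the scalar-multiple-of-identity conclusion still holds there (it does, trivially, since $T = 0$ on a line) and that the resulting $\theta$ is an honest real-valued function on all of $M$ — it need not be continuous, but Lemma \ref{lem: ang02} does not require continuity. Everything else is the elementary classification of $2\times 2$ skew-symmetric matrices, so there is no serious computation to grind through.
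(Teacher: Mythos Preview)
Your overall strategy via Lemma~\ref{lem: ang02} is sound and, in the genuinely $2$-dimensional case, cleaner than the paper's explicit basis computation. However, your case split is wrong: the dimension of $M_p=\{X\in T_pM : g(X,\xi(p))=0\}$ is governed by whether $\xi(p)$ has a nonzero \emph{tangential} component, i.e.\ by whether $\xi(p)\in T_pM^{\perp}$ or not --- not by whether $\xi(p)\in T_pM$. If $\xi(p)$ has both a nonzero tangential part and a nonzero normal part (so $\xi(p)\notin T_pM$ and $\xi(p)\notin T_pM^{\perp}$), then $M_p$ is the orthogonal complement in $T_pM$ of that tangential part, hence $1$-dimensional, not $2$-dimensional as you claim. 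The paper splits exactly along this line: if $\xi\notin T_pM^{\perp}$ then $\dim M_p=1$ and $g(\phi X,X)=0$ immediately gives angle $\pi/2$; if $\xi\in T_pM^{\perp}$ then $M_p=T_pM$ and a direct orthonormal-basis computation shows the angle is the constant $|\alpha|=|g(X,\phi Y)|$.

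A second, smaller point: in your $1$-dimensional case you write ``$TX\in M_p$ forces $TX=\lambda X$'', but $T$ is only guaranteed to land in $T_pM$, not in $M_p$; when $\xi$ is neither tangent nor normal, $TX$ need not lie in $M_p$. The clean fix (which is what the paper does) is to bypass $T$ altogether here: since $M_p$ is spanned by $X$ and $g(\phi X,X)=0$, the orthogonal projection of $\phi X$ onto $M_p$ vanishes, so the angle between $\phi X$ and $M_p$ is $\pi/2$ straight from the definition. Alternatively, just observe that when $\dim M_p=1$ the slant condition is vacuous because all nonzero vectors in $M_p$ are scalar multiples of one another. Either way, once you repair the case split the argument goes through; your skew-symmetric normal-form treatment of the $2$-dimensional case is correct and is essentially the conceptual version of the paper's coordinate computation.
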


\begin{proof}
Given a point $p\in M$, we consider it at two cases.

If $\xi\notin \Gamma(T_p M^{\perp})$, then since $\dim M_p = 1$ and $g(\phi X, X) = 0$ for $X\in M_p$, we immediately obtain $\theta(p) = \frac{\pi}{2}$.

If $\xi\in \Gamma(T_p M^{\perp})$, then we choose an orthonormal basis $\{ X,Y \}$ of $T_p M$. Let $\alpha := g(X, \phi Y)$.
Given any nonzero vector $Z=aX + bY\in T_p M$, $a,b\in \mathbb{R}$, we get
\begin{equation*}\label{eq: ang08}
TZ = g(\phi Z, X)X + g(\phi Z, Y)Y = bg(X, \phi Y)X - ag(X, \phi Y)Y = b\alpha X - a\alpha Y
\end{equation*}
so that
\begin{equation*}\label{eq: ang09}
\cos \theta(Z) = \frac{g(\phi Z, TZ)}{||\phi Z|| \ ||TZ||} = \frac{||TZ||}{||Z||} = |\alpha|,
\end{equation*}
which means the result.
\end{proof}

\begin{remark}
Proposition \ref {prop: ang02} gives us a kind of examples for pointwise slant submanifolds.
\end{remark}

In a similar way to Theorem 2.4 of \cite {E1}, we obtain

\begin{theorem}\label{thm: ang01}
Let $M$ be a pointwise slant connected totally geodesic submanifold of a cosymplectic manifold $(N,\phi,\xi,\eta,g)$.
Then $M$ is a slant submanifold of $N$.
\end{theorem}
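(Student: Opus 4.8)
The plan is to show that the slant function $\theta$ is locally constant on $M$; since $M$ is connected, this forces $\theta$ to be globally constant, hence $M$ is slant. The key analytic input is that on a cosymplectic manifold $\overline{\nabla}\phi = 0$ (equation \eqref{eq: struc14}) and that $M$ is totally geodesic, i.e.\ $h \equiv 0$, so that by the Gauss formula \eqref{eq: gauss} we have $\overline{\nabla}_X Y = \nabla_X Y$ for all $X, Y \in \Gamma(TM)$. Combining these, for $X, Y \in \Gamma(TM)$ one computes $\overline{\nabla}_X(\phi Y) = \phi(\overline{\nabla}_X Y) = \phi(\nabla_X Y)$; decomposing $\phi Y = TY + FY$ and $\phi(\nabla_X Y) = T(\nabla_X Y) + F(\nabla_X Y)$ via \eqref{eq: tens1}, and using the Weingarten formula \eqref{eq: weing} on the $FY$ term together with $h \equiv 0$, the tangential part yields that $T$ is parallel as a bundle endomorphism along $M$: $(\nabla_X T)Y = 0$ for $X, Y \in \Gamma(TM)$.

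Next I would differentiate the characterization from Lemma~\ref{lem: ang02}. Restricting attention to vector fields $X \in \Gamma(T^1 M)$ (one should first check this is compatible, using $\overline{\nabla}\xi = 0$ on a cosymplectic manifold so that $\xi$ is parallel and $T^1 M$ is a parallel subbundle), we have $T^2 X = -\cos^2\theta \, X$. Applying $\nabla_Y$ to both sides and using $\nabla T = 0$, the left side gives $T^2(\nabla_Y X)$, while the right side gives $-\cos^2\theta\,(\nabla_Y X) + 2\cos\theta\sin\theta\,(Y\theta)\, X$. Since $T^2(\nabla_Y X) = -\cos^2\theta\,(\nabla_Y X)$ as well (as long as $\nabla_Y X$ stays in $T^1 M$, which it does), we obtain $2\cos\theta\sin\theta\,(Y\theta)\,X = 0$ for all $Y$ and all nonzero $X \in T^1 M$. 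At points where $\theta \not\equiv 0, \tfrac{\pi}{2}$ this gives $Y\theta = 0$; the boundary cases $\theta = 0$ and $\theta = \tfrac{\pi}{2}$ are handled separately, or absorbed by noting these are relatively closed conditions and $\theta$ is continuous, so on the interior of each the argument applies and $\theta$ is constant there.

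The main obstacle I anticipate is the careful bookkeeping at points (or on the open set) where $\theta = 0$ or $\theta = \frac{\pi}{2}$, since there the derivation $Y\theta = 0$ is not immediate from $2\cos\theta\sin\theta\,(Y\theta) = 0$, and one must argue by continuity and connectedness that $\theta$ cannot transition between the regimes $\{\theta = 0\}$, $\{0 < \theta < \tfrac{\pi}{2}\}$, $\{\theta = \tfrac{\pi}{2}\}$. A clean way around this is to work instead with the smooth function $\cos^2\theta$, whose derivative we have shown vanishes identically on all of $M$ by the computation above (the identity $\nabla(\cos^2\theta\,I) = 0$ on $T^1 M$ holds without case distinction); then $\cos^2\theta$ is constant on the connected manifold $M$, hence $\theta$ is constant, and $M$ is a slant submanifold of $N$.
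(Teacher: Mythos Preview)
Your argument is correct and complete: from $\overline{\nabla}\phi=0$ and $h\equiv 0$ you get $(\nabla_X T)Y=0$ (this is exactly Lemma~\ref{lem: top01}(1) with $A=0$ and $h=0$), and since $\overline{\nabla}\xi=0$ together with total geodesy makes the tangential part $\xi_T$ parallel in $M$, the bundle $T^1M$ is indeed $\nabla$-parallel. Differentiating $T^2X=-\cos^2\theta\,X$ then yields $Y(\cos^2\theta)\,X=0$ for any nonzero $X\in T^1M$, so $\cos^2\theta$ is constant on the connected $M$. Your final remark, that working with $\cos^2\theta$ rather than $\theta$ avoids any case analysis at $\theta=0$ or $\theta=\tfrac{\pi}{2}$, is the clean way to finish.

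The paper takes a different, more global route: it fixes two points $p,q\in M$, joins them by a curve, and uses parallel transport. Total geodesy makes parallel transport in $M$ agree with that in $N$; parallelism of $\xi$ keeps the transported vector in $M_q$, and parallelism of $\phi$ shows that $\phi Z(t)$ is also parallel. The resulting isometry $\tau:T_pN\to T_qN$ preserves the splitting $TM\oplus TM^\perp$, so $\tau(TX)=TY$ and hence $\cos\theta(p)=\|TX\|/\|X\|=\|TY\|/\|Y\|=\cos\theta(q)$. Your approach is the infinitesimal version of this same parallelism idea: instead of integrating along a curve, you differentiate the identity $T^2=-\cos^2\theta\,I$ directly. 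Your version is arguably more economical and sidesteps the explicit construction of the transport map $\tau$; the paper's version is more geometric and makes the role of parallel transport transparent. Both rest on the same two facts, $\overline{\nabla}\phi=0$ and $\overline{\nabla}\xi=0$, specific to the cosymplectic setting.
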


\begin{proof}
Given any two points $p,q\in M$, we choose a $C^{\infty}$-curve $c: [0,1]\mapsto M$ such that $c(0)=p$ and $c(1)=q$. For nonzero $X\in M_p$, we take
a parallel transport $Z(t)$ along the curve $c$ in $M$ such that $Z(0)=X$ and $Z(1)=Y$. Then since $M$ is totally geodesic,
\begin{equation}\label{eq: ang010}
0 = \nabla_{c'} Z(t) = \overline{\nabla}_{c'} Z(t),
\end{equation}
where $\nabla$ and $\overline{\nabla}$ are the Levi-Civita connections of $M$ and $N$, respectively.
By the uniqueness of parallel transports, $Z(t)$ is also a parallel transport in $N$.
Since $\xi$ is parallel (See (\ref {eq: struc14})), we have
\begin{equation}\label{eq: ang011}
\frac{d}{dt} g(Z(t), \xi) = g(\overline{\nabla}_{c'} Z(t), \xi) + g(Z(t), \overline{\nabla}_{c'} \xi) = 0 \quad \text{and} \ g(Z(0), \xi) = 0
\end{equation}
so that
\begin{equation*}\label{eq: ang012}
0 = g(Z(1), \xi) =  g(Y, \xi),
\end{equation*}
which implies $Y\in M_q$.

But by (\ref {eq: struc14}),
\begin{equation*}\label{eq: ang013}
\overline{\nabla}_{c'} \phi Z(t) = (\overline{\nabla}_{c'} \phi) Z(t) + \phi \overline{\nabla}_{c'} Z(t) = 0
\end{equation*}
so that $\phi Z(t)$ becomes a parallel transport along $c$ in $N$ such that $\phi Z(0) = \phi X$ and $\phi Z(1) = \phi Y$.

Define a map $\tau : T_p N \mapsto T_q N$ by $\tau(U) = V$ for $U\in T_p N$ and $V\in T_q N$, where $W(t)$ is the parallel transport along $c$ in $N$
such that $W(0)=U$ and $W(1)=V$.
Then $\tau$ is surely isometry. It is easy to check that $\tau(T_p M) = T_p M$ and $\tau(T_p M^{\perp}) = T_p M^{\perp}$ so that
$\tau(\phi X) = \phi Y$ means $\tau(TX) = TY$.

Hence,
\begin{equation*}\label{eq: ang014}
\cos \theta(p) = \frac{||TX||}{||X||} = \frac{||TY||}{||Y||} = \cos \theta(q),
\end{equation*}
where $\theta$ is the slant function on $M$.

Therefore, the result follows.
\end{proof}

Using Proposition \ref {prop: ang02} and Theorem \ref {thm: ang01}, we get

\begin{corollary}\label{cor: ang01}
Let $M$ be a $2$-dimensional connected totally geodesic submanifold of a cosymplectic manifold $(N,\phi,\xi,\eta,g)$.
Then $M$ is a slant submanifold of $N$.
\end{corollary}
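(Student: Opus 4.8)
The plan is to obtain this statement as an immediate consequence of the two preceding results, so essentially no new computation is needed. First I would observe that since $M$ is a $2$-dimensional submanifold of the almost contact metric manifold $(N,\phi,\xi,\eta,g)$, Proposition \ref{prop: ang02} applies verbatim and tells us that $M$ is a pointwise slant submanifold of $N$. In particular $M$ carries a well-defined slant function $\theta : M \mapsto \mathbb{R}$ (equivalently, $T^2 = -\cos^2\theta\cdot I$ on $T^1 M$ by Lemma \ref{lem: ang02}), and this conclusion holds regardless of whether $\xi$ is tangent or normal to $M$ at a given point, which is exactly the flexibility built into our definition.

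Next I would feed this into Theorem \ref{thm: ang01}. By hypothesis $M$ is connected and totally geodesic, and by hypothesis the ambient manifold $N$ is cosymplectic; together with the pointwise slantness just established, all the hypotheses of Theorem \ref{thm: ang01} are met. Its conclusion is precisely that the slant function $\theta$ is constant on $M$, i.e. that $M$ is a slant submanifold of $N$, which is what we want.

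The only point requiring a moment's care — and the closest thing to an obstacle — is purely bookkeeping: one must check that the notion of pointwise slant submanifold produced by Proposition \ref{prop: ang02} is literally the same notion consumed by Theorem \ref{thm: ang01} (same $M_p$, same slant function, same characterization via Lemma \ref{lem: ang02}), so that the two results compose with no hidden compatibility requirement. Since both are stated for arbitrary submanifolds of an almost contact metric manifold using the definitions of Section \ref{slant}, this is immediate, and the corollary follows.
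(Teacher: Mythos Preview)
Your proof is correct and follows exactly the paper's approach: the corollary is obtained by combining Proposition \ref{prop: ang02} (which gives pointwise slantness from the $2$-dimensionality) with Theorem \ref{thm: ang01} (which upgrades pointwise slant to slant under the connected, totally geodesic, cosymplectic hypotheses). The paper itself offers no additional argument beyond citing these two results.
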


\begin{remark}
Corollary \ref {cor: ang01} gives us a kind of examples for slant submanifolds.
\end{remark}

Now, we need to mention A. Lotta's result \cite {L}, which is the generalization of the well-known result of K. Yano and M. Kon \cite {YK0}.

\begin{theorem}\label{thm: ang02} \cite {L}
Let $M$ be a submanifold of a contact metric manifold $N = (N,\phi,\xi,\eta,g)$.
If $\xi$ is normal to $M$, then $M$ is a anti-invariant submanifold of $N$.
\end{theorem}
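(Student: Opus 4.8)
The plan is to show that $\xi$ being normal to $M$ forces $\phi(T_pM)\subseteq T_pM^\perp$ at every point $p\in M$, which is exactly the definition of an anti-invariant submanifold. The key identity to exploit is the compatibility relation $g(\phi X,\phi Y)=g(X,Y)-\eta(X)\eta(Y)$ from \eqref{eq: struc4}, together with $\Phi=d\eta$ from \eqref{eq: struc6}, which links the tangential part of $\phi X$ to the differential of $\eta$ and hence to the shape operator in the direction $\xi$.

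First I would fix $p\in M$ and a tangent vector $X\in T_pM$, and decompose $\phi X=TX+FX$ as in \eqref{eq: tens1}. The goal is to prove $TX=0$. For this, take an arbitrary $Y\in T_pM$ and compute $g(\phi X,Y)=\Phi(Y,X)=d\eta(Y,X)$ (up to sign conventions). Since $\xi$ is normal to $M$, $\eta$ restricts to zero on $M$ — indeed $\eta(Z)=g(Z,\xi)=0$ for every $Z\in\Gamma(TM)$ by \eqref{eq: struc5}. Expanding $d\eta(Y,X)=\tfrac12\bigl(Y(\eta(X))-X(\eta(Y))-\eta([X,Y])\bigr)$ (or the unnormalized version, matching the paper's convention in \eqref{eq: struc6}) and using that $\eta$ vanishes on tangent vector fields, the first two terms die, and $\eta([X,Y])=0$ since $[X,Y]\in\Gamma(TM)$. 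Hence $g(\phi X,Y)=0$ for all $Y\in T_pM$, which says precisely $TX=0$, i.e. $\phi X\in T_pM^\perp$.

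The main obstacle is making the bracket/$d\eta$ argument clean: one needs tangential \emph{vector fields} $X,Y$ extending the given tangent vectors so that Lie brackets make sense, and one must be careful that $[X,Y]$ stays tangent to $M$, which is true since $M$ is a submanifold and $X,Y\in\Gamma(TM)$. An alternative, perhaps cleaner route that avoids brackets altogether: differentiate the relation $\eta(Y)=0$ along $M$ using the ambient Levi-Civita connection. For $X,Y\in\Gamma(TM)$ we get $0=X(g(Y,\xi))=g(\overline\nabla_XY,\xi)+g(Y,\overline\nabla_X\xi)$. For a contact metric manifold, $\overline\nabla_X\xi=-\phi X-\phi\bar hX$ (property (ii) recalled in the paper), and $g(\overline\nabla_XY,\xi)=g(h(X,Y),\xi)$ by the Gauss formula since the tangential part is orthogonal to $\xi$. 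Symmetrizing in $X,Y$ and using that $h$ is symmetric while $\phi+\phi\bar h$ need not be, one extracts $g(Y,\phi X)+g(X,\phi Y)=0$; but $\Phi$ is already antisymmetric, so this is automatic and instead the symmetric part gives $2g(h(X,Y),\xi)=g(Y,(\phi+\phi\bar h)X)+g(X,(\phi+\phi\bar h)Y)$, whose skew part must vanish.

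Given the delicacy above, I expect the slickest proof is simply: $g(\phi X,Y)=\Phi(Y,X)=2\,d\eta(Y,X)$ (paper's normalization) and $d\eta$ restricted to $M$ is $d(\eta|_M)=d(0)=0$ since $\eta|_M\equiv0$; therefore $g(\phi X,Y)=0$ for all $X,Y\in\Gamma(TM)$, so $TX\equiv 0$ and $M$ is anti-invariant. This reduces everything to the observation that the restriction of $\eta$ to $M$ vanishes identically when $\xi\perp M$, and that pullback commutes with $d$. That is the approach I would write up, relegating the sign/normalization bookkeeping between $\Phi$, $d\eta$, and \eqref{eq: struc6} to a one-line check.
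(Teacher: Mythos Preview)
Your final approach is correct and complete: since $\xi\perp M$, the pullback $i^*\eta$ vanishes identically on $M$, hence $i^*\Phi=i^*d\eta=d(i^*\eta)=0$, so $g(X,\phi Y)=\Phi(X,Y)=0$ for all $X,Y\in\Gamma(TM)$, and $M$ is anti-invariant.

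This is genuinely different from, and more elementary than, the argument the paper attributes to Lotta. The paper does not supply its own proof; it only records that Lotta's proof rests on the three properties of the tensor $\bar h=\tfrac12 L_\xi\phi$ listed in Section~\ref{contact} (symmetry of $\bar h$, the formula $\overline\nabla_X\xi=-\phi X-\phi\bar hX$, and $\bar h\phi=-\phi\bar h$). Your $d\eta$ argument bypasses $\bar h$ entirely and uses nothing beyond the defining relation $\Phi=d\eta$ of a contact metric manifold together with naturality of the exterior derivative. What Lotta's route buys is that it sits inside a framework that also handles the finer slant-angle statements in contact metric geometry; what your route buys is that it is a two-line proof requiring no connection computations.

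Incidentally, the covariant-derivative approach you sketched and then abandoned \emph{does} go through, and is essentially Lotta's argument: from $g(h(X,Y),\xi)=g(Y,\phi X)+g(Y,\phi\bar hX)$ and symmetry of $h$ one gets $2g(X,\phi Y)=g(Y,\phi\bar hX)-g(X,\phi\bar hY)$. The right-hand side vanishes because $\phi\bar h$ is \emph{symmetric}: $g(\phi\bar hX,Y)=-g(\bar hX,\phi Y)=-g(X,\bar h\phi Y)=g(X,\phi\bar hY)$, using that $\bar h$ is symmetric and anti-commutes with $\phi$. That single observation was the missing piece in your second paragraph.
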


\begin{remark}
\begin{enumerate}
\item As we know, Theorem \ref {thm: ang02} is very strong and it implies that there do not exist submanifolds $M$ with $\xi\in \Gamma(TM^{\perp})$
in a contact metric manifold $(N,\phi,\xi,\eta,g)$ such that either $\{ X,\phi X \} \subset M_p$ for some nonzero $X\in M_p$, $p\in M$
or $2\dim M > \dim N + 1$.

\item  If $N$ is either cosymplectic or Kenmotsu,
then Theorem \ref {thm: ang02} is not true (See Example \ref {ex sl01} and Example \ref {ex sl02}) and
we easily check that the argument of the proof of Theorem \ref {thm: ang02} at \cite {L} does not give any information anymore.

\item In the view point of (1) and (2), we may think that Sasakian manifolds are somewhat different from cosymplectic manifolds
and Kenmotsu manifolds (See section 8, section 9, section 10).
\end{enumerate}
\end{remark}

In the same way to Proposition 2.1 of \cite {CG}, we can obtain

\begin{proposition}
Let $M$ be a submanifold of an almost contact metric manifold $(N,\phi,\xi,\eta,g)$.
Then $M$ is a pointwise slant submanifold of $N$ if and only if
\begin{equation}\label{eq: prop01}
g(TX, TY) = 0 \ \text{whenever} \ g(X, Y) = 0 \ \text{for} \ X,Y\in M_p, p\in M.
\end{equation}
\end{proposition}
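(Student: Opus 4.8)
The plan is to mimic the proof of the analogous characterization in the almost Hermitian case (Proposition 2.1 of \cite{CG}), adapting it to the contact setting where everything is restricted to the screen-type subspace $M_p = \{X \in T_pM \mid g(X,\xi(p)) = 0\}$. The forward implication is immediate: if $M$ is pointwise slant, then by Lemma \ref{lem: ang02} we have $T^2 = -\cos^2\theta \cdot I$ on $T^1 M$, hence $g(TX,TY) = -g(T^2 X, Y) = \cos^2\theta(p)\, g(X,Y)$ for all $X,Y \in M_p$; in particular $g(X,Y) = 0$ forces $g(TX,TY) = 0$. So the substance is the converse.

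For the converse, I would assume (\ref{eq: prop01}) and aim to produce the pointwise-slant condition via Lemma \ref{lem: ang02}, i.e. show $T^2 = -\cos^2\theta\cdot I$ on $T^1M$ for some function $\theta$. Fix $p \in M$. First I would check that $T$ maps $M_p$ into itself: for $X \in M_p$, $g(TX,\xi(p)) = g(\phi X, \xi(p)) - g(FX,\xi(p))$; using $g(\phi X,\xi) = -g(X,\phi\xi) = 0$ by (\ref{eq: struc3}) and the fact that $FX$ is normal while the $\xi$-component of interest is tangential, one sees $TX \in M_p$ (care is needed here since $\xi$ need not be tangent to $M$, but the only tangential information we extract is that $TX$ stays orthogonal to the tangential part of $\xi$, and when $\xi \notin \Gamma(TM)$ the condition $g(TX,\xi(p))=0$ is automatic anyway). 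So $T$ restricts to a self-adjoint endomorphism of $M_p$. The condition (\ref{eq: prop01}) says exactly that $T$ preserves orthogonality on $M_p$; I would then invoke the standard linear-algebra fact that a self-adjoint operator on a finite-dimensional inner product space which sends orthogonal pairs to orthogonal pairs is a scalar multiple of an isometry, i.e. $g(TX,TY) = \lambda(p)\, g(X,Y)$ for all $X,Y \in M_p$ with $\lambda(p) \geq 0$. (This is proved by diagonalizing $T|_{M_p}$: distinct eigenvalues would give orthogonal eigenvectors mapped to non-orthogonal images unless one eigenvalue vanishes, and a short case analysis pins down $|{\rm eigenvalue}|$ to a single value, allowing $0$ only in degenerate low-dimensional subcases which still fit the scalar form.) Then $g(TX,TY) = -g(T^2X,Y) = \lambda(p) g(X,Y)$ gives $(T^2 + \lambda(p)I)X = 0$ on $M_p$ by self-adjointness, so setting $\cos^2\theta(p) := \lambda(p)$ (note $\lambda(p) \in [0,1]$ since $\|TX\| \leq \|\phi X\| = \|X\|$ for $X \in M_p$) and applying Lemma \ref{lem: ang02} finishes it.

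I expect the main obstacle to be the linear-algebra step extracting a \emph{single} scalar $\lambda(p)$ from the orthogonality-preservation hypothesis, together with the measurability/regularity of $p \mapsto \lambda(p)$; the latter follows because $\lambda(p) = g(TX,TX)/g(X,X)$ for any local nonvanishing section $X$ of $T^1M$, which is smooth. A secondary subtlety is handling points where $\dim M_p$ is small or where $\xi(p)$ is tangent versus normal, but these are exactly the situations already dispatched cleanly in Proposition \ref{prop: ang02} and Lemma \ref{lem: ang02}, so I would lean on the same reasoning rather than redo it.
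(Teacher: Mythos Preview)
Your overall strategy matches the paper's: the paper gives no proof here and simply says the result is obtained ``in the same way to Proposition 2.1 of \cite{CG}'', which is exactly the route you are adapting to the contact setting via Lemma~\ref{lem: ang02}. So at the level of approach there is nothing to compare.

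There is, however, a genuine slip in your linear-algebra step. The operator $T$ is \emph{skew}-adjoint on $T_pM$, not self-adjoint: for $X,Y\in\Gamma(TM)$ one has $g(TX,Y)=g(\phi X,Y)=-g(X,\phi Y)=-g(X,TY)$. Consequently $T|_{M_p}$ has no nonzero real eigenvectors in general, and the diagonalization argument you sketch (``distinct eigenvalues would give orthogonal eigenvectors mapped to non-orthogonal images'') does not go through as written. The conclusion you want is still true, but the clean way to reach it is either (i) apply your eigenvalue argument to the genuinely self-adjoint, positive-semidefinite operator $-T^2=T^{*}T$, or (ii) bypass spectral theory entirely: for orthonormal $X,Y\in M_p$ the vectors $X+Y$ and $X-Y$ lie in $M_p$ and are orthogonal, so the hypothesis gives $0=g(T(X+Y),T(X-Y))=\|TX\|^2-\|TY\|^2$; hence $\|TX\|^2/\|X\|^2=:\lambda(p)$ is independent of $X$, and polarization yields $g(TX,TY)=\lambda(p)\,g(X,Y)$ for all $X,Y\in M_p$. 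From there your use of Lemma~\ref{lem: ang02} is correct, and the smoothness of $\lambda$ follows as you say. (The side discussion about whether $T(M_p)\subset M_p$ is not actually needed for route (ii), so you can drop it and avoid the delicate case-splitting on the position of $\xi$.)
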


Considering slant functions as conformal invariant, we easily derive

\begin{proposition}
Let $M$ be a pointwise slant submanifold of an almost contact metric manifold $(N,\phi,\xi,\eta,g)$ with the slant function
$\theta : M\mapsto \mathbb{R}$.
Then for any given $C^{\infty}$-function $f : N\mapsto \mathbb{R}$, $M$ is also a pointwise slant submanifold of
an almost contact metric manifold $(N,\phi,e^{-f}\xi,e^f\eta,e^{2f}g)$ with the same slant function $\theta$.
\end{proposition}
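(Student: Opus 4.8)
The plan is to transfer the characterization in Lemma~\ref{lem: ang02} along the conformal change of the almost contact metric structure. First I would record how the new structure $(\phi, \tilde\xi, \tilde\eta, \tilde g)$ with $\tilde\xi = e^{-f}\xi$, $\tilde\eta = e^f\eta$, $\tilde g = e^{2f}g$ changes the data entering the definition. Checking that $(\phi,\tilde\xi,\tilde\eta,\tilde g)$ is again an almost contact metric structure is routine: $\phi^2 = -I + \eta\otimes\xi = -I + \tilde\eta\otimes\tilde\xi$ since $\tilde\eta\otimes\tilde\xi = e^f e^{-f}\,\eta\otimes\xi = \eta\otimes\xi$; $\tilde\eta(\tilde\xi) = e^f e^{-f}\eta(\xi) = 1$; and the compatibility $\tilde g(\phi X, \phi Y) = e^{2f} g(\phi X,\phi Y) = e^{2f}(g(X,Y) - \eta(X)\eta(Y)) = \tilde g(X,Y) - \tilde\eta(X)\tilde\eta(Y)$.

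Next I would identify the tangent distribution relevant to the definition. For a point $p\in M$, the space $\widetilde{M_p} = \{X\in T_pM \mid \tilde g(X,\tilde\xi(p)) = 0\}$ equals $\{X \mid e^{2f}g(X, e^{-f}\xi(p)) = 0\} = \{X \mid g(X,\xi(p)) = 0\} = M_p$, so the ambient normal bundle $TM^\perp$ (with respect to $\tilde g$, but this is the same orthogonal complement up to the conformal factor, hence the same subspace) and the decomposition $\phi X = TX + FX$ into tangential and normal parts are unchanged as linear maps; only the metric on those spaces is rescaled by $e^{2f}$. In particular the endomorphism $T$ of $T^1M$ is literally the same operator in both structures.

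Then the conclusion is immediate from Lemma~\ref{lem: ang02}: $M$ is pointwise slant for the original structure iff $T^2 = -\cos^2\theta\cdot I$ on $T^1 M$ for some function $\theta: M\to\mathbb R$; since $T$ and $T^1 M$ are unchanged, exactly the same identity with exactly the same $\theta$ characterizes pointwise slantness for $(N,\phi,\tilde\xi,\tilde\eta,\tilde g)$. Hence $M$ is pointwise slant in the new structure with the same slant function $\theta$.

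I do not expect a serious obstacle here; the only points requiring a line of care are verifying that $(\phi,\tilde\xi,\tilde\eta,\tilde g)$ satisfies the axioms (done above) and observing that $M_p$, the tangential/normal splitting, and therefore the operator $T$ are genuinely invariant under a conformal change of $g$ — the rescaling affects lengths but not orthogonality or the subspaces involved. Once that is in place, invoking Lemma~\ref{lem: ang02} finishes the argument with no computation.
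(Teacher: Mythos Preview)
Your argument is correct and matches the paper's approach: the paper simply notes that the slant function is a conformal invariant and states the proposition without further proof, and your verification that $M_p$, the tangential/normal splitting, and hence $T$ are unchanged under the conformal rescaling is exactly what makes this invariance precise. Invoking Lemma~\ref{lem: ang02} is a clean way to finish, though one could equally well observe directly that the defining angle $\cos\theta = \|TX\|/\|X\|$ is a ratio of norms and hence unaffected by $g \mapsto e^{2f}g$.
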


\begin{theorem}
Let $M$ be a slant submanifold of an almost contact metric manifold $N = (N,\phi,\xi,\eta,g)$  with the slant angle $\theta$.
Assume that $N$ is one of the following three manifolds: cosymplectic, Sasakian, Kenmotsu. Then we have
\begin{equation}\label{eq: rel01}
A_{FX} TX = A_{FTX} X \quad \text{for} \ X\in \Gamma(T^1 M).
\end{equation}
\end{theorem}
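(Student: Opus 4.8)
The plan is to prove the identity by testing it against an arbitrary vector $Y\in\Gamma(TM)$ and showing both sides have the same inner product with $Y$. Using the relation \eqref{eq: shape} between the shape operator and the second fundamental form, $\langle A_{FX}TX,Y\rangle=\langle h(TX,Y),FX\rangle$ and $\langle A_{FTX}X,Y\rangle=\langle h(X,Y),FTX\rangle$. So it suffices to show
\begin{equation*}
\langle h(TX,Y),FX\rangle=\langle h(X,Y),FTX\rangle\qquad\text{for }X\in\Gamma(T^1M),\ Y\in\Gamma(TM).
\end{equation*}
The natural way to access these terms is to start from $\langle\overline{\nabla}_Y\phi X,\phi Z\rangle$ type expressions, or more directly from $g((\overline{\nabla}_Y\phi)X,Z)$, and expand using the Gauss/Weingarten formulas \eqref{eq: gauss}, \eqref{eq: weing} together with the decompositions \eqref{eq: tens1}, \eqref{eq: tens2}.

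First I would record the covariant-derivative formulas for $T$ and $F$. Writing $\overline{\nabla}_Y(\phi X)=\overline{\nabla}_Y(TX+FX)$ and expanding the left side as $(\overline{\nabla}_Y\phi)X+\phi(\overline{\nabla}_YX)=(\overline{\nabla}_Y\phi)X+\phi(\nabla_YX+h(X,Y))=(\overline{\nabla}_Y\phi)X+T\nabla_YX+F\nabla_YX+t\,h(X,Y)+f\,h(X,Y)$, and the right side as $\nabla_YTX+h(Y,TX)-A_{FX}Y+D_YFX$, one separates tangential and normal parts. The key consequence is a formula for the normal component of $h(Y,TX)$ in terms of $F\nabla_YX$, $f\,h(X,Y)$, $D_YFX$ and the normal part of $(\overline{\nabla}_Y\phi)X$. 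In each of the three cases (cosymplectic, Sasakian, Kenmotsu) we have $(\overline{\nabla}_X\phi)Y$ given explicitly by \eqref{eq: struc14}, \eqref{eq: struc10}, \eqref{eq: struc12} respectively, and in every case $(\overline{\nabla}_Y\phi)X$ is a combination of $\xi$ and $\phi X$-type terms; since $X\in M_p$ is orthogonal to $\xi$, when we eventually pair against $FX$ (a normal vector with no $\xi$-component issues once we use $\eta(X)=0$) these extra terms should contribute symmetrically or vanish.

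Next I would use Lemma \ref{lem: ang02}, i.e. $T^2X=-\cos^2\theta\,X$ for $X\in T^1M$, which is what lets the two sides match: differentiating or manipulating $\phi^2=-I+\eta\otimes\xi$ gives $tFX=-X-T^2X=-\sin^2\theta\,X$ for $X\in T^1M$ (using $\eta(X)=0$), and $fFX=-FTX-FTX$... more precisely, applying $\phi$ to \eqref{eq: tens1} and splitting yields $T^2X+tFX=-X$ and $FTX+fFX=0$, the latter giving $fFX=-FTX$. The identity $fFX=-FTX$ together with the formula from the previous step for $h(Y,TX)$ paired with $FX$, compared against the analogous computation of $h(X,Y)$ paired with $FTX=-fFX$, should produce exactly the claimed equality after the $\xi$- and $\phi X$-type correction terms cancel. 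Since $\theta$ is constant (slant, not merely pointwise slant), derivatives of $\cos^2\theta$ drop out, which is why the slant hypothesis is used.

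The main obstacle I anticipate is bookkeeping the correction terms coming from $(\overline{\nabla}_Y\phi)X$ in the Sasakian and Kenmotsu cases: in the cosymplectic case $\overline{\nabla}\phi=0$ kills everything and the identity is essentially immediate from $fFX=-FTX$ and symmetry of $h$, but in the other two cases one must verify that the terms $g(X,Y)\xi-\eta(Y)X$ (Sasakian) or $g(\phi X,Y)\xi-\eta(Y)\phi X$ (Kenmotsu), after taking normal components and pairing with $FX$ versus $FTX$, either vanish (using $X\perp\xi$, and that $\xi$ may be tangent or normal) or appear identically on both sides. A clean way to organize this is to write $\langle h(TX,Y),FX\rangle-\langle h(X,Y),FTX\rangle$ as a single expression, substitute, and check it reduces to $0$; I expect the anti-symmetry of $\phi$ and the identity $\langle FU,Z\rangle=\langle\phi U,Z\rangle$ for $U$ tangent and $Z$ normal to do most of the collapsing, with \eqref{eq: ang07} available if a norm estimate is needed, though I suspect no quantitative input beyond $T^2=-\cos^2\theta\,I$ is required.
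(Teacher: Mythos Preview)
Your proposal is correct and follows essentially the same route as the paper: expand $\overline{\nabla}_Y(\phi X)$ in two ways via Gauss--Weingarten and the structure equation for $(\overline{\nabla}_Y\phi)X$, then extract the relevant component. The only cosmetic difference is that the paper pairs the \emph{tangential} parts with $X^*:=\sec\theta\,TX$ (taking $X$ unit so that $g(\nabla_YX^*,X^*)=g(T\nabla_YX,X^*)=0$ drop out automatically), whereas you pair the \emph{normal} parts with $FX$ and then use $fFX=-FTX$ together with the constancy of $\theta$ to kill the $D_YFX$ versus $F\nabla_YX$ terms; these are dual bookkeeping choices for the same computation, and your anticipation that the $(\overline{\nabla}_Y\phi)X$ correction terms vanish (via $\eta(X)=0$, $\eta(FX)=0$, $\eta(X^*)=0$) is exactly what happens in all three cases.
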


\begin{proof}
We will only give its proof when $N$ is Sasakian. For the other cases, we can show them in a similar way.
If $\theta = \frac{\pi}{2}$, then done!
Assume that $0\leq \theta < \frac{\pi}{2}$.
Given a unit vector field $X\in \Gamma(T^1 M)$, we have
\begin{equation}\label{eq: rel02}
TX = \cos \theta \cdot X^*
\end{equation}
for some unit vector field $X^*\in \Gamma(T^1 M)$ with $g(X, X^*) = 0$.
Then for any $Y\in \Gamma(TM)$, by using (\ref {eq: gauss}), (\ref {eq: weing}), and (\ref {eq: struc10}), we obtain
\begin{eqnarray}
\overline{\nabla}_Y (\phi X)
& = & \overline{\nabla}_Y (\cos \theta \cdot X^*) + \overline{\nabla}_Y FX    \label{eq: rel03}    \\
& = &  \cos \theta \cdot \nabla_Y X^* + \cos \theta h(Y, X^*) - A_{FX} Y + D_Y FX   \nonumber
\end{eqnarray}
and
\begin{eqnarray}
\overline{\nabla}_Y (\phi X)
& = &  (\overline{\nabla}_Y \phi) X + \phi\overline{\nabla}_Y X      \label{eq: rel04}           \\
& = & g(Y, X) \xi - \eta(X) Y + T\nabla_Y X + F\nabla_Y X       \nonumber   \\
&   & + th(Y, X) + fh(Y, X)      \nonumber   \\
& = &  g(Y, X) \xi + T\nabla_Y X + F\nabla_Y X + th(Y, X) + fh(Y, X).    \nonumber
\end{eqnarray}
Thus, by taking the inner product of right hand sides of (\ref {eq: rel03}) and (\ref {eq: rel04}) with $X^*$, we derive
\begin{equation*}\label{eq: rel05}
g(-A_{FX} Y, X^*) = g(th(Y, X), X^*),
\end{equation*}
which gives
\begin{equation*}\label{eq: rel05}
g(A_{FX} X^*, Y) = g(A_{FX^*} X, Y).
\end{equation*}
Therefore, the result follows.
\end{proof}

\section{Topological properties}\label{topol}

In this section we investigate the topological properties of pointwise slant submanifolds of a cosymplectic manifold.
A pointwise slant submanifold $M$ of an almost contact metric manifold $(N,\phi,\xi,\eta,g)$ is said to be {\em proper} if the slant function $\theta$
of $M$ in $N$ is given by $\theta : M \mapsto [0,\frac{\pi}{2})$.

Let $M$ be a pointwise slant submanifold of an almost contact metric manifold $(N,\phi,\xi,\eta,g)$.
Given $X,Y\in \Gamma(TM)$, we define
\begin{eqnarray}
&  & (\nabla_X T)Y := \nabla_X (TY) - T\nabla_X Y,   \label{eq: top01}    \\
&  &  (D_X F)Y := D_X (FY) - F\nabla_X Y.    \label{eq: top02}
\end{eqnarray}
We call the tensors $T$ and $F$ {\em parallel} if $\nabla T = 0$ and $\nabla F = 0$, respectively.
Then in a similar way to Lemma 3.8 of \cite {P4}, we easily obtain

\begin{lemma}\label{lem: top01}
Let $M$ be a pointwise slant submanifold of a cosymplectic manifold $(N,\phi,\xi,\eta,g)$.
Then we get
\begin{enumerate}
\item
\begin{eqnarray}
&  &(\nabla_X T) Y = A_{FY} X + th(X,Y),  \label{eq: top03}   \\
&  &(D_X F) Y = -h(X,TY) + fh(X,Y)   \label{eq: top04}
\end{eqnarray}
for $X,Y\in \Gamma(TM)$.

\item
\begin{eqnarray}
&  &-TA_Z X + tD_X Z = \nabla_X (tZ) - A_{fZ} X,  \label{eq: top05}  \\
&  &-FA_Z X + fD_X Z = h(X,tZ) + D_X (fZ)   \label{eq: top06}
\end{eqnarray}
for $X\in \Gamma(TM)$ and $Z\in \Gamma(TM^{\perp})$.
\end{enumerate}
\end{lemma}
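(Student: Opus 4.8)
The plan is to compute both sides of each identity by starting from the defining relations \eqref{eq: tens1} and \eqref{eq: tens2} and applying the Gauss--Weingarten formulas together with the cosymplectic condition $\overline{\nabla}\phi = 0$ from \eqref{eq: struc14}. Concretely, for part (1) I would start from $\phi Y = TY + FY$ and differentiate covariantly in the ambient connection: since $\overline{\nabla}_X(\phi Y) = \phi\,\overline{\nabla}_X Y$ (using $\overline{\nabla}\phi = 0$), the left side becomes $\phi(\nabla_X Y + h(X,Y)) = T\nabla_X Y + F\nabla_X Y + th(X,Y) + fh(X,Y)$ after splitting the tangential and normal parts via \eqref{eq: tens1} and \eqref{eq: tens2}. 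The right side, expanding $\overline{\nabla}_X(TY) = \nabla_X(TY) + h(X,TY)$ by \eqref{eq: gauss} and $\overline{\nabla}_X(FY) = -A_{FY}X + D_X(FY)$ by \eqref{eq: weing}, becomes $\nabla_X(TY) + h(X,TY) - A_{FY}X + D_X(FY)$. Equating the two expressions and then separating the tangential components gives $\nabla_X(TY) - A_{FY}X = T\nabla_X Y + th(X,Y)$, i.e. $(\nabla_X T)Y = A_{FY}X + th(X,Y)$, which is \eqref{eq: top03}; separating the normal components gives $h(X,TY) + D_X(FY) = F\nabla_X Y + fh(X,Y)$, i.e. $(D_X F)Y = -h(X,TY) + fh(X,Y)$, which is \eqref{eq: top04}.

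For part (2) the argument is entirely parallel, but now I would start from $\phi Z = tZ + fZ$ for a normal vector field $Z$ and differentiate: $\overline{\nabla}_X(\phi Z) = \phi\,\overline{\nabla}_X Z = \phi(-A_Z X + D_X Z)$ by \eqref{eq: weing}. Splitting the right-hand side using \eqref{eq: tens1} on $-A_Z X \in \Gamma(TM)$ and \eqref{eq: tens2} on $D_X Z \in \Gamma(TM^{\perp})$ yields $-TA_Z X - FA_Z X + tD_X Z + fD_X Z$. On the other hand $\overline{\nabla}_X(tZ + fZ) = \nabla_X(tZ) + h(X,tZ) - A_{fZ}X + D_X(fZ)$ by \eqref{eq: gauss} and \eqref{eq: weing}. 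Equating and separating tangential parts gives $-TA_Z X + tD_X Z = \nabla_X(tZ) - A_{fZ}X$, which is \eqref{eq: top05}; separating normal parts gives $-FA_Z X + fD_X Z = h(X,tZ) + D_X(fZ)$, which is \eqref{eq: top06}.

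There is essentially no hard part here: the entire proof is a bookkeeping exercise in splitting vectors into tangential and normal components, and the only structural input is that the cosymplectic hypothesis forces $\overline{\nabla}\phi = 0$ so that $\overline{\nabla}_X(\phi \,\cdot\,) = \phi\,\overline{\nabla}_X(\,\cdot\,)$ with no extra $\xi$-terms appearing (contrast the Sasakian and Kenmotsu cases, where $(\overline{\nabla}_X\phi)Y$ contributes the terms in \eqref{eq: struc10} or \eqref{eq: struc12}). The one point requiring a moment's care is making sure the decompositions are applied to the correct pieces --- $TY$ and $tZ$ are tangent so they are differentiated with the Gauss formula, while $FY$ and $fZ$ are normal so they are differentiated with the Weingarten formula --- and then reading off the tangential and normal components of the resulting identity separately. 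Since the statement already asserts this follows "in a similar way to Lemma 3.8 of \cite{P4}," I would simply carry out the computation above and remark that the verification is routine; for brevity one may present the derivation of \eqref{eq: top03}--\eqref{eq: top04} in full and note that \eqref{eq: top05}--\eqref{eq: top06} follow by the identical method applied to \eqref{eq: tens2}.
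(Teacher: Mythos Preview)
Your proposal is correct and is precisely the standard computation the paper has in mind: the paper itself omits the proof entirely, stating only that the lemma follows ``in a similar way to Lemma 3.8 of \cite{P4},'' and your derivation---differentiating $\phi Y = TY + FY$ (resp.\ $\phi Z = tZ + fZ$) using $\overline{\nabla}\phi = 0$, applying Gauss--Weingarten, and separating tangential and normal components---is exactly that routine verification.
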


Let $M$ be a proper pointwise slant submanifold of a cosymplectic manifold $(N,\phi,\xi,\eta,g)$.

Define
\begin{equation}\label{eq: top07}
\Omega(X, Y) := g(X, TY) \quad \text{for} \ X,Y\in \Gamma(TM).
\end{equation}
Then $\Omega$ is a $2$-form on $M$, which is non-degenerate on $T^1 M$ (\cite {CG}, \cite {P4}).

\begin{theorem}\label{thm: top01}
Let $M$ be a proper pointwise slant submanifold of a cosymplectic manifold $(N,\phi,\xi,\eta,g)$.
Then the 2-form $\Omega$ is closed.
\end{theorem}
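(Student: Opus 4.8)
The plan is to compute $d\Omega(X,Y,Z)$ directly from the standard formula for the exterior derivative of a $2$-form,
\begin{equation}\label{eq: dOmega}
d\Omega(X,Y,Z) = X\Omega(Y,Z) - Y\Omega(X,Z) + Z\Omega(X,Y) - \Omega([X,Y],Z) + \Omega([X,Z],Y) - \Omega([Y,Z],X),
\end{equation}
for $X,Y,Z\in\Gamma(TM)$, and show the right-hand side vanishes. Since $\Omega(A,B) = g(A,TB)$ and $\nabla$ is metric, each term $X\Omega(Y,Z) = Xg(Y,TZ)$ can be rewritten using $\nabla_X$; combining the bracket terms with $[X,Y]=\nabla_XY-\nabla_YX$ etc., the whole expression reorganizes into a sum involving the covariant derivative $(\nabla_X T)$. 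The cleanest route is to establish first the auxiliary identity
\begin{equation}\label{eq: dOmega2}
d\Omega(X,Y,Z) = g((\nabla_X T)Y, Z) + g((\nabla_Y T)Z, X) + g((\nabla_Z T)X, Y),
\end{equation}
using only that $T$ is skew-symmetric (which follows from $\phi$ being skew and $F$ landing in the normal bundle) and that $\nabla$ is torsion-free and metric. Actually one gets a cyclic sum of three such terms after carefully cancelling; this is a routine bookkeeping step.

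Next I would substitute the expression for $(\nabla_X T)Y$ supplied by Lemma \ref{lem: top01}(1), namely $(\nabla_X T)Y = A_{FY}X + th(X,Y)$. This turns \eqref{eq: dOmega2} into
\begin{equation}\label{eq: dOmega3}
d\Omega(X,Y,Z) = g(A_{FY}X + th(X,Y), Z) + g(A_{FZ}Y + th(Y,Z), X) + g(A_{FX}Z + th(Z,X), Y).
\end{equation}
Now I would handle the two kinds of terms separately. For the shape-operator terms, use the symmetry \eqref{eq: shape}: $g(A_{FY}X,Z) = g(h(X,Z),FY)$, and likewise for the others, so these three contributions become $g(h(X,Z),FY) + g(h(Y,X),FZ) + g(h(Z,Y),FX)$. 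For the $th(\cdot,\cdot)$ terms, note $g(th(X,Y),Z) = g(\phi\, h(X,Y) - fh(X,Y), Z) = g(\phi\, h(X,Y), Z)$ since $fh(X,Y)$ is normal and $Z$ tangent; and $g(\phi\, h(X,Y),Z) = -g(h(X,Y),\phi Z) = -g(h(X,Y), TZ + FZ) = -g(h(X,Y),FZ)$ because $h(X,Y)$ is normal and $TZ$ is tangent. So each $th$-term is exactly the negative of one of the shape-operator terms, and after matching them up by the symmetry $h(X,Y)=h(Y,X)$, the six terms cancel in pairs, giving $d\Omega = 0$.

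The main obstacle is purely organizational rather than conceptual: getting the identity \eqref{eq: dOmega2} with the correct signs and cyclic structure, since the torsion-free and metric-compatibility manipulations in \eqref{eq: dOmega} must be done carefully, and one must be sure the skew-symmetry of $T$ is used correctly (so that, e.g., $Xg(Y,TZ)$ expands without leftover terms). Once \eqref{eq: dOmega2} is in hand, everything else is a short calculation driven by Lemma \ref{lem: top01}, the shape-operator relation \eqref{eq: shape}, and the decompositions \eqref{eq: tens1}--\eqref{eq: tens2}. I would also remark that this argument uses cosymplecticity only through Lemma \ref{lem: top01}, i.e. through $\overline{\nabla}\phi = 0$; no curvature computation is needed.
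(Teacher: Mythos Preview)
Your proposal is correct and follows essentially the same route as the paper: expand $d\Omega$ via the Koszul-type formula, reorganize into terms involving $(\nabla_{\,\cdot}\,T)$, substitute Lemma~\ref{lem: top01}(1), and cancel using \eqref{eq: shape} together with the symmetry of $h$. The paper's intermediate expression is $g(Y,(\nabla_X T)Z) - g(X,(\nabla_Y T)Z) + g(X,(\nabla_Z T)Y)$, which differs from your cyclic sum \eqref{eq: dOmega2} only by an overall sign (and the conventional factor $3$), via the skew-symmetry of $\nabla_{\,\cdot}\,T$; this is purely cosmetic and does not affect the argument.
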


\begin{proof}
Given $X,Y,Z\in \Gamma(TM)$, we get
\begin{align*}
3d\Omega(X,Y,Z) &= X\Omega (Y,Z) - Y\Omega (X,Z) + Z\Omega (X,Y)      \\
                  &- \Omega ([X,Y],Z) + \Omega ([X,Z],Y) - \Omega ([Y,Z],X)
\end{align*}
so that
\begin{align*}
3d\Omega (X,Y,Z) &= g(\nabla_X Y,TZ) + g(Y,\nabla_X TZ) - g(\nabla_Y X,TZ)      \\
                  &- g(X,\nabla_Y TZ) + g(\nabla_Z X,TY) + g(X,\nabla_Z TY)  \\
                  &- g(\nabla_X Y - \nabla_Y X,TZ) + g(\nabla_X Z - \nabla_Z X,TY) - g(\nabla_Y Z - \nabla_Z Y,TX)      \\
                  &= g(Y,(\nabla_X T)Z) - g(X,(\nabla_Y T)Z) + g(X,(\nabla_Z T)Y).      \\
\end{align*}
Using Lemma \ref {lem: top01} and (\ref {eq: shape}), we obtain
\begin{align*}
3d\Omega (X,Y,Z) &= g(Y, A_{FZ} X + th(X, Z)) - g(X, A_{FZ} Y + th(Y, Z))     \\
                  &+ g(X, A_{FY} Z + th(Z, Y))  \\
                  &= g(Y,th(X,Z)) - g(Z, th(Y,X))      \\
                  &- g(X,th(Y,Z)) + g(Z, th(X,Y))      \\
                  &+ g(X,th(Z,Y)) - g(Y, th(X,Z))      \\
                  &= 0.
\end{align*}
Therefore, the result follows.
\end{proof}

Consider the restriction of the $1$-form $\eta$ to $M$. We also denote it by $\eta$.

Denote by $[\Omega]$ and $[\eta]$ the de Rham cohomology classes of $2$-form $\Omega$ and $1$-form $\eta$ on $M$, respectively.
As we know, a cosymplectic manifold is locally a Riemannian product of a K\"{a}hler manifold and an interval and the cosymplectic condition
(i.e., $\overline{\nabla}\phi = 0$) naturally corresponds to the K\"{a}hler condition ($\overline{\nabla}J = 0$) (See \cite {FIP}).

Hence, in a similar way to Theorem 5.1 of \cite {CG} and to Theorem 5.2 of \cite {P4}, by using Theorem \ref {thm: top01}, we obtain

\begin{theorem}\label{thm: top02}
Let $M$ be a $2m$-dimensional compact proper pointwise slant submanifold of a $(2n+1)$-dimensional cosymplectic manifold $(N,\phi,\xi,\eta,g)$
such that $\xi$ is normal to $M$.

Then $[\Omega]\in H^2(M, \mathbb{R})$ is non-vanishing.
\end{theorem}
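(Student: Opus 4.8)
The plan is to mimic the classical argument (as in the K\"{a}hler case of Chen--Garay and the almost quaternionic case of the author's earlier work) showing that a compact proper pointwise slant submanifold carries a non-trivial degree-$2$ de Rham class coming from the fundamental-type $2$-form $\Omega$. First I would observe that since $\xi$ is normal to $M$, the whole tangent space at each point lies in $M_p$, i.e.\ $T^1M = TM$, so $\Omega$ is non-degenerate on all of $TM$ and, by Theorem~\ref{thm: top01}, closed. Because $M$ is proper, the slant function satisfies $\theta(p)\in[0,\tfrac{\pi}{2})$, hence $\cos\theta$ never vanishes; combined with $T^2=-\cos^2\theta\cdot I$ from Lemma~\ref{lem: ang02}, this shows $T$ is a nonsingular skew-symmetric endomorphism of $TM$, forcing $m$ to be even (say $\dim M = 2m$) and, more importantly, allowing one to build a global volume form out of $\Omega$.

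The key step is to show $[\Omega]\neq 0$ in $H^2(M,\mathbb{R})$, and for this I would exhibit that $[\Omega]^m = [\Omega\wedge\cdots\wedge\Omega]$ is a non-zero class, which suffices since $M$ is compact and $[\Omega]^m\in H^{2m}(M,\mathbb{R})\cong\mathbb{R}$ is detected by integration over $M$. Concretely, I would argue that $\Omega^m$ is a nowhere-vanishing $2m$-form on $M$: at each point $p$, using the orthonormal-type basis $\{X_1,\sec\theta\,TX_1,\dots,X_m,\sec\theta\,TX_m\}$ of $T_pM$ supplied in the Remark following Lemma~\ref{lem: ang02}, one computes $\Omega(X_i,\sec\theta\,TX_i) = \sec\theta\,g(X_i,T^2X_i)\cdot(-1) = \sec\theta\cos^2\theta = \cos\theta$ (up to the right sign and after checking the mixed terms vanish by Proposition after Remark, i.e.\ $g(TX_i,TX_j)=0$ when $g(X_i,X_j)=0$). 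Hence $\Omega$ restricted to $T_pM$ is, in a suitable basis, the standard symplectic form scaled by $\cos\theta(p)\neq 0$, so $\Omega^m = m!\,(\cos\theta)^m\,\mathrm{vol}_M$ pointwise, which is a positive multiple of the Riemannian volume form; its integral over the compact $M$ is therefore strictly positive. Consequently $[\Omega]^m\neq 0$, whence $[\Omega]\neq 0$, and by Poincar\'e duality (or simply because a non-zero power is non-zero) the class $[\Omega]\in H^2(M,\mathbb{R})$ is non-vanishing.

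The main obstacle, and the part requiring the most care, is the pointwise linear-algebra computation verifying that $\Omega^m$ is nowhere zero — specifically confirming that in the basis $\{X_i,\sec\theta\,TX_i\}$ the $2$-form $\Omega$ has block-diagonal (standard symplectic) shape with the off-block entries $\Omega(X_i,X_j)$, $\Omega(X_i,\sec\theta\,TX_j)$ for $i\neq j$ all vanishing. This reduces to the orthogonality relations $g(TX,TY)=\cos^2\theta\,g(X,Y)$ and $g(TX,Y) = -g(X,TY)$ (skew-symmetry of $T$, which follows from anti-symmetry of $\phi$) together with $T^2 = -\cos^2\theta\cdot I$; once one fixes the $X_i$ to be mutually orthogonal unit vectors with $g(X_i,TX_j)=0$ for $i\neq j$ (possible by an inductive choice exactly as in the almost Hermitian setting), the verification is routine but must be done honestly to pin down the non-vanishing constant and its sign. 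A secondary point to state cleanly is the appeal to compactness and Stokes: a closed form whose top power has non-zero integral cannot be exact, and neither can any power of it, so all the cohomology classes $[\Omega], [\Omega]^2,\dots,[\Omega]^m$ are non-trivial; we only need the first.
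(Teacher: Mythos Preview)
Your proposal is correct and follows essentially the same route as the paper. The paper's own proof is extremely terse---it simply observes that $\xi$ normal to $M$ forces $TM = T^1M$, so $\Omega$ is non-degenerate on all of $TM$, and then declares that ``the result follows'' (appealing implicitly to the standard symplectic argument and to the analogous results of Chen--Garay and \cite{P4}); you have unpacked that implicit step explicitly by computing $\Omega^m$ in the adapted frame and integrating.
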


\begin{proof}
Since $TM = T^1 M$, by the definition of $\Omega$, $\Omega$ is non-degenerate on $M$.

Therefore, the result follows.
\end{proof}

\begin{remark}\label{rem: top01}
By the proof of Theorem \ref {thm: top02}, we have
\begin{equation}\label{eq: top08}
\dim H^{2i}(M, \mathbb{R}) \geq 1 \quad \text{for} \ 0\leq i \leq m.
\end{equation}
\end{remark}

\begin{theorem}\label{thm: top03}
Let $M$ be a $(2m+1)$-dimensional compact proper pointwise slant submanifold of a $(2n+1)$-dimensional cosymplectic manifold $(N,\phi,\xi,\eta,g)$
such that $\xi$ is tangent to $M$.

Then both $[\eta]\in H^1(M, \mathbb{R})$ and $[\Omega]\in H^2(M, \mathbb{R})$ are non-vanishing.
\end{theorem}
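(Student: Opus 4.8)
The plan is to mimic the classical argument (Chen--Garay, and the author's earlier paper \cite{P4}) that on a compact manifold carrying a suitable closed $1$-form and a suitable closed $2$-form one can detect nontrivial de Rham cohomology by integrating a top-degree form. First I would verify that $\eta$ restricted to $M$ is closed: since $N$ is cosymplectic we have $\overline{\nabla}\eta = 0$ by \eqref{eq: struc14}, and in particular $d\eta = 0$ on $N$, hence $d\eta = 0$ on $M$. Likewise $\Omega$ is closed by Theorem \ref{thm: top01}. So $[\eta]\in H^1(M,\mathbb{R})$ and $[\Omega]\in H^2(M,\mathbb{R})$ are well-defined, and $[\Omega]^m\in H^{2m}(M,\mathbb{R})$, $[\eta]\wedge[\Omega]^m\in H^{2m+1}(M,\mathbb{R})$ are well-defined classes.

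The key step is to show $\eta\wedge\Omega^m$ is a volume form on $M$, equivalently that it is nowhere zero. Fix $p\in M$. Because $\xi$ is tangent to $M$ and $M$ is $(2m+1)$-dimensional, $T_pM = \mathbb{R}\xi(p)\oplus M_p$ with $\dim M_p = 2m$. On $M_p$ the proper hypothesis $\theta(p)<\frac{\pi}{2}$ together with \eqref{eq: ang06} lets me pick, as in the Remark following Lemma \ref{lem: ang02}, an orthonormal basis $\{X_1,\sec\theta\,TX_1,\dots,X_m,\sec\theta\,TX_m\}$ of $M_p$; on this basis $\Omega$ restricted to $M_p$ is, up to the nonzero factor $\cos\theta(p)$, the standard symplectic form, so $\Omega^m|_{M_p}$ is a nonzero multiple of the volume form of $M_p$, i.e. $\Omega$ is nondegenerate on $M_p$ (this is exactly the ``non-degenerate on $T^1M$'' statement recorded after \eqref{eq: top07}). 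Since $\eta(\xi)=1$ and $\eta$ vanishes on $M_p$ by definition of $M_p$, the form $\eta\wedge\Omega^m$ evaluated on $\xi(p)$ together with the above basis equals that nonzero multiple, hence $\eta\wedge\Omega^m$ is nowhere zero, so it is a volume form and $\int_M \eta\wedge\Omega^m \neq 0$ (using compactness and orientability — orientability follows since $M$ carries a nowhere-vanishing top form).

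Finally I would draw the two conclusions. If $[\Omega] = 0$ in $H^2(M,\mathbb{R})$ then $[\eta]\wedge[\Omega]^m = 0$, contradicting $\int_M \eta\wedge\Omega^m\neq 0$; hence $[\Omega]$ is non-vanishing, and moreover $[\Omega]^i\neq 0$ for $0\le i\le m$ (otherwise a power would kill the top class), giving the analogue of Remark \ref{rem: top01}. Similarly, if $[\eta] = 0$ then $\eta = df$ for some $f\in C^\infty(M)$, so $\eta\wedge\Omega^m = d(f\,\Omega^m)$ is exact (as $\Omega^m$ is closed), whence $\int_M \eta\wedge\Omega^m = 0$ by Stokes, again a contradiction; hence $[\eta]$ is non-vanishing.

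The main obstacle is purely the pointwise linear-algebra check that $\eta\wedge\Omega^m$ does not vanish — specifically, confirming that the splitting $T_pM=\mathbb{R}\xi(p)\oplus M_p$ is $\Omega$-orthogonal (which it is: $\Omega(\xi,Y)=g(\xi,TY)=g(\phi\xi,Y)\cdot(\pm 1)$-type manipulation gives $g(\xi,TY)=-g(T\xi,Y)$ and $T\xi=0$ since $\phi\xi=0$, so $\xi$ lies in the $\Omega$-radical of $T_pM$ while $\Omega$ is nondegenerate on $M_p$), so that $\eta\wedge\Omega^m$ genuinely picks up the $\xi$-direction times a nondegenerate form on $M_p$. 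Everything else — closedness of $\eta$ and $\Omega$, and the Stokes/Poincaré-duality style deductions — is routine and parallels \cite{CG} and \cite{P4} verbatim.
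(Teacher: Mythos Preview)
Your proposal is correct and follows essentially the same approach as the paper: both construct the top-degree form $\eta\wedge\Omega^m$, use the adapted orthonormal frame $\{\xi,X_1,\sec\theta\,TX_1,\dots,X_m,\sec\theta\,TX_m\}$ to see it is nowhere vanishing, and conclude that $[\eta]$ and $[\Omega]$ cannot be trivial. Your write-up is in fact more thorough than the paper's, which simply asserts the conclusion once the volume form is in hand; you additionally make explicit the closedness of $\eta|_M$, the $\Omega$-radical check for $\xi$, and the Stokes argument that the paper leaves implicit.
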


\begin{proof}
Using (\ref {eq: ang06}), we can choose a local orthonormal frame $\{ \xi,X_1,\sec \theta TX_1, \cdots,$ $X_m,\sec \theta TX_m \}$ of $TM$.

Thus,
\begin{equation}\label{eq: top09}
\eta \wedge \Omega^m = \eta \wedge g(\ , T)^m \neq 0 \ \text{at each point of} \ M
\end{equation}
so that it gives a volume form on $M$.

Hence, both $[\eta]$ and $[\Omega]$ are never vanishing.
\end{proof}

\begin{remark}\label{rem: top02}
By the proof of Theorem \ref {thm: top03}, we get
\begin{equation}\label{eq: top010}
\dim H^{i}(M, \mathbb{R}) \geq 1 \quad \text{for} \ 0\leq i \leq 2m+1.
\end{equation}
\end{remark}

By using (\ref {eq: top08}) and (\ref {eq: top010}), we obtain

\begin{corollary}
Every $m$-sphere $S^m$, $m\geq 3$, cannot be immersed in a cosymplectic manifold as a proper pointwise slant submanifold.
\end{corollary}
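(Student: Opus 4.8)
The plan is to show that an $m$-sphere $S^m$ with $m \ge 3$ cannot support the cohomological lower bounds forced by Remarks \ref{rem: top01} and \ref{rem: top02}, according to whether $\xi$ is tangent or normal to the immersed submanifold. First I would recall that if $S^m$ were immersed in a cosymplectic manifold $(N,\phi,\xi,\eta,g)$ as a \emph{proper} pointwise slant submanifold, then by Remark 4.3(2) of the excerpt (the consequence of Theorem \ref{thm: ang02} via Theorem 3.3 of \cite{L}) we have that $\xi$ is either tangent to $S^m$ or normal to $S^m$, so exactly these two cases arise.

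Next I would split into the two cases. If $\xi$ is normal to $S^m$, then $S^m$ is even-dimensional, say $m = 2k$, and Remark \ref{rem: top01} gives $\dim H^{2i}(S^m,\mathbb{R}) \ge 1$ for all $0 \le i \le k$. Taking $i = 1$ (legitimate since $m \ge 3$ forces $k \ge 2$, so $2 \le m$ and also since $m \ge 3$ rules out $m=2$) yields $\dim H^2(S^m,\mathbb{R}) \ge 1$; but $H^2(S^m,\mathbb{R}) = 0$ for $m \ge 3$, a contradiction. If $\xi$ is tangent to $S^m$, then $S^m$ is odd-dimensional, say $m = 2k+1$ with $k \ge 1$, and Remark \ref{rem: top02} gives $\dim H^i(S^m,\mathbb{R}) \ge 1$ for all $0 \le i \le 2k+1 = m$. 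Again taking $i = 2$ (legitimate since $k \ge 1$ gives $m \ge 3 \ge 2$) yields $\dim H^2(S^m,\mathbb{R}) \ge 1$, contradicting $H^2(S^m,\mathbb{R}) = 0$ for $m \ge 3$.

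Since both cases are impossible, no such immersion exists, which is the assertion. The only mild subtlety — the place where I would be careful rather than the place where there is real difficulty — is checking that the index $i$ I plug into the cohomological estimates actually lies in the allowed range in each parity case; this is where the hypothesis $m \ge 3$ is used (it is exactly what excludes $S^2$, where $H^2(S^2,\mathbb{R}) \ne 0$ so the argument would fail, and $S^1$, which has $m < 3$). Everything else is immediate from the two remarks together with the standard computation $H^j(S^m,\mathbb{R}) = 0$ for $0 < j < m$. Note one should also observe the compactness hypothesis in Theorems \ref{thm: top02} and \ref{thm: top03} is automatic since $S^m$ is compact, so those theorems (hence the remarks) apply directly to any such immersed submanifold.
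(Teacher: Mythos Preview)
Your proof is correct and follows exactly the paper's approach: the paper simply asserts that the corollary follows from the cohomological estimates (\ref{eq: top08}) and (\ref{eq: top010}), and you have spelled out the tangent/normal dichotomy for $\xi$ (via Remark~4.2(2)), the resulting parity of $m$, and the index-checking that makes the contradiction with $H^2(S^m,\mathbb{R})=0$ go through. One small quibble: your parenthetical invoking Theorem~\ref{thm: ang02} is misplaced---the dichotomy you need is Theorem~3.3 of \cite{L} (as the remark cites), not Lotta's anti-invariance result for contact metric manifolds---but this does not affect the argument.
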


\begin{corollary}
Any $m$-dimensional real projective space $\mathbb{R}\mathbb{P}^m$, $m\geq 3$, cannot be immersed in a cosymplectic manifold
as a proper pointwise slant submanifold.
\end{corollary}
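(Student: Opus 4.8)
The plan is to contradict the known real de Rham cohomology of $\mathbb{R}\mathbb{P}^m$ using the topological obstructions recorded in Remark \ref{rem: top01} and Remark \ref{rem: top02}. Recall that for $m\geq 3$ the real cohomology of $\mathbb{R}\mathbb{P}^m$ equals $\mathbb{R}$ in degree $0$, equals $\mathbb{R}$ in degree $m$ when $m$ is odd, and vanishes in every other degree; in particular $H^1(\mathbb{R}\mathbb{P}^m,\mathbb{R})=0$ and $H^2(\mathbb{R}\mathbb{P}^m,\mathbb{R})=0$ for all $m\geq 3$. So I would suppose, for contradiction, that $M=\mathbb{R}\mathbb{P}^m$ with $m\geq 3$ is immersed in a cosymplectic manifold $(N,\phi,\xi,\eta,g)$ as a proper pointwise slant submanifold. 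Since the construction of $h$, $T$, $F$ and hence of $\Omega$ and (the restriction of) $\eta$ is local, these forms are genuine differential forms on the compact manifold $M$, and Theorems \ref{thm: top01}, \ref{thm: top02}, \ref{thm: top03} apply to $M$ itself; moreover, $M$ being proper, its slant function takes values in $[0,\frac{\pi}{2})$.

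The next step is to pin down whether $\xi$ is tangent or normal and to extract the parity of $\dim M$ in each case. Because $\theta<\frac{\pi}{2}$ everywhere, Theorem 3.3 of \cite{L} (see the second item of the remark following the definition of pointwise slant submanifold) forces $\xi$ to be either everywhere tangent to $M$ or everywhere normal to $M$. If $\xi\in\Gamma(TM^{\perp})$, then $M_p=T_pM$ for every $p$, and using (\ref{eq: ang06}) one chooses at each point an orthonormal basis of $T_pM$ of the form $\{X_1,\sec\theta\,TX_1,\dots,X_k,\sec\theta\,TX_k\}$, so $\dim M$ is even. If $\xi\in\Gamma(TM)$, then $M_p$ is the orthogonal complement of $\xi(p)$ in $T_pM$, which by the same argument has even dimension, so $\dim M$ is odd.

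Then I would split into the two cases. If $\xi$ is normal, then $m=\dim M$ is even, say $m=2m'$ with $m'\geq 2$, and $M$ is a compact proper pointwise slant submanifold of a cosymplectic manifold with $\xi$ normal; Remark \ref{rem: top01}, i.e. (\ref{eq: top08}), gives $\dim H^{2i}(M,\mathbb{R})\geq 1$ for $0\leq i\leq m'$, and in particular $\dim H^2(\mathbb{R}\mathbb{P}^m,\mathbb{R})\geq 1$, contradicting $H^2(\mathbb{R}\mathbb{P}^m,\mathbb{R})=0$. If $\xi$ is tangent, then $m=\dim M$ is odd, say $m=2m'+1$, and $M$ is a compact proper pointwise slant submanifold with $\xi$ tangent; Remark \ref{rem: top02}, i.e. (\ref{eq: top010}), gives $\dim H^i(M,\mathbb{R})\geq 1$ for $0\leq i\leq 2m'+1$, and in particular $\dim H^1(\mathbb{R}\mathbb{P}^m,\mathbb{R})\geq 1$, contradicting $H^1(\mathbb{R}\mathbb{P}^m,\mathbb{R})=0$. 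Either way we reach a contradiction, so no such immersion exists.

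This is essentially a bookkeeping argument built on Section \ref{topol}, and I do not expect a genuine obstacle. The only points that need care are (a) the tangent/normal dichotomy for $\xi$ and the resulting parity of $\dim M$, which is what rules out the ``wrong-parity'' sub-cases, and (b) recalling precisely in which degrees the real cohomology of $\mathbb{R}\mathbb{P}^m$ vanishes; in the odd case one may alternatively invoke the non-vanishing of $[\eta]$ directly, which already contradicts $H^1(M,\mathbb{R})=0$.
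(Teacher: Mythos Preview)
Your proposal is correct and follows exactly the approach the paper intends: the paper's proof is simply the one-line remark ``By using (\ref{eq: top08}) and (\ref{eq: top010}), we obtain'', and you have spelled out precisely the case split (via the tangent/normal dichotomy for $\xi$ and the resulting parity of $\dim M$) that makes this work against the known vanishing of $H^1(\mathbb{R}\mathbb{P}^m,\mathbb{R})$ and $H^2(\mathbb{R}\mathbb{P}^m,\mathbb{R})$ for $m\geq 3$.
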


\begin{remark}
For $2$-sphere $S^2$ and $2$-torus $T^2$, they satisfy the condition (\ref {eq: top08}). And by Proposition \ref {prop: ang02}, they are pointwise slant
submanifolds of a cosymplectic manifold $(N,\phi,\xi,\eta,g)$ if they are just submanifolds of $N$.
\end{remark}

\section{Examples}\label{exam1}

In this section we give some examples of pointwise slant submanifolds.

\begin{example}
Define a map $i : \mathbb{R}^{3} \mapsto \mathbb{R}^{5}$ by
$$
i(x_1,x_2,x_3) = (y_1,y_2,y_3,y_4,t) = (x_1,\sin x_2,0,\cos x_2,x_3).
$$
Let $M := \{ (x_1,x_2,x_3)\in \mathbb{R}^{3} \mid 0< x_2 <\frac{\pi}{2} \}$.

We define $(\phi,\xi,\eta,g)$ on $\mathbb{R}^{5}$ as follows:
\begin{align*}
&\phi(a_1\tfrac{\partial}{\partial y_1} + \cdots + a_4\tfrac{\partial}{\partial y_4} + a_5\tfrac{\partial}{\partial t}) =
-a_2\tfrac{\partial}{\partial y_1} + a_1\tfrac{\partial}{\partial y_2} - a_4\tfrac{\partial}{\partial y_3} + a_3\tfrac{\partial}{\partial y_4},   \\
&\xi := \tfrac{\partial}{\partial t}, \quad \eta := dt, \ a_i\in \mathbb{R}, 1\leq i\leq 5,
\end{align*}
$g$ is the Euclidean metric on $\mathbb{R}^{5}$.
It is easy to check that $(\phi,\xi,\eta,g)$ is an almost contact metric structure on $\mathbb{R}^{5}$.

Then $M$ is a pointwise slant submanifold of an almost contact metric manifold $(\mathbb{R}^{5},\phi,\xi,\eta,g)$
with the slant function $k(x_1,x_2,x_3) = x_2$ such that $\xi$ is tangent to $M$.
\end{example}

\begin{example}\label{ex sl01}
Define a map $i : \mathbb{R}^{2} \mapsto \mathbb{R}^{5}$ by
$$
i(x_1,x_2) = (y_1,y_2,y_3,y_4,t) = (0,\cos x_1,x_2,\sin x_1,0).
$$
Let $M := \{ (x_1,x_2)\in \mathbb{R}^{2} \mid 0< x_1 <\frac{\pi}{2} \}$.

We define $(\phi,\xi,\eta,g)$ on $\mathbb{R}^{5}$ as follows:
\begin{align*}
&\phi(a_1\tfrac{\partial}{\partial y_1} + \cdots + a_4\tfrac{\partial}{\partial y_4} + a_5\tfrac{\partial}{\partial t}) =
-a_2\tfrac{\partial}{\partial y_1} + a_1\tfrac{\partial}{\partial y_2} - a_4\tfrac{\partial}{\partial y_3} + a_3\tfrac{\partial}{\partial y_4},   \\
&\xi := \tfrac{\partial}{\partial t}, \quad \eta := dt, \ a_i\in \mathbb{R}, 1\leq i\leq 5,
\end{align*}
$g$ is the Euclidean metric on $\mathbb{R}^{5}$.
It is easy to check that $(\phi,\xi,\eta,g)$ is an almost contact metric structure on $\mathbb{R}^{5}$.

We also know that $(\mathbb{R}^{5},\phi,\xi,\eta,g)$ is a cosymplectic manifold.

Then $M$ is a pointwise slant submanifold of a cosymplectic manifold $(\mathbb{R}^{5},\phi,\xi,\eta,g)$
with the slant function $k(x_1,x_2) = x_1$ such that $\xi$ is normal to $M$.
\end{example}

\begin{example}\label{ex sl02}
Let $t$ be a coordinate of $\mathbb{R}$ and $(y_1,y_2,y_3,y_4)$ coordinates of $\mathbb{R}^4$.
Let $N := \mathbb{R} \times_f \mathbb{R}^4$ be a warped product manifold of the Euclidean space $\mathbb{R}$ and the Euclidean space $\mathbb{R}^4$
with the natural projections $\pi_1 : N \mapsto \mathbb{R}$ and $\pi_2 : N \mapsto \mathbb{R}^4$ such that the warping function $f(t) = e^t$.

Let $\mathbb{R}^4 = (\mathbb{R}^4,\bar{g},J)$, where $\bar{g}$ is the Euclidean metric on $\mathbb{R}^4$ and $J$ is a complex structure on $\mathbb{R}^4$
defined by
$$
J(a_1\tfrac{\partial}{\partial y_1} + \cdots + a_4\tfrac{\partial}{\partial y_4}) =
-a_2\tfrac{\partial}{\partial y_1} + a_1\tfrac{\partial}{\partial y_2} - a_4\tfrac{\partial}{\partial y_3} + a_3\tfrac{\partial}{\partial y_4}.
$$
Then $\mathbb{R}^4$ is obviously K\"{a}hler.

We define $(\phi,\xi,\eta,g)$ on $N$ as follows:
\begin{align*}
&\phi(a_1\tfrac{\partial}{\partial y_1} + \cdots + a_4\tfrac{\partial}{\partial y_4} + a_5\tfrac{d}{dt}) :=
J(a_1\tfrac{\partial}{\partial y_1} + \cdots + a_4\tfrac{\partial}{\partial y_4}),   \\
&\xi := \tfrac{d}{dt}, \quad \eta := dt,  \\
&g(Z, W) := \eta(Z) \eta(W) + f(t)^2 \bar{g}(d\pi_2(Z), d\pi_2(W))
\end{align*}
for $Z,W\in \Gamma(TN)$, $a_i\in \mathbb{R}$, $1\leq i\leq 5$.

We easily check that $(\phi,\xi,\eta,g)$ is an almost contact metric structure on $N$.

Furthermore, by Proposition 3 of \cite{K00}, $(N,\phi,\xi,\eta,g)$ is a Kenmotsu manifold.

Let $M := \{ (x_1,x_2)\in \mathbb{R}^{2} \mid 0< x_1 <\frac{\pi}{2} \}$.

Define a map $i : \mathbb{R}^{2} \mapsto \mathbb{R}^{4}\subset N$ by
$$
i(x_1,x_2) = (y_1,y_2,y_3,y_4) = (x_2,\sin x_1,1972,\cos x_1).
$$
Then $M$ is a pointwise slant submanifold of a Kenmotsu manifold $(N,\phi,\xi,\eta,g)$
with the slant function $k(x_1,x_2) = x_1$ such that $\xi$ is normal to $M$.
\end{example}

\begin{example}\label{ex 01}
Let $M$ be a submanifold of a hyperk\"{a}hler manifold $(\overline{M},J_1,J_2,J_3,\bar{g})$ such that $M$ is complex
with respect to the complex structure $J_1$ (i.e., $J_1(TM) = TM$) and totally real with respect to the complex structure
$J_2$ (i.e., $J_2(TM) \subset TM^{\perp}$) \cite {B}.
Let $f : \overline{M} \mapsto [0, \frac{\pi}{2}]$ be a $C^{\infty}$-function and $N := \overline{M}\times \mathbb{R}$ with the natural projections
$\pi_1 : N \mapsto \overline{M}$ and $\pi_2 : N \mapsto \mathbb{R}$.

We define $(\phi,\xi,\eta,g)$ on $N$ as follows:
\begin{align*}
&\phi(X + h\tfrac{d}{dt}) := \cos (f\circ \pi_1) J_1 X - \sin (f\circ \pi_1) J_2 X,   \\
&\xi := \tfrac{d}{dt}, \quad \eta := dt,  \\
&g(Z, W) := \bar{g}(d\pi_1 Z, d\pi_1 W) + \eta(Z)\cdot \eta(W)
\end{align*}
for $X\in \Gamma(T\overline{M})$, $h\in C^{\infty} (N)$, $Z,W\in \Gamma(TN)$, and $t$ is a coordinate of $\mathbb{R}$.

It is easy to show that $(\phi,\xi,\eta,g)$ is an almost contact metric structure on $N$.

Then $M$ is a pointwise slant submanifold of an almost contact metric manifold $(N,\phi,\xi,\eta,g)$
with the slant function $f\circ \pi_1$ such that $\xi$ is normal to $M$.
\end{example}

\begin{example}\label{ex 02}
Given an Euclidean space $\mathbb{R}^{5} = \mathbb{R}^{4} \times \mathbb{R}$ with coordinates $(y_1,\cdots,y_4,t)$,
we consider complex structures $J_1$ and $J_2$ on $\mathbb{R}^{4}$ as follows:
\begin{align*}
  &J_1(\tfrac{\partial}{\partial y_1})=\tfrac{\partial}{\partial y_2},
  J_1(\tfrac{\partial}{\partial y_2})=-\tfrac{\partial}{\partial y_1},
  J_1(\tfrac{\partial}{\partial y_3})=\tfrac{\partial}{\partial y_4},
  J_1(\tfrac{\partial}{\partial y_4})=-\tfrac{\partial}{\partial y_3},     \\
  &J_2(\tfrac{\partial}{\partial y_1})=\tfrac{\partial}{\partial y_3},
  J_2(\tfrac{\partial}{\partial y_2})=-\tfrac{\partial}{\partial y_4},
  J_2(\tfrac{\partial}{\partial y_3})=-\tfrac{\partial}{\partial y_1},
  J_2(\tfrac{\partial}{\partial y_4})=\tfrac{\partial}{\partial y_2},    \\
\end{align*}
Let $f : \mathbb{R}^{5} \mapsto [0, \frac{\pi}{2}]$ be a $C^{\infty}$-function.

We define $(\phi,\xi,\eta,g)$ on $\mathbb{R}^{5}$ as follows:
\begin{align*}
&\phi(X + h\tfrac{d}{dt}) := \cos f \cdot J_1 X - \sin f \cdot J_2 X,   \\
&\xi := \tfrac{d}{dt}, \quad \eta := dt,
\end{align*}
$g$ is the Euclidean metric on $\mathbb{R}^{5}$
for $X\in \Gamma(T\mathbb{R}^{4})$ and $h\in C^{\infty} (\mathbb{R}^{5})$.

We can easily check that $(\phi,\xi,\eta,g)$ is an almost contact metric structure on $\mathbb{R}^{5}$.

Define a map $i : \mathbb{R}^{2} \mapsto \mathbb{R}^{5}$ by
$$
i(x_1,x_2) = (y_1,y_2,y_3,y_4,t) = (e,-\pi,x_2,x_1,\sqrt{2}).
$$
Then $\mathbb{R}^{2}$ is a pointwise slant submanifold of an almost contact metric manifold $(\mathbb{R}^{5},\phi,\xi,\eta,g)$
with the slant function $f$ such that $\xi$ is normal to $\mathbb{R}^{2}$.
\end{example}

\begin{example}\label{ex 03}
With all the conditions of Example \ref {ex 02}, define a function $f : \mathbb{R}^{5} \mapsto [0, \frac{\pi}{2}]$ by
$f(y_1,\cdots,y_4,t) = \arctan (|y_1+y_2+y_3+y_4|)$.

Then $\mathbb{R}^{2}$ is a pointwise slant submanifold of an almost contact metric manifold $(\mathbb{R}^{5},\phi,\xi,\eta,g)$
with the slant function $(f\circ i)(x_1,x_2) = \arctan (|e-\pi+x_1+x_2|)$ such that $\xi$ is normal to $\mathbb{R}^{2}$.
\end{example}

\section{Pointwise semi-slant submanifolds}\label{semi-slant}

In this section we introduce the notion of pointwise semi-slant submanifolds of an almost contact metric manifold and
obtain a characterization of pointwise semi-slant submanifolds.

\begin{definition}\label{def: sslant01}
Let $(N,\phi,\xi,\eta,g)$ be an almost contact metric manifold and $M$ a submanifold of $N$. The submanifold $M$ is called a
{\em pointwise semi-slant submanifold} if there is a distribution $\mathcal{D}_1 \subset TM$ on $M$ such that
$$
TM = \mathcal{D}_1 \oplus \mathcal{D}_2, \quad \phi(\mathcal{D}_1) \subset \mathcal{D}_1,
$$
and at each given point $p\in M$ the angle $\theta = \theta(X)$ between $\phi X$ and the space $(\mathcal{D}_2)_p$ is constant for nonzero
$X\in (\mathcal{D}_2)_p$, where $\mathcal{D}_2$ is the orthogonal complement of $\mathcal{D}_1$ in $TM$.
\end{definition}

We call the angle $\theta$ a {\em semi-slant function} as a function on $M$.

\begin{remark}\label{rem: sslant01}
Let $M$ be a pointwise semi-slant submanifold of an almost contact metric manifold $(N,\phi,\xi,\eta,g)$ with the semi-slant function $\theta$.

\begin{enumerate}

\item Given a point $p\in M$, if $\xi(p)\in T_p M$, then $\xi(p)$ should belong to $(\mathcal{D}_1)_p$ (i.e., $\xi(p)\in (\mathcal{D}_1)_p$).

If not, we can induce contradiction as follows:

Assume that $\xi(p) = X + Y$ for some $X\in (\mathcal{D}_1)_p$ and some nonzero $Y\in (\mathcal{D}_2)_p$.
Then $0 = \phi \xi(p) = \phi X + \phi Y$ with $\phi X\in (\mathcal{D}_1)_p$ and $\phi Y\in (\mathcal{D}_2)_p \oplus T_p M^{\perp}$ so that
$\phi X = 0$ and $\phi Y = 0$. Since $g(X, Y) = 0$ and $\ker \phi = < \xi >$, we must have $X = 0$ and $Y = \xi(p)$.
But $\theta(Y) = \theta(\xi(p))$ is not defined, contradiction.

\item Let $(\overline{\mathcal{D}}_1)_p := \{ X\in (\mathcal{D}_1)_p \mid g(X, \xi(p)) = 0 \}$ for $p\in M$.

Then we have either $(\mathcal{D}_1)_p = (\overline{\mathcal{D}}_1)_p$ or $(\mathcal{D}_1)_p = < \xi(p) > \oplus (\overline{\mathcal{D}}_1)_p$.

We can check this as follows:

Since $\phi(\mathcal{D}_1) \subset \mathcal{D}_1$, we get
$$
\phi((\overline{\mathcal{D}}_1)_p) \subset (\mathcal{D}_1)_p \ \text{and} \ g(\phi((\overline{\mathcal{D}}_1)_p), \xi(p)) = 0
$$
so that $\phi((\overline{\mathcal{D}}_1)_p) \subset (\overline{\mathcal{D}}_1)_p$ implies $\phi((\overline{\mathcal{D}}_1)_p) = (\overline{\mathcal{D}}_1)_p$.
Thus, we can choose an orthonormal basis $\{ Z_1,\phi Z_1, \cdots, Z_k,\phi Z_k \}$ of $(\overline{\mathcal{D}}_1)_p$.
Assume that $(\mathcal{D}_1)_p \neq (\overline{\mathcal{D}}_1)_p$. Then there is a vector $Z = a\xi(p) + X\in (\mathcal{D}_1)_p$ with $a \neq 0$
and $g(\xi(p), X) = 0$. We know $\phi Z = \phi X\in (\mathcal{D}_1)_p$ and $g(\phi X, \xi(p)) = 0$ so that $\phi X\in (\overline{\mathcal{D}}_1)_p$ implies
$\displaystyle{\phi X = \sum_{i=1}^k (a_iZ_i + a_{k+i}\phi Z_i)}$ for some $a_i\in \mathbb{R}$, $1\leq i\leq 2k$.

Hence, $\displaystyle{-X = \phi^2 X = \sum_{i=1}^k (-a_{k+i}Z_i + a_i\phi Z_i) \in (\overline{\mathcal{D}}_1)_p \subset (\mathcal{D}_1)_p}$,
which implies $\frac{1}{a}(Z - X) = \xi(p) \in (\mathcal{D}_1)_p$. Therefore, the result follows.

\item From (2), we have either $\mathcal{D}_1 = \overline{\mathcal{D}}_1$ or $\mathcal{D}_1 = < \xi > \oplus \overline{\mathcal{D}}_1$,
where $\displaystyle{ \overline{\mathcal{D}}_1 := \bigcup_{p\in M} (\overline{\mathcal{D}}_1)_p }$.

If not, then we can choose a $C^{\infty}$-curve $c : (-\epsilon, \epsilon) \mapsto M$ for sufficiently small $\epsilon > 0$ such that
either $(\mathcal{D}_1)_{c(0)} = (\overline{\mathcal{D}}_1)_{c(0)}$ and $(\mathcal{D}_1)_{c(t)} = < \xi(c(t)) > \oplus (\overline{\mathcal{D}}_1)_{c(t)}$
for $t\in (-\epsilon, \epsilon) - \{ 0 \}$ or $(\mathcal{D}_1)_{c(0)} = < \xi(c(0)) > \oplus (\overline{\mathcal{D}}_1)_{c(0)}$
and  $(\mathcal{D}_1)_{c(t)} = (\overline{\mathcal{D}}_1)_{c(t)}$ for $t\in (-\epsilon, \epsilon) - \{ 0 \}$.

Take an orthonormal frame $\{ X_1(t),X_2(t),\cdots,X_l(t) \}$ of $\mathcal{D}_1$ along $c$. At the first case, we obtain
\begin{equation}\label{eq: sslant01}
\xi(c(t)) = \sum_{i=1}^l a_i(t) X_i(t)
\end{equation}
for some $a_i(t)\in \mathbb{R}$, $1\leq i \leq l$, $t\in (-\epsilon, \epsilon) - \{ 0 \}$.
Since $\xi$ is a $C^{\infty}$-vector field on $N$, we can obtain  the $C^{\infty}$-extension of
right hand side of (\ref {eq: sslant01}) along $c$. But $\xi(c(0))\notin (\mathcal{D}_1)_{c(0)}$ and
$\displaystyle{\sum_{i=1}^l a_i(0) X_i(0)}\in (\mathcal{D}_1)_{c(0)}$ with
$\displaystyle{ a_i(0) := \lim_{t\rightarrow 0} a_i(t)}$, $1\leq i \leq l$, contradiction.
In a similar way, we can also induce contradiction at the second case.

\item From (1), we get $\xi(p)\notin (\mathcal{D}_2)_p$ for any $p\in M$.

\item If $\theta : M \mapsto (0, \frac{\pi}{2})$, then $M$ is said to be {\em proper}.

\end{enumerate}
\end{remark}

Let $M$ be a pointwise semi-slant submanifold of an almost contact metric manifold $(N,\phi,\xi,\eta,g)$.
Then there is a distribution $\mathcal{D}_1 \subset TM$ on $M$ such that
$$
TM = \mathcal{D}_1 \oplus \mathcal{D}_2, \quad \phi(\mathcal{D}_1) \subset \mathcal{D}_1,
$$
and at each given point $p\in M$ the angle $\theta = \theta(X)$ between $\phi X$ and the space $(\mathcal{D}_2)_p$ is constant for nonzero
$X\in (\mathcal{D}_2)_p$, where $\mathcal{D}_2$ is the orthogonal complement of $\mathcal{D}_1$ in $TM$.

For $X\in \Gamma(TM)$, we write
\begin{equation}\label{eq: sslant02}
X = PX + QX,
\end{equation}
where $PX\in \Gamma(\mathcal{D}_1)$ and $QX\in \Gamma(\mathcal{D}_2)$.

For $X\in \Gamma(TM)$, we have
\begin{equation}\label{eq: sslant03}
\phi X = TX + FX,
\end{equation}
where $TX\in \Gamma(TM)$ and $FX\in \Gamma(TM^{\perp})$.

For $Z\in \Gamma(TM^{\perp})$, we get
\begin{equation}\label{eq: sslant04}
\phi Z = tZ + fZ,
\end{equation}
where $tZ\in \Gamma(TM)$ and $fZ\in \Gamma(TM^{\perp})$.

Denote by $(TN)|_M$ the restriction of $TN$ to $M$ (i.e., $(TN)|_M = TM \oplus TM^{\perp}$).

For $U\in \Gamma((TN)|_M)$, we write
\begin{equation}\label{eq: sslant05}
U = \mathcal{H}U + \mathcal{V}U,
\end{equation}
where $\mathcal{H}U\in \Gamma(TM)$ and $\mathcal{V}U\in \Gamma(TM^{\perp})$.

Hence,
\begin{eqnarray}
& & T(\mathcal{D}_1) \subset \mathcal{D}_1, F(\mathcal{D}_1) = 0, T(\mathcal{D}_2) \subset \mathcal{D}_2,
t(TM^{\perp}) \subset \mathcal{D}_2,   \label{eq: sslant06}     \\
& & T^2 + tF = -I + \eta\otimes \mathcal{H}(\xi) \ \text{and} \ FT + fF = \eta\otimes \mathcal{V}(\xi) \ \text{on} \ TM   \label{eq: sslant07}      \\
& & Tt + tf = \eta\otimes \mathcal{H}(\xi) \ \text{and} \ Ft + f^2 = -I +  \eta\otimes \mathcal{V}(\xi) \ \text{on} \ TM^{\perp}     \label{eq: sslant08}
\end{eqnarray}
Then we obtain
\begin{equation}\label{eq: sslant09}
TM^{\perp} = F\mathcal{D}_2 \oplus \mu,
\end{equation}
where $\mu$ is the orthogonal complement of $F\mathcal{D}_2$ in $TM^{\perp}$.

For $X,Y\in \Gamma(TM)$, we define
\begin{eqnarray}
&  & (\nabla_X T)Y := \nabla_X (TY) - T\nabla_X Y,   \label{eq: sslant010}    \\
&  &  (D_X F)Y := D_X (FY) - F\nabla_X Y.    \label{eq: sslant011}
\end{eqnarray}
The tensors $T$ and $F$ are called {\em parallel} if $\nabla T = 0$ and $\nabla F = 0$, respectively.

In the same way to Lemma \ref {lem: top01}, we have

\begin{lemma}\label{lem: sslant01}
Let $M$ be a pointwise semi-slant submanifold of a cosymplectic manifold $(N,\phi,\xi,\eta,g)$.
Then we obtain
\begin{enumerate}
\item
\begin{eqnarray}
&  &(\nabla_X T) Y = A_{FY} X + th(X,Y),  \label{eq: sslant012}   \\
&  &(D_X F) Y = -h(X,TY) + fh(X,Y)   \label{eq: sslant013}
\end{eqnarray}
for $X,Y\in \Gamma(TM)$.

\item
\begin{eqnarray}
&  &-TA_Z X + tD_X Z = \nabla_X (tZ) - A_{fZ} X,  \label{eq: sslant014}  \\
&  &-FA_Z X + fD_X Z = h(X,tZ) + D_X (fZ)   \label{eq: sslant015}
\end{eqnarray}
for $X\in \Gamma(TM)$ and $Z\in \Gamma(TM^{\perp})$.
\end{enumerate}
\end{lemma}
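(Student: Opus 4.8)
The plan is to derive all four identities from the Gauss and Weingarten formulas together with the cosymplectic condition $\overline{\nabla}\phi = 0$, exactly as one proves the analogous facts for slant submanifolds of Kähler manifolds (cf. Lemma~\ref{lem: top01}). The single computational engine is the expansion of $\overline{\nabla}_X(\phi Y)$ in two different ways. On one hand, since $\phi$ is parallel by (\ref{eq: struc14}), we have $\overline{\nabla}_X(\phi Y) = \phi\,\overline{\nabla}_X Y$. On the other hand we expand both sides using the splittings $\phi Y = TY + FY$, $\phi Z = tZ + fZ$ and the Gauss--Weingarten formulas (\ref{eq: gauss}), (\ref{eq: weing}).

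First I would prove (\ref{eq: sslant012}) and (\ref{eq: sslant013}). Take $X,Y\in\Gamma(TM)$. Using (\ref{eq: gauss}) twice,
\begin{equation*}
\overline{\nabla}_X(\phi Y) = \overline{\nabla}_X(TY) + \overline{\nabla}_X(FY) = \nabla_X(TY) + h(X,TY) - A_{FY}X + D_X(FY),
\end{equation*}
while on the other side
\begin{equation*}
\phi\,\overline{\nabla}_X Y = \phi(\nabla_X Y + h(X,Y)) = T\nabla_X Y + F\nabla_X Y + t\,h(X,Y) + f\,h(X,Y).
\end{equation*}
Equating the two expressions and comparing tangential and normal components gives
\begin{equation*}
\nabla_X(TY) - T\nabla_X Y = A_{FY}X + t\,h(X,Y), \qquad D_X(FY) - F\nabla_X Y = -h(X,TY) + f\,h(X,Y),
\end{equation*}
which are precisely (\ref{eq: sslant012}) and (\ref{eq: sslant013}) after invoking the definitions (\ref{eq: sslant010}), (\ref{eq: sslant011}).

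For part (2), the argument is entirely parallel, starting instead from $\overline{\nabla}_X(\phi Z) = \phi\,\overline{\nabla}_X Z$ for $X\in\Gamma(TM)$, $Z\in\Gamma(TM^{\perp})$. Expanding the left side via $\phi Z = tZ + fZ$ and (\ref{eq: gauss}), (\ref{eq: weing}) yields $\nabla_X(tZ) + h(X,tZ) - A_{fZ}X + D_X(fZ)$, while the right side $\phi\,\overline{\nabla}_X Z = \phi(-A_Z X + D_X Z) = -TA_Z X - FA_Z X + tD_X Z + fD_X Z$. Matching tangential parts gives (\ref{eq: sslant014}) and matching normal parts gives (\ref{eq: sslant015}). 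I do not anticipate a genuine obstacle here: the content is a bookkeeping exercise in separating $TM$- and $TM^{\perp}$-components, and the only point requiring any care is making sure the parallelism of $\phi$ (and hence of $T$, $F$, $t$, $f$ in the appropriate sense) is used correctly and that the sign conventions in (\ref{eq: weing}) are tracked faithfully. Note that the semi-slant structure $TM = \mathcal{D}_1\oplus\mathcal{D}_2$ plays no role in the proof itself — it only matters that $M$ is a submanifold of a cosymplectic manifold — which is why the statement is identical in form to Lemma~\ref{lem: top01}; one could simply write ``the proof is the same as that of Lemma~\ref{lem: top01}'' and be done.
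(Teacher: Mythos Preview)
Your proposal is correct and matches the paper's approach exactly: the paper itself simply writes ``In the same way to Lemma~\ref{lem: top01}, we have'' and gives no further argument, and your computation is precisely the standard expansion of $\overline{\nabla}_X(\phi\,\cdot\,)=\phi\,\overline{\nabla}_X(\cdot)$ via Gauss--Weingarten that underlies that earlier lemma. Your closing observation that the semi-slant splitting plays no role and that one could just invoke Lemma~\ref{lem: top01} verbatim is exactly what the paper does.
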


\begin{proposition}\label{prop: sslant01}
Let $M$ be a pointwise semi-slant submanifold of an almost contact metric manifold $(N,\phi,\xi,\eta,g)$.
Assume that either $\mathcal{D}_2 \subset \ker \eta$ or $\mu \subset \ker \eta$.

Then $\mu$ is $\phi$-invariant (i.e., $\phi \mu \subset \mu$).
\end{proposition}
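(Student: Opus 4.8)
The plan is to show $\phi\mu\subset\mu$ by verifying two things: that $\phi$ maps $\mu$ into $TM^{\perp}$, and that $\phi\mu$ is orthogonal to $F\mathcal{D}_2$. Since $TM^{\perp}=F\mathcal{D}_2\oplus\mu$ by \eqref{eq: sslant09}, these two facts together give the claim. First I would record, for an arbitrary $W\in\Gamma(\mu)$, the decomposition $\phi W=tW+fW$ from \eqref{eq: sslant04}, so the task reduces to proving $tW=0$ (which forces $\phi W=fW\in TM^{\perp}$) and $fW\perp F\mathcal{D}_2$.

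To show $tW=0$: for any $Y\in\Gamma(\mathcal{D}_2)$ consider $g(tW,Y)=g(\phi W-fW,Y)=g(\phi W,Y)$ since $fW\in TM^{\perp}\perp TM$. Now $g(\phi W,Y)=-g(W,\phi Y)=-g(W,TY+FY)=-g(W,FY)$, using that $W\perp TM$ kills the $TY$ term. Since $Y\in\mathcal{D}_2$ we have $FY\in F\mathcal{D}_2$, and $W\in\mu$ is by definition orthogonal to $F\mathcal{D}_2$, so $g(\phi W,Y)=0$. Hence $tW\in\mathcal{D}_2$ is orthogonal to all of $\mathcal{D}_2$; but I also need $tW$ to have no $\mathcal{D}_1$-component — that is guaranteed by \eqref{eq: sslant06}, which states $t(TM^{\perp})\subset\mathcal{D}_2$. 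Therefore $tW=0$ and $\phi W=fW\in TM^{\perp}$.

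It remains to show $fW=\phi W\perp F\mathcal{D}_2$, i.e.\ $g(\phi W,FY)=0$ for all $Y\in\Gamma(\mathcal{D}_2)$. Here is where the hypothesis enters, and it is the main obstacle — the two cases require slightly different handling of the $\xi$-term. Write $g(\phi W,FY)=g(\phi W,\phi Y)-g(\phi W,TY)=g(\phi W,\phi Y)$ since $TY\in TM$ while $\phi W\in TM^{\perp}$. By the compatibility identity \eqref{eq: struc4}, $g(\phi W,\phi Y)=g(W,Y)-\eta(W)\eta(Y)$. Since $W\in TM^{\perp}$ and $Y\in TM$, $g(W,Y)=0$, so $g(\phi W,FY)=-\eta(W)\eta(Y)$. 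Now if $\mathcal{D}_2\subset\ker\eta$ then $\eta(Y)=0$; if instead $\mu\subset\ker\eta$ then $\eta(W)=0$; either way the product vanishes. Hence $\phi W\perp F\mathcal{D}_2$, and combined with $\phi W\in TM^{\perp}$ we conclude $\phi W\in\mu$. Thus $\phi\mu\subset\mu$, which is the assertion.

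One should double-check that $\phi W\in TM^{\perp}$ already follows once $tW=0$ without circularity — it does, since $\phi W=tW+fW$ and $fW$ is by definition the $TM^{\perp}$-part. I expect no serious difficulty beyond keeping the case distinction straight; the computations are all one-line applications of \eqref{eq: struc4}, \eqref{eq: sslant04}, \eqref{eq: sslant06}, and \eqref{eq: sslant09}.
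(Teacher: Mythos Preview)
Your argument is correct and follows essentially the same two-step strategy as the paper: first show $\phi\mu\subset TM^{\perp}$, then show $\phi\mu\perp F\mathcal{D}_2$, and conclude via the splitting \eqref{eq: sslant09}. The only cosmetic difference is in the second step: the paper invokes the derived identity \eqref{eq: sslant07} (namely $fF=-FT+\eta\otimes\mathcal{V}\xi$) to produce the term $-\eta(X')\eta(Y)$, whereas you reach the same term more directly from the compatibility relation \eqref{eq: struc4}; both routes are equivalent and equally short.
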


\begin{proof}
Given $Y\in \Gamma(\mu)$ and $X\in \Gamma(TM)$ with $X = X_1 + X_2$, $X_1\in \Gamma(\mathcal{D}_1)$, $X_2\in \Gamma(\mathcal{D}_2)$,
we have
$$
g(X, \phi Y) = -g(\phi X, Y) = -g(\phi X_1 + \phi X_2, Y) = 0
$$
so that
\begin{equation}\label{eq: sslant016}
\phi \mu \subset TM^{\perp}.
\end{equation}
Given $Y\in \Gamma(\mu)$ and $X\in \Gamma(F\mathcal{D}_2)$ with $X = FX'$ for some $X'\in \Gamma(\mathcal{D}_2)$, by using (\ref {eq: sslant07})
and the hypothesis, we get
\begin{align*}
g(X, \phi Y)
&= -g(\phi X, Y) = -g(fFX', Y)   \\
&= g(FTX' - \eta(X') \mathcal{V}(\xi), Y)  \\
&= -\eta(X')\cdot \eta(Y) = 0.
\end{align*}
With (\ref {eq: sslant016}), it implies $\phi \mu \subset \mu$.
\end{proof}

In a similar way to Proposition 3.9 of \cite {P4}, we have

\begin{lemma}\label{lem: sslant02}
Let $M$ be a pointwise semi-slant submanifold of an almost contact metric manifold $(N,\phi,\xi,\eta,g)$ with the semi-slant function $\theta$.

Then
\begin{equation}\label{eq: sslant017}
g((T^2 + \cos^2 \theta (I - \eta\otimes \xi))(X), Y) = 0 \quad \text{for} \ X,Y\in \Gamma(\mathcal{D}_2).
\end{equation}
\end{lemma}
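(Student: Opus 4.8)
The plan is to mimic the proof of Lemma~\ref{lem: ang02}, with $M_p$ replaced by the slant distribution $(\mathcal{D}_2)_p$ and with the term $\eta\otimes\xi$ carried along to absorb the defect $\eta(X)^2$ coming from \eqref{eq: struc4}. First I would fix a point $p\in M$ and a nonzero $X\in(\mathcal{D}_2)_p$. Since $\xi(p)\notin(\mathcal{D}_2)_p$ by Remark~\ref{rem: sslant01}(4) and $\ker\phi=\langle\xi(p)\rangle$, we get $\phi X\neq0$; writing $\phi X=TX+FX$ as in \eqref{eq: sslant03}, the inclusion $T(\mathcal{D}_2)\subset\mathcal{D}_2$ from \eqref{eq: sslant06} together with $FX\perp TM$ shows that $TX$ is exactly the orthogonal projection of $\phi X$ onto $(\mathcal{D}_2)_p$. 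Hence the angle $\theta(X)$ between $\phi X$ and $(\mathcal{D}_2)_p$ satisfies $\cos\theta(X)=\|TX\|/\|\phi X\|$, using $g(\phi X,TX)=g(TX,TX)$.

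Next I would split into cases on $\theta(p)$, which by the definition of a pointwise semi-slant submanifold depends only on $p$. If $\theta(p)=\frac{\pi}{2}$, then $TX=0$ for every $X\in(\mathcal{D}_2)_p$, so $T^2\equiv0$ there and $\cos^2\theta(p)=0$, and \eqref{eq: sslant017} is immediate. If $0\le\theta(p)<\frac{\pi}{2}$, I would note that $\phi$ is anti-symmetric, hence $T$ is anti-symmetric on $T_pM$ (the $F$-parts, being normal, drop out) and $g(TX,TX)=-g(T^2X,X)$, while $\|\phi X\|^2=g(X,X)-\eta(X)^2$ by \eqref{eq: struc4}. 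Combining these with $\cos\theta(X)=\|TX\|/\|\phi X\|$ gives, for every $X\in(\mathcal{D}_2)_p$,
\begin{equation*}
g\bigl((T^2+\cos^2\theta(p)(I-\eta\otimes\xi))(X),X\bigr)=g(T^2X,X)+\cos^2\theta(p)\bigl(g(X,X)-\eta(X)^2\bigr)=0,
\end{equation*}
since $\eta(X)^2=g((\eta\otimes\xi)(X),X)$.

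Finally I would polarize. Put $S:=T^2+\cos^2\theta(p)(I-\eta\otimes\xi)$; each summand is $g$-symmetric when paired with tangent vectors (for $\eta\otimes\xi$ this is $g((\eta\otimes\xi)u,v)=\eta(u)\eta(v)=g(u,(\eta\otimes\xi)v)$), so $S$ is $g$-symmetric on $T_pM$. Replacing $X$ by $X+Y$ with $X,Y\in(\mathcal{D}_2)_p$ in the displayed identity and using $g(SX,X)=g(SY,Y)=0$ yields $g(SX,Y)+g(SY,X)=0$, whence $2g(SX,Y)=0$ by symmetry of $S$. Since $p$ was arbitrary, this is precisely \eqref{eq: sslant017}. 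The only slightly delicate points are the identification of $TX$ with the orthogonal projection of $\phi X$ onto $\mathcal{D}_2$ (which rests on \eqref{eq: sslant06}) and the bookkeeping of the $\eta\otimes\xi$ term; neither is a genuine obstacle, and the statement is the expected $\mathcal{D}_2$-analogue of Lemma~\ref{lem: ang02} and of Proposition~3.9 of \cite{P4}.
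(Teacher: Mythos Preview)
Your proof is correct and follows essentially the same route as the paper's own proof: identify $TX$ as the projection of $\phi X$ onto $(\mathcal{D}_2)_p$, derive $\cos^2\theta(p)\,\|\phi X\|^2=\|TX\|^2=-g(T^2X,X)$, then polarize using the $g$-symmetry of $T^2+\cos^2\theta(I-\eta\otimes\xi)$. You are in fact more careful than the paper on two minor points---you justify explicitly why $TX$ lands in $(\mathcal{D}_2)_p$ via \eqref{eq: sslant06} and you separate out the case $\theta(p)=\tfrac{\pi}{2}$---but the argument is the same.
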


\begin{proof}
We will prove this at each point of $M$.

Gven a point $p\in M$, if $X\in (\mathcal{D}_2)_p$ is vanishing, then done!
Given a nonzero $X\in (\mathcal{D}_2)_p$, we obtain
\begin{equation}\label{eq: sslant018}
\cos \theta(p) = \frac{g(\phi X, TX)}{||\phi X|| \ ||TX||} = \frac{||TX||}{||\phi X||}
\end{equation}
so that $\cos^2 \theta(p) g(\phi X, \phi X) = g(TX, TX) = -g(T^2X, X)$.
Substituting $X$ by $X+Y$, $Y\in (\mathcal{D}_2)_p$, at the above equation, we induce
\begin{equation}\label{eq: sslant019}
g((T^2 + \cos^2 \theta (I - \eta\otimes \xi))(X), Y) + g(X, (T^2 + \cos^2 \theta (I - \eta\otimes \xi))(Y)) = 0.
\end{equation}
But $T^2 + \cos^2 \theta (I - \eta\otimes \xi)$ is also symmetric so that
\begin{equation*}\label{eq: sslant020}
g((T^2 + \cos^2 \theta (I - \eta\otimes \xi))(X), Y) = 0.
\end{equation*}
\end{proof}

\begin{remark}\label{rem: sslant02}
Let $M$ be a pointwise semi-slant submanifold of an almost contact metric manifold $(N,\phi,\xi,\eta,g)$ with the semi-slant function $\theta$.
Assume that either $\xi$ is tangent to $M$ or $\xi$ is normal to $M$.

\begin{enumerate}

\item By using (\ref {eq: sslant017}) and Remark \ref {rem: sslant01} (1), we get
\begin{equation}\label{eq: sslant021}
T^2 X = -\cos^2 \theta \cdot X \quad \text{for} \ X\in \Gamma(\mathcal{D}_2).
\end{equation}

\item By (\ref {eq: sslant021}), we obtain
\begin{equation}\label{eq: sslant022}
g(TX, TY) = \cos^2 \theta g(X, Y),
\end{equation}
\begin{equation}\label{eq: sslant023}
g(FX, FY) = \sin^2 \theta g(X, Y),
\end{equation}
for $X,Y\in \Gamma(\mathcal{D}_2)$.

\item At each given point $p\in M$ with $0\leq \theta(p) < \frac{\pi}{2}$, by using (\ref {eq: sslant022}), we can choose an orthonormal basis
$\{ X_1,\sec \theta TX_1,\cdots,X_k,\sec \theta TX_k \}$ of $(\mathcal{D}_2)_p$.
\end{enumerate}
\end{remark}

\section{Distributions}\label{dist}

In this section we consider distributions $\mathcal{D}_1$ and $\mathcal{D}_2$ and deal with the notion of totally umbilic submanifolds.

Notice that if $N = (N,\phi,\xi,\eta,g)$ is Sasakian, then from Theorem \ref {thm: ang02},  there does not exist a proper
pointwise semi-slant submanifold $M$ of $N$ such that $\xi$ is normal to $M$.

\begin{lemma}\label{lem: dist01}
Let $M$ be a proper pointwise semi-slant submanifold of an almost contact metric manifold $(N,\phi,\xi,\eta,g)$. Assume that
$\xi$ is tangent to $M$  and $N$ is one of the following three manifolds: cosymplectic, Sasakian, Kenmotsu.

Then the distribution $\mathcal{D}_1$ is integrable if and only if
\begin{equation}\label{eq: dist01}
g(h(X, \phi Y) - h(Y, \phi X), FZ) = 0
\end{equation}
for $X,Y\in \Gamma(\mathcal{D}_1)$ and $Z\in \Gamma(\mathcal{D}_2)$.
\end{lemma}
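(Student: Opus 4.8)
The plan is to use the standard criterion that $\mathcal{D}_1$ is integrable if and only if $[X,Y]\in\Gamma(\mathcal{D}_1)$ for all $X,Y\in\Gamma(\mathcal{D}_1)$, and then to detect the $\mathcal{D}_2$-component of $[X,Y]$ by pairing with $TZ$ for $Z\in\Gamma(\mathcal{D}_2)$ (together with pairing against $\xi$, which will turn out to be automatic). Since by Remark~\ref{rem: sslant01}(1) we have $\xi\in\Gamma(\mathcal{D}_1)$ and $T^2=-\cos^2\theta\cdot I$ on $\mathcal{D}_2$ (Remark~\ref{rem: sslant02}(1)), the operator $T$ restricted to $\mathcal{D}_2$ is invertible whenever $\theta<\frac\pi2$, i.e. exactly in the proper case; hence knowing $g([X,Y],TZ)$ for all $Z\in\Gamma(\mathcal{D}_2)$ determines the $\mathcal{D}_2$-component of $[X,Y]$. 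So the whole proof reduces to computing $g([X,Y],TZ)$ for $X,Y\in\Gamma(\mathcal{D}_1)$, $Z\in\Gamma(\mathcal{D}_2)$, and showing this vanishes precisely when \eqref{eq: dist01} holds.

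First I would write $g([X,Y],TZ)=g(\nabla_XY,TZ)-g(\nabla_YX,TZ)$ and convert each term using the ambient structure. The key move is: since $F X=0$ and $FY=0$ for $X,Y\in\Gamma(\mathcal{D}_1)$, we have $\phi X=TX\in\Gamma(\mathcal{D}_1)$, so $\overline{\nabla}_Y(\phi X)=\overline{\nabla}_Y(TX)$. Expanding $\overline{\nabla}_Y(\phi X)=(\overline{\nabla}_Y\phi)X+\phi\overline{\nabla}_YX$ via the Gauss formula \eqref{eq: gauss} and the defining identity for the relevant structure (\eqref{eq: struc14} for cosymplectic, \eqref{eq: struc10} for Sasakian, \eqref{eq: struc12} for Kenmotsu — in every case the extra terms involve only $g(\cdot,\cdot)\xi$, $\eta(\cdot)\cdot$ or $g(\phi\cdot,\cdot)\xi$ type terms), and then taking the $g(\,\cdot\,,TZ)$ inner product, the structure correction terms drop out because $g(\xi,TZ)=0$ (as $TZ\in\Gamma(\mathcal{D}_2)$ is orthogonal to $\xi$) and, in the Sasakian/Kenmotsu case, the $\eta(X)$-type coefficients carry $\eta(X)=\eta(Y)=0$ when $X,Y\in\Gamma(\overline{\mathcal{D}}_1)$ — and one reduces to the purely $\phi$-parallel-looking computation, handling the $\xi$-direction of $\mathcal{D}_1$ separately (for $X=\xi$ one uses $\overline{\nabla}_Y\xi$ from \eqref{eq: struc11} or \eqref{eq: struc13}, whose value is proportional to $\phi Y\in\mathcal{D}_1$ or to $Y-\eta(Y)\xi$, again giving no $TZ$-component modulo \eqref{eq: dist01}). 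The upshot is an identity of the shape
\begin{equation*}
g([X,Y],TZ)=g(h(X,\phi Y)-h(Y,\phi X),FZ)
\end{equation*}
after using $g(h(Y,X),\phi Z)=g(h(Y,X),TZ+FZ)=g(h(Y,X),FZ)$ to move the normal-valued second fundamental form onto $FZ$, and using \eqref{eq: shape} to symmetrize; the two occurrences of $h(X,Y)$ paired with $FZ$ cancel against each other. Here $h(X,\phi Y)=h(X,TY)$ and $h(Y,\phi X)=h(Y,TX)$ since $FX=FY=0$.

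I also need the complementary check that the $\langle\xi\rangle$-component of $[X,Y]$ vanishes automatically, i.e. $g([X,Y],\xi)=0$: this follows since $g(\nabla_XY,\xi)=Xg(Y,\xi)-g(Y,\nabla_X\xi)=-g(Y,\nabla_X\xi)$, and $\nabla_X\xi$ is tangential with a known form ($0$, $-\phi X$, or $X-\eta(X)\xi$ in the three cases), so $g([X,Y],\xi)=-g(Y,\nabla_X\xi)+g(X,\nabla_Y\xi)$ is antisymmetric in a symmetric expression in the cosymplectic and Kenmotsu cases, and equals $-g(Y,\phi X)+g(X,\phi Y)=2g(X,\phi Y)=2g(TX,Y)=0$ in the Sasakian case since $TX\in\overline{\mathcal D}_1$ and $Y\in\overline{\mathcal D}_1$ with $\phi$ acting as an isometry preserving orthogonality (or more simply, $g(TX,Y)+g(X,TY)=0$ by antisymmetry of $T$). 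Combining: $[X,Y]$ has zero $\xi$-component always, and zero $\mathcal{D}_2$-component iff \eqref{eq: dist01} holds; since these are the only components outside $\overline{\mathcal{D}}_1$, integrability of $\mathcal{D}_1$ is equivalent to \eqref{eq: dist01}.

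The step I expect to be the main obstacle is the bookkeeping of the structure-correction terms across the three cases simultaneously — making sure that in the Sasakian and Kenmotsu cases the terms proportional to $\eta(X)$, $\eta(Y)$, $g(X,Y)\xi$, $\eta(\cdot)\phi(\cdot)$ genuinely contribute nothing to $g(\cdot,TZ)$ and nothing spurious to $g(\cdot,\xi)$, and separately verifying the $X=\xi$ (or $Y=\xi$) sub-case where $FX\neq 0$-type worries are replaced by the explicit formula for $\overline\nabla\xi$. Once those are organized, the cancellation of the $h(X,Y)$ terms and the symmetrization via \eqref{eq: shape} are routine, exactly parallel to the cosymplectic computation underlying Lemma~\ref{lem: sslant01}.
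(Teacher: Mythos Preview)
Your approach is essentially the paper's: both detect the $\mathcal{D}_2$-component of $[X,Y]$ by pairing with elements of $\mathcal{D}_2$ and unwinding via the structure equation for $\overline{\nabla}\phi$. The paper does it slightly more directly: it computes $g([X,Y],Z)$ using the identity $g(U,V)=g(\phi U,\phi V)+\eta(U)\eta(V)$ together with $T^2 Z=-\cos^2\theta\,Z$, obtaining the self-referential relation
\[
\sin^2\theta\, g([X,Y],Z)=g\bigl(h(X,\phi Y)-h(Y,\phi X),FZ\bigr),
\]
so the properness hypothesis ($\sin\theta\neq 0$) enters as the coefficient on the left rather than via invertibility of $T$ on $\mathcal{D}_2$. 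Your route through $g([X,Y],TZ)$ is equivalent but a bit more roundabout; in particular your ``shape'' identity is not quite literal (the right-hand side should carry $FTZ$ or a $\sin^2\theta$ factor), though the equivalence you need still follows.

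One genuine correction: the ``complementary check'' that $g([X,Y],\xi)=0$ is unnecessary, because under the hypothesis $\xi\in\Gamma(TM)$ you already have $\xi\in\Gamma(\mathcal{D}_1)$ (Remark~\ref{rem: sslant01}(1),(3)); integrability of $\mathcal{D}_1$ only requires $g([X,Y],Z)=0$ for $Z\in\Gamma(\mathcal{D}_2)$. More to the point, your claim there is actually false in the Sasakian case: for $X,Y\in\Gamma(\overline{\mathcal{D}}_1)$ one computes $g([X,Y],\xi)=2g(\phi X,Y)$, which does not vanish in general (take $Y=\phi X$). This does no harm to the lemma since that component lives in $\mathcal{D}_1$, but you should drop the entire $\xi$-component discussion rather than try to repair it.
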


\begin{proof}
We will only give its proof when $N$ is Sasakian. For the other cases, we can show them in the same way.

Given $X,Y\in \Gamma(\mathcal{D}_1)$ and $Z\in \Gamma(\mathcal{D}_2)$, by using Remark \ref {rem: sslant01} and (\ref {eq: struc10}), we obtain
\begin{align*}
&g([X,Y], Z)   \\
&= g(\phi [X,Y], \phi Z) + \eta([X,Y]) \eta(Z)   \\
&= g(\phi(\overline{\nabla}_X Y - \overline{\nabla}_Y X), TZ + FZ)  \\
&= -g(\overline{\nabla}_X Y - \overline{\nabla}_Y X, T^2Z + FTZ)  \\
&\ + g(h(X, \phi Y) - h(Y, \phi X) - (g(X, Y)\xi - \eta(Y)X - g(Y, X)\xi + \eta(X)Y), FZ)  \\
&= \cos^2 \theta g([X,Y], Z) + g(h(X, \phi Y) - h(Y, \phi X), FZ)
\end{align*}
so that
$$
\sin^2 \theta g([X,Y], Z) = g(h(X, \phi Y) - h(Y, \phi X), FZ).
$$
Therefore, we get the result.
\end{proof}

In the same way to Lemma \ref {lem: dist01}, we obtain

\begin{lemma}\label{lem: dist001}
Let $M$ be a proper pointwise semi-slant submanifold of an almost contact metric manifold $(N,\phi,\xi,\eta,g)$.
Assume that  $\xi$ is normal to $M$ and $N$ is one of the following two manifolds: cosymplectic, Kenmotsu.

Then the distribution $\mathcal{D}_1$ is integrable if and only if
\begin{equation}\label{eq: dist001}
g(h(X, \phi Y) - h(Y, \phi X), FZ) = 0
\end{equation}
for $X,Y\in \Gamma(\mathcal{D}_1)$ and $Z\in \Gamma(\mathcal{D}_2)$.
\end{lemma}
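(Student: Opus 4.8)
The plan is to mimic the proof of Lemma~\ref{lem: dist01}, the only difference being that $\xi$ is now normal to $M$ rather than tangent. First I would fix $X,Y\in\Gamma(\mathcal{D}_1)$ and $Z\in\Gamma(\mathcal{D}_2)$ and observe that since $\mathcal{D}_1$ is $\phi$-invariant, integrability of $\mathcal{D}_1$ is equivalent to $g([X,Y],Z)=0$ for all such $X,Y,Z$; moreover, because $\xi$ is normal to $M$, for vectors tangent to $M$ the term $\eta(\cdot)$ vanishes on $\mathcal{D}_1\oplus\mathcal{D}_2$, so from \eqref{eq: struc4} we get $g([X,Y],Z)=g(\phi[X,Y],\phi Z)$, with no $\eta([X,Y])\eta(Z)$ correction to worry about. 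This is actually a simplification compared with the tangent case.

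Next I would expand $\phi[X,Y]=\phi(\overline{\nabla}_X Y-\overline{\nabla}_Y X)$ and write $\phi Z=TZ+FZ$ via \eqref{eq: sslant03}, so that $g(\phi[X,Y],\phi Z)=-g(\overline{\nabla}_X Y-\overline{\nabla}_Y X,\,\phi(TZ)+\phi(FZ))$ after moving one $\phi$ across by anti-symmetry. For the cosymplectic case $\overline{\nabla}\phi=0$ makes the computation cleanest; for the Kenmotsu case I would use \eqref{eq: struc12}, noting that the extra terms $g(\phi X,Y)\xi-\eta(Y)\phi X$ contribute nothing when paired against $FZ\in\Gamma(TM^\perp)$ and $X,Y\in\mathcal{D}_1$ with $\xi$ normal (so $\eta(Y)=g(Y,\xi)=0$, and the $\xi$-component is killed because $FZ$ is normal but the inner product is taken after applying $\phi$... more precisely one pushes $\phi$ through $\overline\nabla$ and the correction terms pair to zero by normality of $\xi$ and the fact that $X,Y$ are tangent). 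Using \eqref{eq: sslant021}, i.e. $T^2Z=-\cos^2\theta\, Z$ on $\mathcal{D}_2$ (valid here since $\xi$ is normal to $M$, by Remark~\ref{rem: sslant02}(1)), together with $FTZ=\phi(TZ)-T(TZ)$, the tangential part reassembles into $\cos^2\theta\,g([X,Y],Z)$ and the Gauss formula \eqref{eq: gauss} converts the $\overline{\nabla}_X Y$-against-$FZ$ terms into $g(h(X,\phi Y)-h(Y,\phi X),FZ)$. Collecting everything yields
\begin{equation*}
\sin^2\theta\, g([X,Y],Z)=g(h(X,\phi Y)-h(Y,\phi X),FZ),
\end{equation*}
and since $M$ is proper, $\sin^2\theta\neq 0$, giving the claimed equivalence.

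The main obstacle, such as it is, will be bookkeeping rather than conceptual: one must carefully track that the Kenmotsu correction terms in \eqref{eq: struc12} genuinely drop out under the hypotheses ($\xi\perp M$, $X,Y\in\mathcal{D}_1$, test vector $FZ$ normal), and that the decomposition $\phi(\overline{\nabla}_X Y)=T\nabla_X Y+F\nabla_X Y+th(X,Y)+fh(X,Y)$ is applied on the right factor so that only the $FZ$-components survive the pairing. I would also double-check that integrability of $\mathcal{D}_1$ reduces to $g([X,Y],Z)=0$ over $\mathcal{D}_2$ alone — this is immediate because $[X,Y]$ automatically has no obstruction within $\mathcal{D}_1$, and $TM=\mathcal{D}_1\oplus\mathcal{D}_2$ is an orthogonal decomposition. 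Since the Sasakian subcase is excluded here (as noted before Lemma~\ref{lem: dist01}, by Theorem~\ref{thm: ang02} there is no proper pointwise semi-slant $M$ with $\xi$ normal in the Sasakian setting), only the two cases cosymplectic and Kenmotsu need to be handled, and both follow the template above.
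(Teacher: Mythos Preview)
Your proposal is correct and follows exactly the approach the paper intends: the paper gives no separate proof for Lemma~\ref{lem: dist001}, stating only ``In the same way to Lemma~\ref{lem: dist01}, we obtain'' the result. Your outline reproduces that template faithfully, and your observation that the hypothesis $\xi\in\Gamma(TM^{\perp})$ kills all the $\eta$-terms (so the Kenmotsu correction $g(\phi X,Y)\xi-\eta(Y)\phi X$ pairs to zero against $FZ$) is precisely the bookkeeping check needed to make the argument go through.
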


\begin{lemma}\label{lem: dist02}
Let $M$ be a proper pointwise semi-slant submanifold of an almost contact metric manifold $(N,\phi,\xi,\eta,g)$.
Assume that $\xi$ is normal to $M$ and $N$ is one of the following two manifolds: cosymplectic, Kenmotsu.

Then the distribution $\mathcal{D}_2$ is integrable if and only if
\begin{equation}\label{eq: dist02}
g(A_{FTW} Z - A_{FTZ} W, X) = g(A_{FW} Z - A_{FZ} W, \phi X)
\end{equation}
for $X\in \Gamma(\mathcal{D}_1)$ and $Z,W\in \Gamma(\mathcal{D}_2)$.
\end{lemma}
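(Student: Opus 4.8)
The plan is to characterize integrability of $\mathcal{D}_2$ by testing $g([Z,W],X)$ for $X\in\Gamma(\mathcal{D}_1)$, $Z,W\in\Gamma(\mathcal{D}_2)$, and rewriting everything in terms of shape operators via the Gauss--Weingarten formulas. Since $\xi$ is normal and $N$ is cosymplectic or Kenmotsu, $\xi$ being normal together with Proposition~\ref{prop: sslant01}-type reasoning (or directly Remark~\ref{rem: sslant02}) gives the clean identity $T^2 = -\cos^2\theta\cdot I$ on $\mathcal{D}_2$; I will only write the cosymplectic case in detail and note that the Kenmotsu case follows identically, the extra terms in $(\overline\nabla_X\phi)Y$ dropping out because they involve $\eta$ which kills tangent vectors here.

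First I would start from $g([Z,W],X)$ and use $\mathcal{D}_1$-invariance to write $\phi X\in\Gamma(\mathcal{D}_1)$, hence (since $\xi\perp M$) $g([Z,W],X) = g(\phi[Z,W],\phi X) = g(\overline\nabla_Z\phi W - \overline\nabla_W\phi Z,\phi X)$ after commuting $\phi$ past $\overline\nabla$ using $\overline\nabla\phi=0$. Next I split $\phi W = TW + FW$ and $\phi Z = TZ + FZ$ and expand each term with Gauss (for the $TW,TZ$ pieces) and Weingarten (for the $FW,FZ$ pieces):
\begin{align*}
g(\overline\nabla_Z TW,\phi X) &= g(\nabla_Z TW,\phi X),\\
g(\overline\nabla_Z FW,\phi X) &= -g(A_{FW}Z,\phi X),
\end{align*}
and similarly with $Z,W$ swapped. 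The $\nabla_Z TW - \nabla_W TZ$ terms I would handle by applying $\phi$ and the splitting $T^2W + FTW$ once more, converting $g(\nabla_Z TW - \nabla_W TZ,\phi X)$ via another round of $\overline\nabla\phi=0$ into a combination of $\cos^2\theta\,g([Z,W],X)$ (coming from $T^2 = -\cos^2\theta I$ on $\mathcal{D}_2$ and $FTW\perp TM$ needing care) and terms $g(A_{FTW}Z - A_{FTZ}W,X)$. Collecting, the $\cos^2\theta\,g([Z,W],X)$ term moves to the left side, leaving $\sin^2\theta\,g([Z,W],X)$ equal to a combination of $g(A_{FTW}Z - A_{FTZ}W,X)$ and $g(A_{FW}Z - A_{FZ}W,\phi X)$, which after the obvious sign bookkeeping is exactly \eqref{eq: dist02}. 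Integrability is then equivalent to $\sin^2\theta\,g([Z,W],X)=0$ for all such $X$, and since $M$ is proper ($\theta<\pi/2$ so $\sin^2\theta\neq 0$... actually $\theta>0$, so $\sin^2\theta>0$) and $[Z,W]\in\Gamma(TM) = \Gamma(\mathcal{D}_1\oplus\mathcal{D}_2)$ while we also trivially have $g([Z,W],Z')$ unconstrained, the vanishing of all $\mathcal{D}_1$-components is precisely what integrability of $\mathcal{D}_2$ means.

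The main obstacle will be the careful tracking of which pieces land in $TM$ versus $TM^\perp$ when $\phi$ is applied twice: $\phi TW$ has a tangential part $T^2W$ and a normal part $FTW$, and only the tangential part contributes to pairings against $\phi X\in\Gamma(TM)$, while $FTW$ contributes only through $A_{FTW}$. Getting the signs consistent between the two Weingarten applications (once for $FW$, once for $FTW$) and making sure the $\cos^2\theta$ coefficient is collected correctly — rather than, say, a $\cos^2\theta\pm$something — is the delicate step; everywhere else is a routine symmetric-operator and Gauss/Weingarten computation. I would also double-check that the assumption "$\xi$ normal, $N$ cosymplectic or Kenmotsu" is genuinely needed only to (i) justify $T^2 = -\cos^2\theta I$ on $\mathcal{D}_2$ via Remark~\ref{rem: sslant02}(1) and (ii) kill the $\eta$-terms, matching the hypothesis pattern of Lemma~\ref{lem: dist02} as stated.
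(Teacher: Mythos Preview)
Your proposal is correct and follows essentially the same route as the paper: start from $g([Z,W],X)=g(\phi[Z,W],\phi X)$ (the $\eta$-term dropping since $\xi\perp M$), commute $\phi$ past $\overline\nabla$ using the structure equation, split $\phi W=TW+FW$ and $\phi Z=TZ+FZ$, apply the same step once more to the $T$-pieces, invoke $T^2=-\cos^2\theta\,I$ on $\mathcal D_2$, and collect to get $\sin^2\theta\,g([Z,W],X)$ equal to the right-hand side of \eqref{eq: dist02}. The only cosmetic difference is that the paper writes out the Kenmotsu case while you write out the cosymplectic one; the extra Kenmotsu terms $g(\phi Z,W)\xi-\eta(W)\phi Z$ indeed vanish against $\phi X$ and $X$ exactly as you anticipate.
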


\begin{proof}
We only give its proof when $N$ is Kenmotsu.

Given $X\in \Gamma(\mathcal{D}_1)$ and $Z,W\in \Gamma(\mathcal{D}_2)$, by using (\ref {eq: struc12}) and  Remark \ref {rem: sslant02}, we get
\begin{align*}
&g([Z,W], X)   \\
&= g(\phi [Z,W], \phi X) + \eta([Z,W]) \eta(X)   \\
&= g(\phi(\overline{\nabla}_Z W - \overline{\nabla}_W Z), \phi X)  \\
&= g(\overline{\nabla}_Z (TW + FW) - \overline{\nabla}_W (TZ + FZ), \phi X)  \\
&\ - g(g(\phi Z, W)\xi - \eta(W) \phi Z - g(\phi W, Z)\xi + \eta(Z) \phi W, \phi X)   \\
&= -g(\overline{\nabla}_Z (T^2 W + FTW) - \overline{\nabla}_W (T^2 Z + FTZ), X)  \\
&\ + g(g(\phi Z, TW)\xi - \eta(TW) \phi Z - g(\phi W, TZ)\xi + \eta(TZ) \phi W, X)      \\
&\ + g(A_{FZ} W - A_{FW} Z, \phi X)      \\
&= \cos^2 \theta g([Z,W], X) + g(A_{FTW} Z - A_{FTZ} W, X) + g(A_{FZ} W - A_{FW} Z, \phi X)
\end{align*}
so that
$$
\sin^2 \theta g([Z,W], X) = g(A_{FTW} Z - A_{FTZ} W, X) + g(A_{FZ} W - A_{FW} Z, \phi X).
$$
Therefore, the result follows.
\end{proof}

\begin{lemma}\label{lem: dist03}
Let $M$ be a proper pointwise semi-slant submanifold of an almost contact metric manifold $(N,\phi,\xi,\eta,g)$.
Assume that $\xi$ is tangent to $M$ and $N$ is one of the following two manifolds: cosymplectic, Kenmotsu.

Then the distribution $\mathcal{D}_2$ is integrable if and only if
\begin{equation}\label{eq: dist03}
g(A_{FTW} Z - A_{FTZ} W, X) = g(A_{FW} Z - A_{FZ} W, \phi X)
\end{equation}
for $X\in \Gamma(\mathcal{D}_1)$ and $Z,W\in \Gamma(\mathcal{D}_2)$.
\end{lemma}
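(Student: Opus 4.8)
The plan is to mimic the proof of Lemma \ref{lem: dist02} almost verbatim, replacing the Kenmotsu structure equation by the one available when $\xi$ is tangent. The starting point is the same tensorial identity: for $X\in\Gamma(\mathcal{D}_1)$ and $Z,W\in\Gamma(\mathcal{D}_2)$, since $\mathcal{D}_2$ is integrable exactly when $g([Z,W],X)=0$ for all such $X$, we compute $g([Z,W],X)$ by writing $g([Z,W],X) = g(\phi[Z,W],\phi X) + \eta([Z,W])\eta(X)$ via (\ref{eq: struc4}), and then expand $\phi[Z,W] = \phi(\overline{\nabla}_Z W - \overline{\nabla}_W Z)$ using (\ref{eq: gauss}), (\ref{eq: weing}), (\ref{eq: sslant03}), (\ref{eq: sslant04}), together with the appropriate covariant-derivative formula for $\phi$: (\ref{eq: struc14}) if $N$ is cosymplectic, (\ref{eq: struc12}) if $N$ is Kenmotsu.

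First I would handle the $\eta$-term. Because $\xi$ is now tangent to $M$, $\eta(X)$ need not vanish for $X\in\Gamma(\mathcal{D}_1)$, so one might worry this term survives; but by Remark \ref{rem: sslant01}(1) and (4) we have $\xi\in\Gamma(\mathcal{D}_1)$, hence it suffices to test integrability of $\mathcal{D}_2$ against $X\in\Gamma(\mathcal{D}_1)$, and in fact against $X=\xi$ separately. Since $Z,W\in\Gamma(\mathcal{D}_2)\subset\Gamma(\ker\eta)$ by Remark \ref{rem: sslant01}(4), one checks that $g([Z,W],\xi) = -g(\overline{\nabla}_Z W - \overline{\nabla}_W Z,\xi)$ reduces, using $\overline{\nabla}_Z\xi$ from (\ref{eq: struc13}) or (\ref{eq: struc14}), to $g(\overline{\nabla}_Z\xi,W)-g(\overline{\nabla}_W\xi,Z)$, which is symmetric and vanishes (in the Kenmotsu case $\overline{\nabla}_Z\xi = Z$ up to the $\eta(Z)\xi$ term which dies on $\mathcal{D}_2$; in the cosymplectic case it is zero outright). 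So the $\xi$-component of $[Z,W]$ causes no trouble, and for the remaining $X\in\Gamma(\overline{\mathcal{D}}_1)$ we may use $\eta(X)=0$.

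Next, I would substitute the structure equation and use $T^2 W = -\cos^2\theta\,W$ on $\mathcal{D}_2$ from Remark \ref{rem: sslant02}(1) (valid precisely because $\xi$ is tangent here, which is one of the two hypotheses of that remark). The extra terms produced by (\ref{eq: struc12}) in the Kenmotsu case all carry a factor $\eta(TW)$ or $\eta(TZ)$, or an $\eta(\cdot)\xi$ paired against $X\in\Gamma(\overline{\mathcal{D}}_1)$, and vanish; in the cosymplectic case there are no such terms at all. Grouping the $\cos^2\theta\,g([Z,W],X)$ term on the left produces $\sin^2\theta\,g([Z,W],X)$ on one side and the combination $g(A_{FTW}Z - A_{FTZ}W, X) - g(A_{FW}Z - A_{FZ}W,\phi X)$ on the other, using (\ref{eq: shape}) to turn the normal-connection and shape-operator terms into the stated form; since $\theta\in(0,\frac{\pi}{2})$ by properness, $\sin^2\theta$ is nowhere zero, so $g([Z,W],X)=0$ for all $X\in\Gamma(\mathcal{D}_1)$ iff (\ref{eq: dist03}) holds, which is the claim.

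The only genuine obstacle is bookkeeping: one must verify carefully that every auxiliary $\eta$-term coming from the Kenmotsu formula (\ref{eq: struc12}) and from $\overline{\nabla}_Z\xi$ really does drop out, which is where the hypothesis ``$\xi$ tangent'' (so that $\xi\in\mathcal{D}_1$, and $\mathcal{D}_2\subset\ker\eta$) is used; everything else is the same contraction as in Lemma \ref{lem: dist02}. I would therefore write the proof as ``We only give the proof when $N$ is Kenmotsu; the cosymplectic case is the same (indeed simpler)'' and carry out the one displayed computation, exactly parallel to the proof of Lemma \ref{lem: dist02}, noting the cancellation of the $\eta$-terms.
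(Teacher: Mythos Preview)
Your approach is essentially the paper's: both carry over the computation of Lemma~\ref{lem: dist02} and dispose of the extra term $\eta([Z,W])\,\eta(X)$ by showing $\eta([Z,W])=0$. The only organizational difference is that the paper first derives the identity for general $X\in\Gamma(\mathcal{D}_1)$ (its equation~(\ref{eq: dist04})) and then substitutes $X=\xi$ there, using (\ref{eq: shape}) and (\ref{eq: struc13}) to get $-\cos^2\theta\,\eta([Z,W])=g(h(Z,\xi),FTW)-g(h(W,\xi),FTZ)=0$; you instead compute $g([Z,W],\xi)=0$ directly from $\overline{\nabla}_Z\xi$ before running the main identity, and then restrict to $X\in\Gamma(\overline{\mathcal{D}}_1)$. (Minor slip: $[Z,W]=\overline{\nabla}_ZW-\overline{\nabla}_WZ$, so drop the minus sign in your displayed equality; the conclusion is unaffected.)

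One small point to tidy up: because you restrict the main computation to $X\in\Gamma(\overline{\mathcal{D}}_1)$, you only obtain the equivalence of integrability with (\ref{eq: dist03}) for such $X$. To match the lemma as stated you should also note that (\ref{eq: dist03}) holds automatically for $X=\xi$: the right side vanishes since $\phi\xi=0$, and the left side equals $g(h(Z,\xi),FTW)-g(h(W,\xi),FTZ)$, which is zero because $\overline{\nabla}_Z\xi$ is tangent in both the cosymplectic and Kenmotsu cases. The paper's ordering sidesteps this by never leaving general $X\in\Gamma(\mathcal{D}_1)$.
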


\begin{proof}
We will show it when $N$ is Kenmotsu.

Given $X\in \Gamma(\mathcal{D}_1)$ and $Z,W\in \Gamma(\mathcal{D}_2)$, from the proof of Lemma \ref {lem: dist02}, we have
\begin{eqnarray}
\sin^2 \theta g([Z,W], X) &=& \eta([Z,W]) \eta(X) + g(A_{FTW} Z - A_{FTZ} W, X)   \label{eq: dist04}  \\
 & &\ + g(A_{FZ} W - A_{FW} Z, \phi X).   \nonumber
\end{eqnarray}
Replacing $X$ by $\xi$ at (\ref {eq: dist04}), by using (\ref {eq: shape}) and (\ref {eq: struc13}), we get
\begin{align*}
-\cos^2 \theta \eta([Z,W])
&= g(h(Z, \xi), FTW) - g(h(W, \xi), FTZ)   \\
&= g(\overline{\nabla}_Z \xi, FTW) - g(\overline{\nabla}_W \xi, FTZ)  \\
&= g(Z- \eta(Z)\xi, FTW) - g(W- \eta(W)\xi, FTZ)  \\
&= 0
\end{align*}
so that $\eta([Z,W]) = 0$.

Hence, the result follows.
\end{proof}

\begin{remark}
For the case when both $N$ is Sasakian and $\xi$ is tangent to $M$, confer Proposition 5.4 of \cite {CCFF0}.
\end{remark}

\begin{theorem}\label{thm: dist01}
Let $M$ be a proper pointwise semi-slant submanifold of an almost contact metric manifold $(N,\phi,\xi,\eta,g)$.
Assume that $\xi$ is tangent to $M$ and $N$ is one of the following three manifolds: cosymplectic, Sasakian, Kenmotsu.

Then the distribution $\mathcal{D}_1$ defines a totally geodesic foliation if and only if
\begin{equation}\label{eq: dist05}
g(A_{FZ} \phi X - A_{FTZ} X, Y) = 0
\end{equation}
for $X,Y\in \Gamma(\mathcal{D}_1)$ and $Z\in \Gamma(\mathcal{D}_2)$.
\end{theorem}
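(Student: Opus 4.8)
The plan is to characterize the totally geodesic foliation condition $\nabla_X Y \in \Gamma(\mathcal D_1)$ for $X,Y\in\Gamma(\mathcal D_1)$ by testing it against the complementary distribution. Since $\xi\in\Gamma(\mathcal D_1)$ by Remark \ref{rem: sslant01}(1) and $TM=\mathcal D_1\oplus\mathcal D_2$, the distribution $\mathcal D_1$ defines a totally geodesic foliation exactly when $g(\nabla_X Y, Z)=0$ for all $X,Y\in\Gamma(\mathcal D_1)$ and $Z\in\Gamma(\mathcal D_2)$. So the entire proof reduces to rewriting $g(\nabla_X Y, Z)$ (equivalently $g(\overline\nabla_X Y, Z)$, since the difference is normal) in terms of the shape operator applied to $FZ$ and $FTZ$.

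First I would write, for $X,Y\in\Gamma(\mathcal D_1)$ and $Z\in\Gamma(\mathcal D_2)$,
\begin{align*}
g(\overline\nabla_X Y, Z) &= g(\phi\overline\nabla_X Y, \phi Z) + \eta(\overline\nabla_X Y)\eta(Z) \\
&= g(\phi\overline\nabla_X Y, TZ + FZ),
\end{align*}
using $\eta(Z)=0$ because $Z\perp\xi$. Then I would move $\phi$ across using the appropriate structure equation — (\ref{eq: struc10}) in the Sasakian case, (\ref{eq: struc12}) in the Kenmotsu case, (\ref{eq: struc14}) in the cosymplectic case — noting that the extra scalar-multiple-of-$\xi$ and $\eta$-type terms coming from $(\overline\nabla_X\phi)Y$ vanish when paired against $TZ\in\Gamma(\mathcal D_2)$ and $FZ\in\Gamma(TM^\perp)$ since $Y\in\Gamma(\mathcal D_1)$ makes $\eta(Y)$ contribute only a $\mathcal D_1$ component and $g(\xi, TZ)=g(\xi,FZ)=0$. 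Thus $g(\overline\nabla_X Y, Z) = -g(\overline\nabla_X(\phi Y), TZ + FZ) + (\text{terms that die})$, and since $\phi Y\in\Gamma(\mathcal D_1)\subset\Gamma(TM)$ I can replace $\overline\nabla_X(\phi Y)$ by $\nabla_X(\phi Y)+h(X,\phi Y)$, while $TZ = T^2(\cdot)$-type manipulations via Remark \ref{rem: sslant02}(1) give $T^2 Z = -\cos^2\theta\, Z$. Running the same $g(\cdot,\cdot) = g(\phi\cdot,\phi\cdot)$ trick once more produces a $\cos^2\theta\, g(\overline\nabla_X Y, Z)$ term on the right, which I collect onto the left to get $\sin^2\theta\, g(\nabla_X Y, Z)$ on one side.

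The remaining terms on the right will involve $g(h(X,\phi Y), FZ)$, $g(h(X,\phi Y), FTZ)$ and similar, which by (\ref{eq: shape}) convert to $g(A_{FZ}\phi Y, X)$ and $g(A_{FTZ}Y, X)$ and the like; after symmetrizing in the bookkeeping, this should collapse to the stated condition $g(A_{FZ}\phi X - A_{FTZ}X, Y)=0$ (using symmetry of each $A_\bullet$ to swap $X$ and $Y$ freely, and that $\sin^2\theta\neq 0$ since $M$ is proper, to divide out). I would present the Kenmotsu computation in full and remark that the Sasakian and cosymplectic cases follow identically, exactly as in the proofs of Lemmas \ref{lem: dist01}–\ref{lem: dist03}. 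The main obstacle I anticipate is purely organizational: keeping track of which of the several $\xi$- and $\eta$-valued correction terms from the structure equation genuinely vanish against $TZ$, $FZ$, $\phi X$, and $\phi Y$ — in particular making sure that $\eta(\phi Y)=0$, $g(\xi,FZ)=0$, and $\eta(X)=\eta(Y)$ contributions are handled consistently — rather than any conceptual difficulty; the algebra is a direct parallel of Lemma \ref{lem: dist02}.
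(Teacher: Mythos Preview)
Your proposal is correct and follows essentially the same route as the paper: start from $g(\nabla_Y X,Z)=g(\phi\overline{\nabla}_Y X,\phi Z)$ (the $\eta$-term dies since $\eta(Z)=0$), split $\phi Z=TZ+FZ$, use the structure equation together with $T^2Z=-\cos^2\theta\,Z$ to produce a $\cos^2\theta\,g(\nabla_Y X,Z)$ term, and convert the remaining second fundamental form pieces into $g(A_{FZ}\phi X-A_{FTZ}X,Y)$; the paper likewise presents the Kenmotsu computation and remarks that the other cases are identical. One small caution: in your sketch the line ``$g(\overline\nabla_X Y,Z)=-g(\overline\nabla_X(\phi Y),TZ+FZ)+\cdots$'' has the wrong sign (since $\phi\overline\nabla_X Y=\overline\nabla_X(\phi Y)-(\overline\nabla_X\phi)Y$), but this is a bookkeeping slip, not a gap in the argument.
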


\begin{proof}
We will give its proof when $N$ is Kenmotsu.

Given $X,Y\in \Gamma(\mathcal{D}_1)$ and $Z\in \Gamma(\mathcal{D}_2)$, by using Remark \ref {rem: sslant01}, (\ref {eq: struc12}),
and  Remark \ref {rem: sslant02}, we obtain
\begin{align*}
&g(\nabla_Y X, Z)    \\
&= g(\phi \overline{\nabla}_Y X, \phi Z) + \eta(\overline{\nabla}_Y X) \eta(Z)  \\
&= g(\phi \overline{\nabla}_Y X, TZ + FZ)  \\
&= -g(\overline{\nabla}_Y X, T^2Z + FTZ)  \\
&\ + g(\overline{\nabla}_Y \phi X - (g(\phi Y, X)\xi - \eta(X) \phi Y), FZ)    \\
&= \cos^2 \theta g(\nabla_Y X, Z) - g(A_{FTZ} X, Y) + g(A_{FZ} \phi X, Y)
\end{align*}
so that
$$
\sin^2 \theta g(\nabla_Y X, Z) = g(A_{FZ} \phi X - A_{FTZ} X, Y).
$$
Therefore, we obtain the result.
\end{proof}

In the same way to Theorem \ref {thm: dist01}, we get

\begin{theorem}\label{thm: dist001}
Let $M$ be a proper pointwise semi-slant submanifold of an almost contact metric manifold $(N,\phi,\xi,\eta,g)$.
Assume that $\xi$ is normal to $M$ and $N$ is one of the following two manifolds: cosymplectic, Kenmotsu.

Then the distribution $\mathcal{D}_1$ defines a totally geodesic foliation if and only if
\begin{equation}\label{eq: dist005}
g(A_{FZ} \phi X - A_{FTZ} X, Y) = 0
\end{equation}
for $X,Y\in \Gamma(\mathcal{D}_1)$ and $Z\in \Gamma(\mathcal{D}_2)$.
\end{theorem}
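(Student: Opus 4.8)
The plan is to mirror the proof of Theorem \ref{thm: dist01} almost verbatim, since the only change in hypothesis is that $\xi$ is normal to $M$ rather than tangent. First I would recall that a distribution $\mathcal{D}_1$ on $M$ defines a totally geodesic foliation precisely when $\mathcal{D}_1$ is integrable and each leaf is totally geodesic in $M$, which is equivalent to $g(\nabla_Y X, Z) = 0$ for all $X, Y \in \Gamma(\mathcal{D}_1)$ and all $Z \in \Gamma(\mathcal{D}_2)$; so the whole statement reduces to computing this quantity. Since $\xi$ is normal to $M$, $\xi$ plays no role inside $TM$, and in particular $\eta(\overline{\nabla}_Y X)$ need not vanish but will be absorbed correctly through $(\ref{eq: struc4})$ and $(\ref{eq: struc5})$; the key simplification is that for $X, Y, Z$ all tangent to $M$ with $Z \in \Gamma(\mathcal{D}_2) \subset T^1 M$ when $\xi \perp M$, equation $(\ref{eq: sslant021})$ of Remark \ref{rem: sslant02}(1) gives $T^2 Z = -\cos^2\theta\, Z$ cleanly.

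The computation proceeds as follows. For $X, Y \in \Gamma(\mathcal{D}_1)$ and $Z \in \Gamma(\mathcal{D}_2)$, I would start from
\[
g(\nabla_Y X, Z) = g(\overline{\nabla}_Y X, Z) = g(\phi\overline{\nabla}_Y X, \phi Z) + \eta(\overline{\nabla}_Y X)\eta(Z),
\]
using $(\ref{eq: struc4})$. Since $\xi$ is normal to $M$ and $Z \in \Gamma(TM)$, we have $\eta(Z) = g(Z,\xi) = 0$, so the last term drops out entirely — this is actually cleaner than the tangent case. Then I would decompose $\phi Z = TZ + FZ$, move $\phi$ across $\overline{\nabla}_Y X$ (no correction term is needed when $N$ is cosymplectic since $\overline{\nabla}\phi = 0$, while for Kenmotsu one uses $(\ref{eq: struc12})$ and checks the extra terms $g(\phi Y, X)\xi - \eta(X)\phi Y$ contract against $FZ$ and $TZ$ appropriately), obtaining
\[
g(\nabla_Y X, Z) = -g(\overline{\nabla}_Y X, T^2 Z + FTZ) + g(\overline{\nabla}_Y \phi X, FZ)
\]
up to the Kenmotsu correction terms. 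Applying $T^2 Z = -\cos^2\theta\, Z$ and the Weingarten formula $(\ref{eq: weing})$ to rewrite $g(\overline{\nabla}_Y X, FTZ) = -g(A_{FTZ} X, Y)$ and $g(\overline{\nabla}_Y\phi X, FZ) = g(\overline{\nabla}_Y TX, FZ) + g(\overline{\nabla}_Y FX, FZ)$, where $FX = 0$ because $X \in \mathcal{D}_1$ is $\phi$-invariant, so this becomes $-g(A_{FZ}\phi X, Y)$ after noting $TX = \phi X$ on $\mathcal{D}_1$; collecting the $\cos^2\theta\, g(\nabla_Y X, Z)$ term on the left gives $\sin^2\theta\, g(\nabla_Y X, Z) = g(A_{FZ}\phi X - A_{FTZ} X, Y)$, and since $M$ is proper, $\sin^2\theta \neq 0$, so this vanishes identically if and only if $(\ref{eq: dist005})$ holds.

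I also need to address integrability of $\mathcal{D}_1$: a totally geodesic foliation in particular requires $\mathcal{D}_1$ integrable. But the identity $g(\nabla_Y X, Z) = 0$ for all $X, Y \in \Gamma(\mathcal{D}_1)$, $Z \in \Gamma(\mathcal{D}_2)$ implies $g([X,Y], Z) = g(\nabla_X Y - \nabla_Y X, Z) = 0$, so integrability follows automatically from the same equation, and the standard equivalence (see the analogous step in Theorem \ref{thm: dist01} and in Lemma \ref{lem: dist001}) closes the argument — no separate verification is needed. The main obstacle, such as it is, is bookkeeping the correction terms in the Kenmotsu case: one must verify that $g(g(\phi Y,X)\xi - \eta(X)\phi Y, FZ) = 0$, which holds because $\xi \perp M$ forces $g(\xi, FZ)$ to land outside... actually $FZ \in TM^\perp$ and $\xi \in TM^\perp$, so one must instead check that the $\xi$-component cancels against the $\eta(\overline{\nabla}_Y X)\eta(Z)$ term, but since $\eta(Z) = 0$ that term is already gone, so one genuinely needs $g(\phi Y, X)\eta(FZ) = \eta(X) g(\phi Y, FZ)$; here $g(\phi Y, FZ) = g(TY + FY, FZ) = g(FY, FZ) = 0$ since $Y \in \mathcal{D}_1$, and $\eta(X)$ is not necessarily zero but is multiplied by zero — so the term vanishes and the computation goes through. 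This is the one spot requiring care, and it is the analog of the delicate step in Theorem \ref{thm: dist001}'s relatives; otherwise the proof is routine and I would simply write "in the same way to Theorem \ref{thm: dist01}" for the cosymplectic case and give the Kenmotsu computation in full as above.
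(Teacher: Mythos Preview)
Your approach is exactly the paper's: the paper proves this theorem by the single sentence ``In the same way to Theorem \ref{thm: dist01}'', and your computation is precisely the Kenmotsu case of that argument carried out with $\xi$ normal, arriving at $\sin^2\theta\, g(\nabla_Y X, Z) = g(A_{FZ}\phi X - A_{FTZ} X, Y)$. One small correction to your bookkeeping: since $\xi$ is normal to $M$ and $X \in \Gamma(\mathcal{D}_1) \subset \Gamma(TM)$, you have $\eta(X) = g(X,\xi) = 0$ directly, so the term $\eta(X)\,g(\phi Y, FZ)$ vanishes for that reason (not because the other factor is zero); similarly $\eta(FZ) = g(\phi Z - TZ, \xi) = 0$ since $\phi\xi = 0$, so both Kenmotsu correction terms drop out immediately and there is no delicate step at all.
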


\begin{theorem}\label{thm: dist02}
Let $M$ be a proper pointwise semi-slant submanifold of an almost contact metric manifold $(N,\phi,\xi,\eta,g)$.
Assume that $\xi$ is normal to $M$ and $N$ is one of the following two manifolds: cosymplectic, Kenmotsu.

Then the distribution $\mathcal{D}_2$ defines a totally geodesic foliation if and only if
\begin{equation}\label{eq: dist06}
g(A_{FZ} \phi X - A_{FTZ} X, W) = 0
\end{equation}
for $X\in \Gamma(\mathcal{D}_1)$ and $Z,W\in \Gamma(\mathcal{D}_2)$.
\end{theorem}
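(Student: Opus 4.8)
The plan is to reduce the statement to a single tensorial identity and then compute. Since $\xi$ is normal to $M$, we have $TM=\mathcal{D}_1\oplus\mathcal{D}_2$ and $\eta$ vanishes on $TM$; consequently $\mathcal{D}_2$ defines a totally geodesic foliation if and only if $g(\nabla_Z W,X)=0$ for all $Z,W\in\Gamma(\mathcal{D}_2)$ and $X\in\Gamma(\mathcal{D}_1)$. Note that this one condition already forces integrability of $\mathcal{D}_2$, since $[Z,W]=\nabla_Z W-\nabla_W Z$, so there is nothing extra to check. So I would fix $Z,W\in\Gamma(\mathcal{D}_2)$ and $X\in\Gamma(\mathcal{D}_1)$ and evaluate $g(\nabla_Z W,X)=g(\overline{\nabla}_Z W,X)$.

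I would carry this out in the Kenmotsu case and remark that the cosymplectic case is identical with $\overline{\nabla}\phi=0$; in the Kenmotsu case every term of the form $(\overline{\nabla}_{\cdot}\phi)(\cdot)$ that arises is, by \eqref{eq: struc12} together with $\eta|_{TM}=0$, a multiple of $\xi$, hence is killed upon pairing with a tangent vector. Using the compatibility relation \eqref{eq: struc4} and $\eta(X)=0$, write $g(\overline{\nabla}_Z W,X)=g(\phi\overline{\nabla}_Z W,\phi X)$, then move $\phi$ onto the vector field via $\phi\overline{\nabla}_Z W=\overline{\nabla}_Z(\phi W)-(\overline{\nabla}_Z\phi)W$ and split $\phi W=TW+FW$ with $TW\in\Gamma(\mathcal{D}_2)$ by \eqref{eq: sslant06}. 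Applying the Gauss and Weingarten formulas \eqref{eq: gauss}, \eqref{eq: weing} and discarding the normal components against the tangent vector $\phi X$ leaves $g(\nabla_Z W,X)=g(\overline{\nabla}_Z(TW),\phi X)-g(A_{FW}Z,\phi X)$. Now repeat the manoeuvre on $g(\overline{\nabla}_Z(TW),\phi X)$: compatibility together with $\eta(\phi X)=0$ (from \eqref{eq: struc3}) gives $g(\overline{\nabla}_Z(TW),\phi X)=-g(\phi\overline{\nabla}_Z(TW),X)$, and here $\phi TW=T^2W+FTW=-\cos^2\theta\,W+FTW$ by Remark \ref{rem: sslant02}(1) — which applies precisely because $\xi$ is normal to $M$. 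Differentiating, the $W$-term contributes $\cos^2\theta\,g(\nabla_Z W,X)$ (the remaining pieces being killed by orthogonality of $\mathcal{D}_1$ and $\mathcal{D}_2$, or by normality), and the $FTW$-term contributes an $A_{FTW}$-term via Weingarten.

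Collecting everything and absorbing the $\cos^2\theta\,g(\nabla_Z W,X)$ term onto the left yields
\[
\sin^2\theta\,g(\nabla_Z W,X)=g(A_{FTW}Z,X)-g(A_{FW}Z,\phi X).
\]
By \eqref{eq: shape} and the symmetry of $h$ one has $g(A_{FTW}Z,X)=g(A_{FTW}X,Z)$ and $g(A_{FW}Z,\phi X)=g(A_{FW}\phi X,Z)$, so the right-hand side equals $-g(A_{FW}\phi X-A_{FTW}X,Z)$. Since $M$ is proper, $\sin^2\theta$ is nowhere zero, so $\mathcal{D}_2$ defines a totally geodesic foliation if and only if $g(A_{FW}\phi X-A_{FTW}X,Z)=0$ for all $X\in\Gamma(\mathcal{D}_1)$ and $Z,W\in\Gamma(\mathcal{D}_2)$; interchanging the dummy fields $Z$ and $W$ gives exactly \eqref{eq: dist06}. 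The only genuine work is the bookkeeping of which terms vanish, exactly as in Lemma \ref{lem: dist02} and Theorem \ref{thm: dist01}; no new idea is needed, and the one step to watch is the legitimacy of replacing $T^2$ by $-\cos^2\theta\,I$ on $\mathcal{D}_2$, which is guaranteed by Remark \ref{rem: sslant02}(1) under the hypothesis that $\xi$ is normal to $M$.
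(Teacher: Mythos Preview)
Your proof is correct and is essentially the same computation as the paper's: both use the compatibility relation, expand $\phi\overline{\nabla}$ via the covariant derivative of $\phi$, split $\phi W=TW+FW$ (the paper splits $\phi Z$ instead, having started from $g(\nabla_W Z,X)$ rather than $g(\nabla_Z W,X)$), iterate once using $T^2=-\cos^2\theta\,I$ on $\mathcal{D}_2$, and conclude by properness. The only cosmetic difference is that the roles of $Z$ and $W$ are interchanged, which you undo at the end.
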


\begin{proof}
We give its proof when $N$ is Kenmotsu.

Given $X\in \Gamma(\mathcal{D}_1)$ and $Z,W\in \Gamma(\mathcal{D}_2)$, by using (\ref {eq: struc12}) and Remark \ref {rem: sslant02},  we get
\begin{align*}
&g(\nabla_W Z, X)    \\
&= g(\phi \overline{\nabla}_W Z, \phi X) + \eta(\overline{\nabla}_W Z) \eta(X)  \\
&= g(\overline{\nabla}_W (TZ + FZ) - (g(\phi W, Z)\xi - \eta(Z)\phi W), \phi X)  \\
&= -g(\overline{\nabla}_W (T^2 Z + FTZ) - (g(\phi W, TZ)\xi - \eta(TZ)\phi W), X) - g(A_{FZ} W, \phi X)  \\
&= \cos^2 \theta g(\nabla_W Z, X) + g(A_{FTZ} W, X) - g(A_{FZ} W, \phi X)
\end{align*}
so that
$$
\sin^2 \theta g(\nabla_W Z, X) = g(A_{FTZ} X - A_{FZ} \phi X, W).
$$
Therefore, the result follows.
\end{proof}

In a similar way, we have

\begin{theorem}\label{thm: dist03}
Let $M$ be a proper pointwise semi-slant submanifold of an almost contact metric manifold $(N,\phi,\xi,\eta,g)$.
Assume that $\xi$ is tangent to $M$
\begin{enumerate}
\item If $N$ is one of the following two manifolds: cosymplectic, Sasakian, then $\mathcal{D}_2$ defines a totally geodesic foliation if and only if
\begin{equation}\label{eq: dist07}
g(A_{FZ} \phi X - A_{FTZ} X, W) = 0
\end{equation}
for $X\in \Gamma(\mathcal{D}_1)$ and $Z,W\in \Gamma(\mathcal{D}_2)$.

\item If $N$ is Kenmotsu, then $\mathcal{D}_2$ defines a totally geodesic foliation if and only if
\begin{equation}\label{eq: dist08}
g(A_{FZ} \phi X - A_{FTZ} X, W) + \sin^2 \theta \eta(X) g(W, Z) = 0
\end{equation}
for $X\in \Gamma(\mathcal{D}_1)$ and $Z,W\in \Gamma(\mathcal{D}_2)$.
\end{enumerate}
\end{theorem}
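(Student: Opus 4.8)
The plan is to compute $g(\nabla_W Z, X)$ directly for $X \in \Gamma(\mathcal{D}_1)$ and $Z, W \in \Gamma(\mathcal{D}_2)$, exactly as in the proof of Theorem \ref{thm: dist02}, but now keeping track of the extra terms that arise because $\xi$ is tangent to $M$ rather than normal. For part (1), when $N$ is cosymplectic, $\overline{\nabla}\phi = 0$ and the $\xi$-correction terms $g(\phi W, Z)\xi$ and $g(\phi W, TZ)\xi$ contribute only $\eta(X)$-multiples; but since $\xi \in \Gamma(\mathcal{D}_1)$ one can check these contributions cancel against $\eta(\overline{\nabla}_W Z)\eta(X)$ using $\overline{\nabla}\xi = 0$, so the same formula $g(A_{FZ}\phi X - A_{FTZ} X, W) = 0$ characterizes the totally geodesic foliation condition. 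For $N$ Sasakian, I would use $(\overline{\nabla}_X \phi)Y = g(X,Y)\xi - \eta(Y)X$ together with $\overline{\nabla}_X \xi = -\phi X$; the key point is that the terms involving $g(Z,W)\xi$ and $\eta$ of vectors in $\mathcal{D}_2$ vanish because $\mathcal{D}_2 \subset \ker\eta$ (Remark \ref{rem: sslant01}(4) gives $\xi \notin \mathcal{D}_2$, hence $\eta|_{\mathcal{D}_2} = 0$), so again no new term survives.

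For part (2), with $N$ Kenmotsu, I expect the computation to mirror the proof of Theorem \ref{thm: dist02} but with the substitution $\overline{\nabla}_W \xi = W - \eta(W)\xi$ in place of the cosymplectic $\overline{\nabla}\xi = 0$. Concretely, I would write
\begin{align*}
g(\nabla_W Z, X) &= g(\phi\overline{\nabla}_W Z, \phi X) + \eta(\overline{\nabla}_W Z)\eta(X) \\
&= -g(\overline{\nabla}_W(T^2 Z + FTZ), X) - g(A_{FZ} W, \phi X) + (\text{$\xi$-terms}) + \eta(\overline{\nabla}_W Z)\eta(X),
\end{align*}
using (\ref{eq: sslant021}) to replace $T^2 Z$ by $-\cos^2\theta\, Z$. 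The new feature is the term $\eta(\overline{\nabla}_W Z)\eta(X)$: since $Z \in \mathcal{D}_2 \subset \ker\eta$, we have $\eta(\overline{\nabla}_W Z) = W\eta(Z) - (\overline{\nabla}_W \eta)(Z) = -(\overline{\nabla}_W\eta)(Z)$, and for Kenmotsu $\overline{\nabla}_W\eta = g(W, \cdot) - \eta(W)\eta(\cdot)$ by dualizing (\ref{eq: struc13}), giving $\eta(\overline{\nabla}_W Z) = -g(W, Z)$. This produces precisely the extra summand $-g(W,Z)\eta(X)$, and after collecting the $\cos^2\theta\, g(\nabla_W Z, X)$ term on the left and dividing through, one should land on $\sin^2\theta\, g(\nabla_W Z, X) = g(A_{FTZ} X - A_{FZ}\phi X, W) - \sin^2\theta\,\eta(X) g(W, Z)$, which is equivalent to (\ref{eq: dist08}).

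The main obstacle will be bookkeeping the $\xi$-valued correction terms from the Kenmotsu identity $(\overline{\nabla}_X\phi)Y = g(\phi X, Y)\xi - \eta(Y)\phi X$ and verifying that every $\eta$ evaluated on a $\mathcal{D}_2$-vector genuinely vanishes — this relies on Remark \ref{rem: sslant01}(1) and (4), which guarantee $\xi \in \mathcal{D}_1$ when tangent, hence $\eta$ annihilates $\mathcal{D}_2$. A secondary subtlety is keeping the sign conventions consistent between $g(A_{FTZ} X - A_{FZ}\phi X, W)$ and $g(A_{FZ}\phi X - A_{FTZ} X, W)$; I would fix signs at the end by comparing with the cosymplectic case, where part (1) must reduce (\ref{eq: dist08}) to (\ref{eq: dist07}) upon setting the Kenmotsu-specific term to zero. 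Once the $\eta$-annihilation is established and (\ref{eq: struc14}) resp. (\ref{eq: struc13}) are substituted at the right places, both parts follow by the same algebraic rearrangement used throughout Section \ref{dist}.
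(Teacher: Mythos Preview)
Your overall strategy matches the paper's, but there is a genuine gap in the Sasakian case of part (1), and a related oversight in the Kenmotsu case.

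You assert that for $N$ Sasakian ``no new term survives'' because $\eta|_{\mathcal{D}_2}=0$. This is not enough. When you unwind $g(\phi\overline{\nabla}_W Z,\phi X)$ you apply the Sasakian identity twice: once to $Z$ and once to $TZ$. The first application contributes $g(W,Z)\,g(\xi,\phi X)=0$, as you say. But the second gives $(\overline{\nabla}_W\phi)(TZ)=g(W,TZ)\xi-\eta(TZ)W$, and this $\xi$-term is paired with $X$, not with $\phi X$; it therefore yields $g(W,TZ)\,\eta(X)$, which does \emph{not} vanish just because $\eta(Z)=\eta(TZ)=0$. Together with the $\eta(\nabla_W Z)\,\eta(X)$ term from the initial identity $g(U,V)=g(\phi U,\phi V)+\eta(U)\eta(V)$, you are left with
\[
\sin^2\theta\, g(\nabla_W Z,X)=g(A_{FTZ}X-A_{FZ}\phi X,W)+g(W,TZ)\,\eta(X)+\eta(\nabla_W Z)\,\eta(X),
\]
and the two extra terms must be shown to cancel. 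The paper does this by substituting $X=\xi$, which (using $\overline{\nabla}_W\xi=-\phi W$ and (\ref{eq: sslant023})) gives $\eta(\nabla_W Z)=-g(W,TZ)$. Your direct method via $(\overline{\nabla}_W\eta)(Z)=g(Z,\overline{\nabla}_W\xi)$ would also yield this, but you never invoke it for the Sasakian case.

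The same oversight affects your Kenmotsu computation: the second Kenmotsu correction $(\overline{\nabla}_W\phi)(TZ)=g(\phi W,TZ)\xi$ contributes $g(\phi W,TZ)\,\eta(X)=\cos^2\theta\,g(W,Z)\,\eta(X)$ (by (\ref{eq: sslant022})), which you have buried in ``$\xi$-terms'' and then assumed away. Only after combining this with your correctly computed $\eta(\overline{\nabla}_W Z)\,\eta(X)=-g(W,Z)\,\eta(X)$ do you obtain the $-\sin^2\theta\,g(W,Z)\,\eta(X)$ in (\ref{eq: dist08}); without it you would land on $-g(W,Z)\,\eta(X)$ instead. So the bookkeeping you flag as ``the main obstacle'' really is the crux, and it does not reduce to checking $\eta|_{\mathcal{D}_2}=0$.
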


\begin{proof}
We only give its proof when $N$ is Sasakian. For the other cases, we can show them in the same way.

Given $X\in \Gamma(\mathcal{D}_1)$ and $Z,W\in \Gamma(\mathcal{D}_2)$, by using (\ref {eq: struc10}) and Remark \ref {rem: sslant02},  we obtain
\begin{align*}
&g(\nabla_W Z, X)    \\
&= g(\phi \overline{\nabla}_W Z, \phi X) + \eta(\overline{\nabla}_W Z) \eta(X)  \\
&= g(\overline{\nabla}_W (TZ + FZ) - (g(W, Z)\xi - \eta(Z)W), \phi X) + \eta(\nabla_W Z) \eta(X)  \\
&= -g(\overline{\nabla}_W (T^2 Z + FTZ) - (g(W, TZ)\xi - \eta(TZ)W), X)    \\
&\ - g(A_{FZ} W, \phi X) + \eta(\nabla_W Z) \eta(X) \\
&= \cos^2 \theta g(\nabla_W Z, X) + g(A_{FTZ} W, X) + g(W, TZ)\eta(X)  \\
&\ - g(A_{FZ} W, \phi X) + \eta(\nabla_W Z) \eta(X)
\end{align*}
so that
\begin{eqnarray}
\sin^2 \theta g(\nabla_W Z, X) &=& g(A_{FTZ} X - A_{FZ} \phi X, W)  \label{eq: dist08}  \\
& &+ g(W, TZ)\eta(X) + \eta(\nabla_W Z) \eta(X).  \nonumber
\end{eqnarray}
Replacing $X$ by $\xi$ at (\ref {eq: dist08}), we get
$$
\sin^2 \theta \eta(\nabla_W Z) = g(h(W, \xi), FTZ) + g(W, TZ) + \eta(\nabla_W Z)
$$
so that by using (\ref {eq: struc11}) and Remark \ref {rem: sslant02},
\begin{align*}
-\cos^2 \theta \eta(\nabla_W Z)
&= g(\overline{\nabla}_W \xi, FTZ) + g(W, TZ)   \\
&= g(-\phi W, FTZ) + g(W, TZ)  \\
&= -\sin^2 \theta g(W, TZ) + g(W, TZ)  \\
&= \cos^2 \theta g(W, TZ),
\end{align*}
which implies $\eta(\nabla_W Z) = -g(W, TZ)$.

Hence, from (\ref {eq: dist08}),
$$
\sin^2 \theta g(\nabla_W Z, X) = g(A_{FTZ} X - A_{FZ} \phi X, W).
$$
Therefore, the result follows.
\end{proof}

Using Theorem \ref {thm: dist01} and Theorem \ref {thm: dist03}, we obtain

\begin{corollary}
Let $M$ be a proper pointwise semi-slant submanifold of an almost contact metric manifold $(N,\phi,\xi,\eta,g)$.
Assume that  $\xi$ is tangent to $M$ and $N$ is one of the following two manifolds: cosymplectic, Sasakian.

Then $M$ is locally a Riemannian product manifold of $M_1$ and $M_2$ if and only if
\begin{equation}\label{eq: dist09}
A_{FZ} \phi X = A_{FTZ} X
\end{equation}
for $X\in \Gamma(\mathcal{D}_1)$ and $Z\in \Gamma(\mathcal{D}_2)$,
where $M_1$ and $M_2$ are integral manifolds of $\mathcal{D}_1$ and $\mathcal{D}_2$, respectively.
\end{corollary}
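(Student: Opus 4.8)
The plan is to combine the two foliation results already established: Theorem~\ref{thm: dist01} characterizes when $\mathcal{D}_1$ defines a totally geodesic foliation, and Theorem~\ref{thm: dist03}(1) does the same for $\mathcal{D}_2$ (since here $\xi$ is tangent and $N$ is cosymplectic or Sasakian). By the local de~Rham decomposition theorem, $M$ is locally a Riemannian product of integral manifolds $M_1$ of $\mathcal{D}_1$ and $M_2$ of $\mathcal{D}_2$ if and only if \emph{both} distributions are integrable and both define totally geodesic foliations. So the whole proof reduces to showing that the single condition \eqref{eq: dist09}, namely $A_{FZ}\phi X = A_{FTZ}X$ for all $X\in\Gamma(\mathcal{D}_1)$, $Z\in\Gamma(\mathcal{D}_2)$, is equivalent to the conjunction of \eqref{eq: dist05} and \eqref{eq: dist07}.

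First I would observe that \eqref{eq: dist09} immediately implies \eqref{eq: dist05}: taking the inner product of $A_{FZ}\phi X - A_{FTZ}X = 0$ with any $Y\in\Gamma(\mathcal{D}_1)$ gives $g(A_{FZ}\phi X - A_{FTZ}X, Y)=0$, which is exactly \eqref{eq: dist05}, so $\mathcal{D}_1$ is a totally geodesic foliation (and in particular integrable). Likewise, pairing $A_{FZ}\phi X - A_{FTZ}X=0$ with any $W\in\Gamma(\mathcal{D}_2)$ yields $g(A_{FZ}\phi X - A_{FTZ}X, W)=0$, which is \eqref{eq: dist07}, so $\mathcal{D}_2$ is a totally geodesic foliation as well; by Theorem~\ref{thm: dist01} and Theorem~\ref{thm: dist03}(1) both conditions hold, hence $M$ is locally the claimed Riemannian product.

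Conversely, if $M$ is locally a Riemannian product of $M_1$ and $M_2$, then both $\mathcal{D}_1$ and $\mathcal{D}_2$ define totally geodesic foliations, so \eqref{eq: dist05} and \eqref{eq: dist07} both hold. The key point is then that $A_{FZ}\phi X - A_{FTZ}X$ is a vector tangent to $M$, and $TM = \mathcal{D}_1\oplus\mathcal{D}_2$; its $\mathcal{D}_1$-component vanishes by \eqref{eq: dist05} (testing against all $Y\in\Gamma(\mathcal{D}_1)$) and its $\mathcal{D}_2$-component vanishes by \eqref{eq: dist07} (testing against all $W\in\Gamma(\mathcal{D}_2)$), whence $A_{FZ}\phi X - A_{FTZ}X = 0$, which is \eqref{eq: dist09}. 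One small check needed along the way is that these expressions do indeed only have components in $\mathcal{D}_1$ and $\mathcal{D}_2$ and no component along $\xi$ — but since $\xi\in\Gamma(\mathcal{D}_1)$ by Remark~\ref{rem: sslant01}(1), the $\xi$-direction is subsumed in the $\mathcal{D}_1$-testing, so nothing extra is required.

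The main (and only) obstacle is the appeal to the local de~Rham-type decomposition: one must verify that ``both $\mathcal{D}_i$ integrable and both totally geodesic foliations'' is genuinely equivalent to ``$M$ locally a Riemannian product of leaves'', which is a standard fact (a leaf through each point is totally geodesic and the tangent bundle splits orthogonally and parallelly along complementary leaf directions). Everything else is the elementary linear-algebra observation that a tangent vector vanishes iff all its inner products against a spanning set of $TM=\mathcal{D}_1\oplus\mathcal{D}_2$ vanish, together with a direct citation of the two previously proved theorems. I would present the argument in the cosymplectic/Sasakian unified form exactly as those theorems are stated, so no case split beyond what they already handle is needed.
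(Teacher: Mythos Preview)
Your proposal is correct and follows exactly the paper's approach: the paper simply states ``Using Theorem~\ref{thm: dist01} and Theorem~\ref{thm: dist03}, we obtain'' and leaves the details implicit, while you spell out the standard argument that \eqref{eq: dist09} is equivalent to the conjunction of \eqref{eq: dist05} and \eqref{eq: dist07} via the orthogonal splitting $TM=\mathcal{D}_1\oplus\mathcal{D}_2$, together with the local de~Rham decomposition. Nothing is missing or different.
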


Using Theorem \ref {thm: dist001} and Theorem \ref {thm: dist02}, we also obtain

\begin{corollary}
Let $M$ be a proper pointwise semi-slant submanifold of an almost contact metric manifold $(N,\phi,\xi,\eta,g)$.
Assume that  $\xi$ is normal to $M$ and $N$ is one of the following two manifolds: cosymplectic, Kenmotsu.

Then $M$ is locally a Riemannian product manifold of $M_1$ and $M_2$ if and only if
\begin{equation}\label{eq: dist009}
A_{FZ} \phi X = A_{FTZ} X
\end{equation}
for $X\in \Gamma(\mathcal{D}_1)$ and $Z\in \Gamma(\mathcal{D}_2)$,
where $M_1$ and $M_2$ are integral manifolds of $\mathcal{D}_1$ and $\mathcal{D}_2$, respectively.
\end{corollary}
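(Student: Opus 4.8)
The plan is to prove the last corollary by combining Theorem~\ref{thm: dist001} and Theorem~\ref{thm: dist02}, exactly in the way the two prior corollaries are assembled from their respective pairs of theorems. Recall that a submanifold carrying two complementary orthogonal distributions $\mathcal{D}_1, \mathcal{D}_2$ with $TM = \mathcal{D}_1 \oplus \mathcal{D}_2$ is locally a Riemannian product of integral manifolds of $\mathcal{D}_1$ and $\mathcal{D}_2$ precisely when \emph{both} distributions are integrable and each defines a totally geodesic foliation. Since a totally geodesic foliation is in particular integrable, this reduces to: $\mathcal{D}_1$ defines a totally geodesic foliation and $\mathcal{D}_2$ defines a totally geodesic foliation. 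So the strategy is to show that the single condition $A_{FZ}\phi X = A_{FTZ}X$ for all $X \in \Gamma(\mathcal{D}_1)$, $Z \in \Gamma(\mathcal{D}_2)$ is equivalent to the conjunction of the characterizing conditions from the two theorems.

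First I would invoke Theorem~\ref{thm: dist001}: under the hypotheses ($\xi$ normal to $M$, $N$ cosymplectic or Kenmotsu), $\mathcal{D}_1$ defines a totally geodesic foliation if and only if $g(A_{FZ}\phi X - A_{FTZ}X, Y) = 0$ for all $X,Y \in \Gamma(\mathcal{D}_1)$ and $Z \in \Gamma(\mathcal{D}_2)$. Next I would invoke Theorem~\ref{thm: dist02}: under the same hypotheses, $\mathcal{D}_2$ defines a totally geodesic foliation if and only if $g(A_{FZ}\phi X - A_{FTZ}X, W) = 0$ for all $X \in \Gamma(\mathcal{D}_1)$ and $Z,W \in \Gamma(\mathcal{D}_2)$. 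The key observation is that the vector $A_{FZ}\phi X - A_{FTZ}X$ is a tangent vector field on $M$, so $TM = \mathcal{D}_1 \oplus \mathcal{D}_2$ means it vanishes identically if and only if its $\mathcal{D}_1$-component and its $\mathcal{D}_2$-component both vanish; testing against arbitrary $Y \in \Gamma(\mathcal{D}_1)$ kills the first, and testing against arbitrary $W \in \Gamma(\mathcal{D}_2)$ kills the second. Hence the two pointwise orthogonality conditions taken together are exactly equivalent to $A_{FZ}\phi X - A_{FTZ}X = 0$, i.e.\ to \eqref{eq: dist009}.

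Assembling: if \eqref{eq: dist009} holds, then both theorems' conditions are satisfied, so both $\mathcal{D}_1$ and $\mathcal{D}_2$ define totally geodesic foliations, and by the de~Rham-type local decomposition theorem $M$ is locally the Riemannian product of the integral manifolds $M_1$ of $\mathcal{D}_1$ and $M_2$ of $\mathcal{D}_2$. Conversely, if $M$ is locally such a product, then each $\mathcal{D}_i$ is parallel, hence defines a totally geodesic foliation, so each theorem's condition holds, and their conjunction forces \eqref{eq: dist009}. There is essentially no computational obstacle here — the previous corollaries already carry out this packaging move verbatim for the $\xi$-tangent case — so the only point requiring a line of care is the elementary linear-algebra remark that a tangent vector orthogonal to all of $\mathcal{D}_1$ and to all of $\mathcal{D}_2$ is zero, together with a pointer to the standard fact (used throughout the section) that a Riemannian manifold splits locally as a product iff it admits two complementary totally geodesic foliations.

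\begin{proof}
Recall that $TM = \mathcal{D}_1 \oplus \mathcal{D}_2$ is an orthogonal decomposition, so for any $V\in\Gamma(TM)$ we have $V = 0$ if and only if $g(V,Y)=0$ for all $Y\in\Gamma(\mathcal{D}_1)$ and $g(V,W)=0$ for all $W\in\Gamma(\mathcal{D}_2)$. Apply this to the tangent vector field $V := A_{FZ}\phi X - A_{FTZ}X$, where $X\in\Gamma(\mathcal{D}_1)$ and $Z\in\Gamma(\mathcal{D}_2)$. By Theorem \ref{thm: dist001}, the distribution $\mathcal{D}_1$ defines a totally geodesic foliation if and only if $g(V,Y)=0$ for all $X,Y\in\Gamma(\mathcal{D}_1)$ and $Z\in\Gamma(\mathcal{D}_2)$; by Theorem \ref{thm: dist02}, the distribution $\mathcal{D}_2$ defines a totally geodesic foliation if and only if $g(V,W)=0$ for all $X\in\Gamma(\mathcal{D}_1)$ and $Z,W\in\Gamma(\mathcal{D}_2)$. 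Hence both $\mathcal{D}_1$ and $\mathcal{D}_2$ define totally geodesic foliations if and only if $V=0$ for all such $X,Z$, i.e.\ if and only if \eqref{eq: dist009} holds.

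Now $M$ is locally a Riemannian product manifold of integral manifolds $M_1$ of $\mathcal{D}_1$ and $M_2$ of $\mathcal{D}_2$ if and only if both $\mathcal{D}_1$ and $\mathcal{D}_2$ are integrable and each defines a totally geodesic foliation. Since a totally geodesic foliation is in particular integrable, this condition is equivalent to the statement that both $\mathcal{D}_1$ and $\mathcal{D}_2$ define totally geodesic foliations, which by the previous paragraph is equivalent to \eqref{eq: dist009}. Therefore the result follows.
\end{proof}
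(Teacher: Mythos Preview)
Your proof is correct and follows exactly the paper's approach: the corollary is stated in the paper as an immediate consequence of Theorem~\ref{thm: dist001} and Theorem~\ref{thm: dist02}, with no further argument given. You have simply spelled out the elementary packaging step (that $A_{FZ}\phi X - A_{FTZ}X$ vanishes iff it is orthogonal to both $\mathcal{D}_1$ and $\mathcal{D}_2$) and the appeal to the local de~Rham decomposition, which the paper leaves implicit.
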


Let $M$ be a submanifold of a Riemannian manifold $(N, g)$. We call $M$ a {\em totally umbilic submanifold} of $(N, g)$ if
\begin{equation}\label{eq: dist010}
h(X, Y) = g(X, Y)H \quad \text{for} \ X,Y\in \Gamma(TM),
\end{equation}
where $H$ is the mean curvature vector field of $M$ in $N$.

\begin{lemma}\label{lem: dist04}
Let $M$ be a pointwise semi-slant totally umbilic submanifold of an almost contact metric manifold $(N,\phi,\xi,\eta,g)$.
Assume that $\xi$ is tangent to $M$ and $N$ is one of the following three manifolds: cosymplectic, Sasakian, Kenmotsu.

Then
\begin{equation}\label{eq: dist011}
H\in \Gamma(F\mathcal{D}_2).
\end{equation}
\end{lemma}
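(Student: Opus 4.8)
The plan is to show that the mean curvature vector $H$ has no component in $\mu$ and no component in $F\mathcal{D}_1$; since $TM^\perp = F\mathcal{D}_2 \oplus \mu$ and $F\mathcal{D}_1 = 0$ (by \eqref{eq: sslant06}), this will force $H \in \Gamma(F\mathcal{D}_2)$. So the real content is killing the $\mu$-component. First I would fix the case $N$ Sasakian (the other two being analogous, as the paper repeatedly does) and take a unit vector field $Z \in \Gamma(\mathcal{D}_2)$. Using the totally umbilic hypothesis $h(X,Y) = g(X,Y)H$, one gets $h(Z,Z) = H$ and $h(\sec\theta\, TZ, \sec\theta\, TZ) = H$ as well (since $\sec\theta\, TZ$ is also a unit vector in $\mathcal{D}_2$ by \eqref{eq: sslant022}), so in particular $h(Z,Z) = h(TZ,TZ)\sec^2\theta$, and similarly $h(Z, TZ) = g(Z,TZ)H = 0$.

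The key computation is to feed $Y = Z \in \Gamma(\mathcal{D}_2)$ into the structure equations. Starting from $\overline{\nabla}_Z(\phi Z) = (\overline{\nabla}_Z\phi)Z + \phi\overline{\nabla}_Z Z$ and expanding both sides via Gauss--Weingarten \eqref{eq: gauss}--\eqref{eq: weing}, the tangent decomposition \eqref{eq: sslant03}, and the Sasakian identity \eqref{eq: struc10}, I would extract the normal component. Writing $\phi Z = TZ + FZ$, the left side contributes $h(Z, TZ) + D_Z FZ$, while the right side contributes $\eta(Z)(\cdots) + F\nabla_Z Z + fh(Z,Z)$ plus terms in $t(\cdot)$. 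Since $\xi \in \Gamma(\mathcal{D}_1)$, we have $\eta(Z) = 0$, so comparing $\mu$-components and using that $F\nabla_Z Z \in \Gamma(F\mathcal{D}_2)$ isolates the relation between the $\mu$-part of $D_Z FZ$ and $fh(Z,Z) = fH$. Alternatively, and more cleanly, I would pair $h(Z,Z) = H$ against an arbitrary $V \in \Gamma(\mu)$: compute $g(h(Z,Z), V) = g(\overline{\nabla}_Z Z, V) = -g(Z, \overline{\nabla}_Z V)$, then use $\phi V \in \Gamma(\mu)$ (by Proposition~\ref{prop: sslant01}, which applies since $\mathcal{D}_2 \subset \ker\eta$ here) together with $g(\phi Z, \phi V) = g(Z,V) - \eta(Z)\eta(V) = 0$ to convert $\overline{\nabla}_Z V$ into terms that can be rewritten via $(\overline{\nabla}_Z\phi)$ and collapsed. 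The $\eta(Z) = 0$ input makes the Sasakian correction term $g(Z,V)\xi - \eta(V)Z$ harmless on the relevant pairings.

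The step I expect to be the main obstacle is bookkeeping the $\mu$ versus $F\mathcal{D}_2$ split carefully: one must verify that every term produced other than $fH$ itself genuinely lies in $F\mathcal{D}_2$ (so that projecting onto $\mu$ annihilates it) and that $f$ restricted to $F\mathcal{D}_2$ lands back in $F\mathcal{D}_2$ while $fV \in \mu$ for $V \in \mu$ — this is exactly where $\phi$-invariance of $\mu$ from Proposition~\ref{prop: sslant01} earns its keep. Once $g(H, V) = 0$ for all $V \in \Gamma(\mu)$ is established, combined with $g(H, FX) = g(H, 0) = 0$ trivially for $X \in \Gamma(\mathcal{D}_1)$ via $F\mathcal{D}_1 = 0$, the orthogonal decomposition \eqref{eq: sslant09} gives $H \in \Gamma(F\mathcal{D}_2)$ and the proof is complete. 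For the cosymplectic case one uses $\overline{\nabla}\phi = 0$ directly (simpler), and for Kenmotsu one uses \eqref{eq: struc12}, where again the $\eta(Z) = 0$ condition suppresses the extra terms.
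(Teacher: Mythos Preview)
Your plan has a genuine gap: working with a vector $Z\in\Gamma(\mathcal D_2)$ does not close. In your first approach, taking the $\mu$-component of the normal part of $\overline{\nabla}_Z(\phi Z)$ leaves you with
\[
g(D_Z FZ,\,V)\;=\;g(fH,\,V)\;=\;-\,g(H,\phi V),\qquad V\in\Gamma(\mu),
\]
and you have no independent control over $g(D_Z FZ,V)$; the normal connection $D$ has no reason to preserve the subbundle $F\mathcal D_2$. If you try to evaluate the left-hand side directly (using $g(D_Z FZ,V)=-g(FZ,\overline{\nabla}_Z V)$, then splitting $FZ=\phi Z-TZ$ and applying the Sasakian identity), after all cancellations you obtain precisely $-g(H,\phi V)$ again, so the relation is a tautology. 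Your second, ``cleaner'' approach has the same fate: differentiating $g(\phi Z,\phi V)=0$ and expanding via $(\overline{\nabla}_Z\phi)$ just reproduces the trivial identity $Z\,g(Z,V)=0$.

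The paper avoids this entirely by taking $X,Y\in\Gamma(\mathcal D_1)$ rather than $\mathcal D_2$. Since $\phi(\mathcal D_1)\subset\mathcal D_1$, one has $FY=0$, so there is no $D_X FY$ term at all; the normal $\mu$-component of $\overline{\nabla}_X(\phi Y)=(\overline{\nabla}_X\phi)Y+\phi\overline{\nabla}_X Y$ yields (for $Z\in\Gamma(\mu)$)
\[
g(h(X,\phi Y),Z)\;=\;g(fh(X,Y),Z).
\]
Totally umbilic then gives $g(X,\phi Y)\,g(H,Z)=-g(X,Y)\,g(H,\phi Z)$. Interchanging $X$ and $Y$ and using that $g(X,\phi Y)$ is antisymmetric while $g(X,Y)$ is symmetric forces $g(X,Y)\,g(H,\phi Z)=0$, hence $g(H,\phi Z)=0$ for all $Z\in\Gamma(\mu)$; $\phi$-invariance of $\mu$ (Proposition~\ref{prop: sslant01}) finishes. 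The missing idea in your proposal is exactly this choice of $\mathcal D_1$ vectors together with the symmetrization trick.
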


\begin{proof}
We give its proof when $N$ is Kenmotsu.

Since $\xi$ is tangent to $M$, by Proposition \ref {prop: sslant01}, $\mu$ is $\phi$-invariant (i.e., $\phi(\mu) = \mu$).
Given $X,Y\in \Gamma(\mathcal{D}_1)$ and $Z\in \Gamma(\mu)$, we have
\begin{align*}
&\nabla_X \phi Y + h(X, \phi Y)    \\
&= \overline{\nabla}_X \phi Y     \\
&= g(\phi X, Y)\xi - \eta(Y)\phi X + \phi \overline{\nabla}_X Y  \\
&= g(\phi X, Y)\xi - \eta(Y)\phi X + T\nabla_X Y + F\nabla_X Y + th(X, Y) + fh(X, Y)
\end{align*}
so that by taking the inner product of both sides with $Z$,
\begin{equation}\label{eq: dist012}
g(h(X, \phi Y), Z) = g(fh(X, Y), Z).
\end{equation}
From (\ref {eq: dist012}), by (\ref {eq: dist010}) we obtain
\begin{equation}\label{eq: dist013}
g(X, \phi Y) g(H, Z) = -g(X, Y) g(H, \phi Z).
\end{equation}
Interchanging the role of $X$ and $Y$,
\begin{equation}\label{eq: dist014}
g(Y, \phi X) g(H, Z) = -g(Y, X) g(H, \phi Z).
\end{equation}
Comparing (\ref {eq: dist013}) with (\ref {eq: dist014}), we have
$$
g(X, Y) g(H, \phi Z) = 0,
$$
which means $H\in \Gamma(F\mathcal{D}_2)$.
\end{proof}

Using Lemma \ref {lem: dist04}, we immediately obtain

\begin{corollary}
Let $M$ be a pointwise semi-slant totally umbilic submanifold of an almost contact metric manifold $(N,\phi,\xi,\eta,g)$ with the semi-slant function $\theta$.
Assume that $\xi$ is tangent to $M$ and $N$ is one of the following three manifolds: cosymplectic, Sasakian, Kenmotsu.

If $\theta = 0$ on $M$, then $M$ is a totally geodesic submanifold of $N$.
\end{corollary}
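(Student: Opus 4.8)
The plan is to combine Lemma~\ref{lem: dist04} with the totally umbilic hypothesis and the assumption $\theta = 0$ to force the mean curvature vector $H$ to vanish, which by \eqref{eq: dist010} makes $h$ vanish identically. First I would apply Lemma~\ref{lem: dist04}: since $M$ is a pointwise semi-slant totally umbilic submanifold with $\xi$ tangent to $M$ and $N$ one of cosymplectic, Sasakian, Kenmotsu, we get $H \in \Gamma(F\mathcal{D}_2)$. Next, observe what $\theta = 0$ means: by Remark~\ref{rem: sslant02}(1) we have $T^2 X = -X$ for $X \in \Gamma(\mathcal{D}_2)$, and by \eqref{eq: sslant023} we get $g(FX, FX) = \sin^2\theta\, g(X,X) = 0$ for $X \in \Gamma(\mathcal{D}_2)$, so $F|_{\mathcal{D}_2} = 0$, i.e. $F\mathcal{D}_2 = 0$.

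Combining these two observations, $H \in \Gamma(F\mathcal{D}_2) = \{0\}$, so $H = 0$. Then the totally umbilic condition \eqref{eq: dist010} reads $h(X,Y) = g(X,Y)H = 0$ for all $X,Y \in \Gamma(TM)$, which is precisely the statement that $M$ is totally geodesic in $N$. This completes the argument.

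I expect the argument to be essentially immediate given the machinery already in place; the only point requiring a moment's care is verifying that $\theta = 0$ genuinely kills $F$ on $\mathcal{D}_2$. One should note that $F$ automatically vanishes on $\mathcal{D}_1$ by \eqref{eq: sslant06} (since $\phi(\mathcal{D}_1) \subset \mathcal{D}_1$), and also that $\xi \in \Gamma(\mathcal{D}_1)$ by Remark~\ref{rem: sslant01}(1) so it does not interfere; hence $\theta = 0$ forces $F \equiv 0$ on all of $TM$, which is even stronger than needed. The only genuine dependency is Lemma~\ref{lem: dist04}, whose proof already handles the three-manifold case split, so no separate case analysis is needed here. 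Thus there is no real obstacle; the statement is a clean corollary.
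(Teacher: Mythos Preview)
Your proposal is correct and is precisely the argument the paper has in mind: the corollary is stated immediately after Lemma~\ref{lem: dist04} with the phrase ``Using Lemma~\ref{lem: dist04}, we immediately obtain'' and no further proof, so the intended reasoning is exactly yours---$H\in\Gamma(F\mathcal{D}_2)$ together with $\theta=0\Rightarrow F\mathcal{D}_2=0$ forces $H=0$, hence $h\equiv 0$ by the umbilic condition.
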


\section{Warped product submanifolds}\label{warped}

In this section we consider the non-existence of some type of warped product pointwise semi-slant submanifolds
and investigate the properties of some warped product pointwise semi-slant submanifolds.

\begin{theorem}
Let $N = (N,\phi,\xi,\eta,g)$ be an almost contact metric manifold
and $M = B\times_f \overline{F}$ a nontrivial warped product submanifold of $N$.
Assume that $\xi$ is normal to $M$ and $N$ is one of the following three manifolds: cosymplectic, Sasakian, Kenmotsu.

Then there does not exist a proper pointwise semi-slant submanifold $M$ of $N$ such that $\mathcal{D}_1 = T\overline{F}$ and $\mathcal{D}_2 = TB$.
\end{theorem}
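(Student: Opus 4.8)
The plan is to argue by contradiction: suppose such a proper pointwise semi-slant warped product $M = B \times_f \overline{F}$ exists with $\mathcal{D}_1 = T\overline{F}$ and $\mathcal{D}_2 = TB$. Since $\xi$ is normal to $M$ and $\phi(\mathcal{D}_1) \subset \mathcal{D}_1$, the key is to exploit the warped-product identity $\nabla_X Z = (X \ln f) Z$ from \eqref{eq: warp2} (valid for $X \in \Gamma(TB) = \Gamma(\mathcal{D}_2)$ and $Z \in \Gamma(T\overline{F}) = \Gamma(\mathcal{D}_1)$) together with the Gauss formula and the covariant-derivative formula for $\phi$ in each of the three ambient structures. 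First I would compute $\overline{\nabla}_X (\phi Z)$ in two ways for $X \in \Gamma(\mathcal{D}_2)$ and $Z \in \Gamma(\mathcal{D}_1)$: once by writing $\phi Z = TZ \in \Gamma(\mathcal{D}_1)$ (since $F$ vanishes on $\mathcal{D}_1$) and applying Gauss plus \eqref{eq: warp2}, and once by expanding $(\overline{\nabla}_X \phi)Z + \phi \overline{\nabla}_X Z$ using \eqref{eq: struc10}, \eqref{eq: struc12}, or \eqref{eq: struc14} according to the ambient type, again with \eqref{eq: warp2} controlling $\overline{\nabla}_X Z$.

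The main step is then to take suitable inner products to isolate a relation forcing $\ln f$ to be constant, contradicting nontriviality. Concretely, for $Z \in \Gamma(\mathcal{D}_1)$ with $g(Z,\xi) = 0$ one has $\phi Z = TZ \in \Gamma(\overline{\mathcal{D}}_1)$, and pairing the two expressions for $\overline{\nabla}_X(\phi Z)$ with a vector field $Y \in \Gamma(\mathcal{D}_2) = \Gamma(TB)$ should produce, after using $g(TZ, Y) = 0$, $g(\phi Z, \phi Y) = g(Z,Y) - \eta(Z)\eta(Y) = 0$, and the symmetry of $h$, a term $(X \ln f)\, g(TZ, Y)$-type contribution balanced against a term involving $(Y \ln f)$ and $g(Z,Z)$ or $g(TZ,TZ) = \cos^2\theta\, g(Z,Z)$ from \eqref{eq: ang06}. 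Handling the $\xi$-terms is where the three cases genuinely differ: for cosymplectic all of $\overline{\nabla}\phi, \overline{\nabla}\xi$ vanish; for Sasakian the extra term is $g(X,Z)\xi - \eta(Z)X$, which since $X \in \Gamma(\mathcal{D}_2), Z \in \Gamma(\mathcal{D}_1)$ and $\xi \perp M$ contributes nothing tangential; for Kenmotsu the term $g(\phi X, Z)\xi - \eta(Z)\phi X$ must be checked to vanish or be absorbed similarly. In all three, the upshot should be $(Y \ln f)\, g(Z, W) = 0$ for all $Y \in \Gamma(\mathcal{D}_2)$ and $Z, W \in \Gamma(\mathcal{D}_1)$, hence $Y \ln f = 0$ for all $Y \in \Gamma(TB)$, so $f$ is constant on $B$ and $M$ is trivial, a contradiction.

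The hard part will be the bookkeeping of the $\xi$-dependent terms and making sure the pairing is chosen so that the $T$-part of $\phi Z$ does not leak a nonzero contribution through $h$: since $h(X, TZ)$ need not vanish, I expect to need the second part of Lemma \ref{lem: sslant01} or the identities \eqref{eq: sslant06}–\eqref{eq: sslant08} to control $th(X,Z)$ and $fh(X,Z)$ and to verify that the only surviving term is the one proportional to $(Y \ln f)$. An alternative, possibly cleaner route is to use the well-known fact that for a warped product $B \times_f \overline{F}$ the second fundamental form of the fibers satisfies $h^{\overline{F}}(Z,W) = -g(Z,W)\nabla(\ln f)$ as a $B$-valued expression, combined with the submanifold Gauss formula, to directly read off that $\mathcal{D}_1$-directions see the gradient of $\ln f$; but I would still need the contact identities to kill the normal $\xi$-contributions, so the case split persists. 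I would present the Sasakian case in detail and remark that cosymplectic and Kenmotsu follow by the same computation with the appropriate substitution for $(\overline{\nabla}_X\phi)Z$.
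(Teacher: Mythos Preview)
Your plan has a genuine gap in the main computation, and it also misses a structural shortcut for the Sasakian case.

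\medskip
\noindent\textbf{The Sasakian case.} You propose to present the Sasakian case in detail, but in fact no computation is needed (or possible) there: since $\xi$ is normal to $M$, Lotta's theorem (Theorem~\ref{thm: ang02}) forces $M$ to be anti-invariant, so $\phi(TM)\subset TM^{\perp}$. This is incompatible with a nontrivial $\phi$-invariant distribution $\mathcal{D}_1 = T\overline{F}$, and the statement follows at once. The paper disposes of Sasakian in one line this way and only carries out the computation for Kenmotsu (cosymplectic being identical).

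\medskip
\noindent\textbf{The computation itself.} For the Kenmotsu/cosymplectic case, your specific setup does not produce the constraint you expect. With $X\in\Gamma(\mathcal{D}_2)=\Gamma(TB)$ and $Z\in\Gamma(\mathcal{D}_1)=\Gamma(T\overline{F})$ you have $\phi Z = TZ$ (no $F$-part), so both expansions of $\overline{\nabla}_X(\phi Z)$ give the \emph{same} term $(X\ln f)\,TZ$:
\[
\overline{\nabla}_X(TZ) = (X\ln f)\,TZ + h(X,TZ),\qquad
\phi\overline{\nabla}_X Z = (X\ln f)\,TZ + th(X,Z) + fh(X,Z).
\]
The $(X\ln f)$ contributions cancel, leaving only $h(X,TZ)=th(X,Z)+fh(X,Z)$. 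Pairing with any $Y\in\Gamma(\mathcal{D}_2)$ then yields merely $g(th(X,Z),Y)=0$, which is content-free since $t(TM^{\perp})\subset\mathcal{D}_2$ anyway. No $(Y\ln f)$ term appears because $Y$ is only a test vector; it is never differentiated. So the ``$(X\ln f)\,g(TZ,Y)$ balanced against $(Y\ln f)$'' mechanism you describe does not materialise.

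\medskip
\noindent\textbf{What the paper does instead.} The paper takes two vectors $X,Y\in\Gamma(\mathcal{D}_1)=\Gamma(T\overline{F})$ and one $Z\in\Gamma(\mathcal{D}_2)=\Gamma(TB)$, starts from $Z(\ln f)\,g(X,Y)=g(\overline{\nabla}_X Z,Y)$, and moves $\phi$ across twice. The point is that $Z\in\mathcal{D}_2$ has $FZ\neq 0$, so the second application of $\phi$ brings in $T^2Z=-\cos^2\theta\,Z$ and produces
\[
\sin^2\theta\,Z(\ln f)\,g(X,Y)=g(h(X,Y),FTZ)-g(h(X,\phi Y),FZ).
\]
Symmetrising in $X,Y$ gives $g(h(X,\phi Y),FZ)=g(h(Y,\phi X),FZ)$; a separate direct expansion of $g(h(X,\phi Y),FZ)$ then yields $TZ(\ln f)\,g(X,\phi Y)=0$, and since $T$ is bijective on $\mathcal{D}_2$ (proper case) and $\mathcal{D}_1$ is $\phi$-invariant of positive dimension, one concludes $Z(\ln f)=0$ for all $Z\in\Gamma(TB)$. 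The essential ingredient you are missing is to work with $Z\in\mathcal{D}_2$ so that $FZ$ survives and links the warping function to the second fundamental form via $h(\cdot,\cdot)$ paired against $FZ$.
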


\begin{proof}
If $N$ is Sasakian, then by Theorem \ref {thm: ang02}, it is obviously true.

We will prove it when $N$ is Kenmotsu. For the case of $N$ to be cosymplectic, we can prove it in the same way.

Suppose that there exists a proper pointwise semi-slant submanifold $M = B\times_f \overline{F}$ of $N$
such that $\mathcal{D}_1 = T\overline{F}$ and $\mathcal{D}_2 = TB$. We will induce contradiction.

Given $X,Y\in \Gamma(T\overline{F})$ and $Z\in \Gamma(TB)$, by using (\ref {eq: warp2}), (\ref {eq: struc12}), and Remark \ref {rem: sslant02},    we get
\begin{align*}
&Z(\ln f) g(X, Y)      \\
&= g(\overline{\nabla}_X Z, Y)   \\
&= g(\phi\overline{\nabla}_X Z, \phi Y) + \eta(\overline{\nabla}_X Z) \eta(Y) \\
&= g(\overline{\nabla}_X (TZ + FZ) - (g(\phi X, Z)\xi - \eta(Z) \phi X), \phi Y)  \\
&= g(\overline{\nabla}_X (TZ + FZ), \phi Y)  \\
&= -g(\overline{\nabla}_X (T^2Z + FTZ) - (g(\phi X, TZ)\xi - \eta(TZ)\phi X), Y) + g(\overline{\nabla}_X FZ, \phi Y)  \\
&= \cos^2 \theta g(\nabla_X Z, Y) + g(h(X, Y), FTZ) - g(h(X, \phi Y), FZ)
\end{align*}
so that
\begin{equation}\label{eq: warp01}
\sin^2 \theta Z(\ln f) g(X, Y) = g(h(X, Y), FTZ) - g(h(X, \phi Y), FZ).
\end{equation}
Interchanging the role of $X$ and $Y$, we have
\begin{equation}\label{eq: warp02}
\sin^2 \theta Z(\ln f) g(Y, X) = g(h(Y, X), FTZ) - g(h(Y, \phi X), FZ).
\end{equation}
Comparing (\ref {eq: warp01}) with (\ref {eq: warp02}), we obtain
\begin{equation}\label{eq: warp03}
g(h(X, \phi Y), FZ) =  g(h(Y, \phi X), FZ).
\end{equation}
On the other hand,
\begin{align*}
&g(h(X, \phi Y), FZ)      \\
&= g(A_{FZ} X, \phi Y)   \\
&= g(-\overline{\nabla}_X FZ, \phi Y) \\
&= g(-\overline{\nabla}_X (\phi Z - TZ), \phi Y)  \\
&= -g(g(\phi X, Z)\xi - \eta(Z)\phi X + \phi \overline{\nabla}_X Z, \phi Y) + g(\overline{\nabla}_X TZ, \phi Y)  \\
&= -g(\nabla_X Z, Y) + \eta(\overline{\nabla}_X Z) \eta(Y) + g(\nabla_X TZ, \phi Y)  \\
&= -Z(\ln f) g(X, Y) + TZ(\ln f) g(X, \phi Y).
\end{align*}
From (\ref {eq: warp03}), by using the above result, we obtain
\begin{equation}\label{eq: warp04}
TZ(\ln f) g(X, \phi Y) =  0.
\end{equation}
Replacing $Z$ by $\phi Z$ and $X$ by $\phi X$ at (\ref {eq: warp04}), by Remark \ref {rem: sslant02} we get
$$
\cos^2 \theta Z(\ln f) g(X, Y) = 0,
$$
which implies $Z(\ln f) = 0$
so that $f$ is constant, contradiction.
\end{proof}

\begin{theorem}
Let $N = (N,\phi,\xi,\eta,g)$ be an almost contact metric manifold
and $M = B\times_f \overline{F}$ a nontrivial warped product submanifold of $N$.
Assume that $\xi$ is tangent to $M$ and $N$ is one of the following three manifolds: cosymplectic, Sasakian, Kenmotsu.

Then there does not exist a proper pointwise semi-slant submanifold $M$ of $N$ such that $\mathcal{D}_1 = T\overline{F}$ and $\mathcal{D}_2 = TB$.
\end{theorem}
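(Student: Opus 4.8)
The plan is to push the hypothesis that $\xi$ is tangent to $M$ directly against the non-triviality of the warping, so that no analogue of the long computation of the $\xi$-normal case is needed. Since $\xi$ is tangent to $M$ and $\mathcal{D}_1 = T\overline{F}$, Remark \ref{rem: sslant01}(1) forces $\xi \in \Gamma(\mathcal{D}_1) = \Gamma(T\overline{F})$; hence $\xi$ is a unit section of the fibre distribution, $g(Z,\xi) = 0$ for every $Z \in \Gamma(TB) = \Gamma(\mathcal{D}_2)$ by the warped metric (\ref{eq: wmetr}), and therefore $\eta(Z) = 0$ by (\ref{eq: struc5}). I will show that this already forces $Z\ln f = 0$ for all $Z\in\Gamma(TB)$, so that $f$ is constant, contradicting non-triviality.

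First I would record that in each of the three cases $\overline{\nabla}_\xi\xi = 0$: this is immediate from (\ref{eq: struc14}) when $N$ is cosymplectic, from (\ref{eq: struc11}) and (\ref{eq: struc3}) (so $\overline{\nabla}_\xi\xi = -\phi\xi = 0$) when $N$ is Sasakian, and from (\ref{eq: struc13}) together with $\eta(\xi) = 1$ (so $\overline{\nabla}_\xi\xi = \xi - \eta(\xi)\xi = 0$) when $N$ is Kenmotsu. By the Gauss formula (\ref{eq: gauss}) this gives $\nabla_\xi\xi = 0$ on $M$. Now differentiate the identically vanishing function $g(Z,\xi)$ along $\xi$: by metric compatibility, the Gauss formula, and $\nabla_\xi\xi = 0$,
$$
0 = \xi\big(g(Z,\xi)\big) = g(\overline{\nabla}_\xi Z,\xi) + g(Z,\overline{\nabla}_\xi\xi) = g(\nabla_\xi Z,\xi),
$$
while the warped product formula (\ref{eq: warp2}), applied with $\xi$ in the role of the fibre vector, gives $\nabla_\xi Z = (Z\ln f)\,\xi$, hence $g(\nabla_\xi Z,\xi) = (Z\ln f)\,g(\xi,\xi) = Z\ln f$. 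Thus $Z\ln f = 0$ for all $Z\in\Gamma(TB)$, so $f$ is constant, the required contradiction.

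Alternatively the contradiction can be obtained case by case in the style of the earlier proofs: for $Z\in\Gamma(TB)$ one has $\overline{\nabla}_Z\xi = \nabla_Z\xi + h(Z,\xi)$ with $\nabla_Z\xi\in\Gamma(\mathcal{D}_1)$ (the leaves $B\times\{q\}$ being totally geodesic in the warped product), and comparing the tangential part with $\overline{\nabla}_Z\xi = 0$ (cosymplectic), $\overline{\nabla}_Z\xi = -TZ - FZ$ (Sasakian), or $\overline{\nabla}_Z\xi = Z$ (Kenmotsu, using $\eta(Z) = 0$) yields respectively $(Z\ln f)\xi = 0$, or $TZ = 0$ — whence $Z = 0$ by (\ref{eq: sslant021}) and properness ($\cos^2\theta\neq 0$) — or $Z = \nabla_Z\xi \in \mathcal{D}_1\cap\mathcal{D}_2 = \{0\}$; in every case $f$ must be constant. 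The one point needing care is that $\xi$ need not literally be the horizontal lift of a vector field on $\overline{F}$, so (\ref{eq: warp2}) is not applied to it verbatim; but only the component of $\nabla_\xi Z$ along $\xi$ (equivalently, the behaviour of the tangential part of $\overline{\nabla}_Z\xi$ relative to the splitting $TM = \mathcal{D}_1\oplus\mathcal{D}_2$) enters, and that component is robustly $Z\ln f$ — this is precisely how (\ref{eq: warp2}) is used in the proof of the preceding theorem. I do not expect a genuine obstacle here: once $\xi$ is forced into the $\phi$-invariant fibre distribution $\mathcal{D}_1 = T\overline{F}$, it collides with the warping at once.
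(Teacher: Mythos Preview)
Your main argument is correct and considerably shorter than the paper's. The paper establishes, via a lengthy $\phi$-computation in the Sasakian case, the identity
\[
\sin^2\theta\, Z(\ln f)\, g(X,Y) \;=\; g(h(X,Y),FTZ) - g(h(X,\phi Y),FZ) + Z(\ln f)\,\eta(X)\eta(Y)
\]
for $X,Y\in\Gamma(T\overline{F})$, $Z\in\Gamma(TB)$, and then sets $X=Y=\xi$; since $h(\xi,\xi)$ is the normal part of $\overline{\nabla}_\xi\xi=-\phi\xi=0$, the right-hand side collapses and one obtains $\cos^2\theta\, Z(\ln f)=0$. You bypass this identity entirely: from $\overline{\nabla}_\xi\xi=0$ you take the tangential part $\nabla_\xi\xi=0$, and then the warped product geometry alone forces $Z\ln f=0$. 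Both routes ultimately rest on $\overline{\nabla}_\xi\xi=0$ (valid in all three geometries), but yours is uniform across the three cases and avoids the structure-specific calculus with $\phi$.

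Two comments. First, the step ``$\nabla_\xi Z=(Z\ln f)\xi$ by (\ref{eq: warp2})'' is exactly the point you flag: (\ref{eq: warp2}) is written for a lift from $\overline{F}$, and $\xi$ need not be one. The clean fix is to recall that in $B\times_f\overline{F}$ the fibres are totally umbilic with mean curvature $-\nabla\ln f$, so for \emph{any} unit $V\in\Gamma(T\overline{F})$ the $TB$-component of $\nabla_V V$ equals $-\nabla\ln f$; taking $V=\xi$ and using $\nabla_\xi\xi=0$ gives $\nabla\ln f=0$ directly. Equivalently, one checks $g(\nabla_\xi Z,\xi)=g([\xi,Z],\xi)=Z\ln f$ from $g(\xi,\xi)\equiv 1$, without assuming $\xi$ basic. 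Second, in your case-by-case ``alternative'' the cosymplectic branch does not quite close: from $\nabla_Z\xi=0$ you write $(Z\ln f)\xi=0$, but in general $\nabla_Z\xi=(Z\ln f)\xi+[Z,\xi]$ with $[Z,\xi]\in\Gamma(T\overline{F})$, and pairing with $\xi$ only returns the tautology $g([Z,\xi],\xi)=-Z\ln f$. Your Sasakian and Kenmotsu alternatives are fine (indeed they yield a contradiction before non-triviality of $f$ is invoked), and since the main argument already handles cosymplectic this is harmless.
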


\begin{proof}
We will only give its proof when $N$ is Sasakian. For the other cases, we can show them in the same way.

Suppose that there exists a proper pointwise semi-slant submanifold $M = B\times_f \overline{F}$ of $N$ such that
$\mathcal{D}_1 = T\overline{F}$ and $\mathcal{D}_2 = TB$. We will also induce contradiction.

Given $X,Y\in \Gamma(T\overline{F})$ and $Z\in \Gamma(TB)$, by using (\ref {eq: warp2}), (\ref {eq: struc10}), Remark \ref {rem: sslant01},
and Remark \ref {rem: sslant02},    we have
\begin{align*}
&Z(\ln f) g(X, Y)      \\
&= g(\overline{\nabla}_X Z, Y)   \\
&= g(\phi\overline{\nabla}_X Z, \phi Y) + \eta(\overline{\nabla}_X Z) \eta(Y) \\
&= g(\overline{\nabla}_X (TZ + FZ) - (g(X, Z)\xi - \eta(Z) X), \phi Y) + Z(\ln f) \eta(X)\eta(Y) \\
&= g(\overline{\nabla}_X (TZ + FZ), \phi Y)  + Z(\ln f) \eta(X)\eta(Y) \\
&= -g(\overline{\nabla}_X (T^2Z + FTZ) - (g(X, TZ)\xi - \eta(TZ) X), Y)    \\
&\ + g(\overline{\nabla}_X FZ, \phi Y)  + Z(\ln f) \eta(X)\eta(Y) \\
&= \cos^2 \theta g(\nabla_X Z, Y) + g(h(X, Y), FTZ) - g(h(X, \phi Y), FZ)  + Z(\ln f) \eta(X)\eta(Y)
\end{align*}
so that
\begin{eqnarray}
\sin^2 \theta Z(\ln f) g(X, Y) &=& g(h(X, Y), FTZ) - g(h(X, \phi Y), FZ)  \label{eq: warp05}  \\
& &\ + Z(\ln f) \eta(X)\eta(Y).   \nonumber
\end{eqnarray}
Replacing $X$ and $Y$ by $\xi$ at (\ref {eq: warp05}), by using (\ref {eq: struc11}) we obtain
\begin{align*}
\cos^2 \theta Z(\ln f)
&= -g(h(\xi, \xi), FTZ)   \\
&= -g(\overline{\nabla}_\xi \xi, FTZ) \\
&= -g(-\phi \xi, FTZ)  \\
&= 0,
\end{align*}
which implies $Z(\ln f) = 0$ so that
 $f$ is constant, contradiction.
\end{proof}

Now, we will study nontrivial warped product pointwise semi-slant submanifold $M = B\times_f \overline{F}$ of
an almost contact metric manifold $N = (N,\phi,\xi,\eta,g)$ such that $\mathcal{D}_1 = TB$ and $\mathcal{D}_2 = T\overline{F}$.

\begin{lemma}\label{lem: warp01}
Let $M = B\times_f \overline{F}$ be a nontrivial warped product proper pointwise semi-slant submanifold of
an almost contact metric manifold $N = (N,\phi,\xi,\eta,g)$ such that $\mathcal{D}_1 = TB$ and $\mathcal{D}_2 = T\overline{F}$.
Assume that $N$ is one of the following three manifolds: cosymplectic, Sasakian, Kenmotsu.

Then we get
\begin{equation}\label{eq: warp06}
g(A_{FZ} W, X) = g(A_{FW} Z, X)
\end{equation}
for $X\in \Gamma(TB)$ and $Z,W\in \Gamma(T\overline{F})$.
\end{lemma}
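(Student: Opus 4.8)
The plan is to compute $g(A_{FZ}W,X)$ two ways and compare. Since $X\in\Gamma(TB)=\Gamma(\mathcal{D}_1)$ and $Z,W\in\Gamma(T\overline{F})=\Gamma(\mathcal{D}_2)$, I would start from the identity $g(A_{FZ}W,X)=g(h(W,X),FZ)$ coming from \eqref{eq: shape}, and then use the warped product formula \eqref{eq: warp2}, namely $\overline{\nabla}_X W=\nabla_X W+h(X,W)$ with $\nabla_X W=(X\ln f)W$ since $X\in\Gamma(TB)$ and $W\in\Gamma(T\overline{F})$. The key point is that $h(X,W)$ is symmetric in its arguments, so $h(W,X)=h(X,W)$, and I want to reinterpret this mixed term through $\overline{\nabla}_X(\phi W)$ versus $\phi\overline{\nabla}_X W$.

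Concretely, I would write, for $N$ Sasakian (the other two cases being analogous, with \eqref{eq: struc12} or \eqref{eq: struc14} replacing \eqref{eq: struc10}),
\begin{align*}
g(A_{FZ}W,X)
&= g(h(X,W),FZ) = g(\overline{\nabla}_X W,\phi Z) - g(\nabla_X W,TZ)\\
&= g(\overline{\nabla}_X(\phi W),Z) - g((\overline{\nabla}_X\phi)W,Z) - (X\ln f)\,g(W,TZ),
\end{align*}
where I used $g(\overline{\nabla}_X W,\phi Z)=-g(\phi\overline{\nabla}_X W,Z)+\eta(\overline{\nabla}_X W)\eta(Z)$ together with $\eta(Z)=0$ (here I am using that $\xi\in\Gamma(\mathcal{D}_1)=\Gamma(TB)$ by Remark \ref{rem: sslant01}(1) so $Z,W\perp\xi$). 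The term $(\overline{\nabla}_X\phi)W=g(X,W)\xi-\eta(W)X=0$ vanishes because $X\in\Gamma(TB)$ and $W\in\Gamma(T\overline{F})$ are orthogonal and $\eta(W)=0$. For the remaining piece $g(\overline{\nabla}_X(\phi W),Z)$, I would expand $\phi W=TW+FW$ and use \eqref{eq: gauss}, \eqref{eq: weing}, and \eqref{eq: warp2} again: $\overline{\nabla}_X(TW)=(X\ln f)TW+h(X,TW)$ since $TW\in\Gamma(\mathcal{D}_2)=\Gamma(T\overline{F})$, and $\overline{\nabla}_X(FW)=-A_{FW}X+D_X FW$. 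Taking the inner product with $Z\in\Gamma(T\overline{F})$ kills the $D_X FW$ term and the $h(X,TW)$ term, leaving $(X\ln f)\,g(TW,Z)-g(A_{FW}X,Z)$.

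Putting these together yields
\[
g(A_{FZ}W,X) = (X\ln f)\,g(TW,Z) - g(A_{FW}Z,X) - (X\ln f)\,g(W,TZ).
\]
Now $g(TW,Z)=g(\phi W,Z)=-g(W,\phi Z)=-g(W,TZ)$ since $T$ is skew on $TM$, so the two $(X\ln f)$ terms combine into $2(X\ln f)\,g(TW,Z)$, which is not obviously zero — so I would instead symmetrize: swap the roles of $Z$ and $W$ in the displayed identity to get a second equation, and add or subtract. Since $g(A_{FZ}W,X)$ is actually symmetric in $Z,W$ (both equal $g(h(Z,W)$-type expressions? — no, $g(h(W,X),FZ)$ is not manifestly symmetric), the cleanest route is: the term $g(TW,Z)-g(W,TZ)=2g(TW,Z)$ while the corresponding swapped expression produces $2g(TZ,W)=-2g(TW,Z)$; since the left sides of the two equations are $g(A_{FZ}W,X)$ and $g(A_{FW}Z,X)$ and the $-g(A_{FW}Z,X)$, $-g(A_{FZ}W,X)$ cross terms appear, adding the two equations gives $g(A_{FZ}W,X)+g(A_{FW}Z,X) = -g(A_{FW}Z,X)-g(A_{FZ}W,X)$, forcing $g(A_{FZ}W,X)+g(A_{FW}Z,X)=0$ — which is the wrong sign, so I expect the actual bookkeeping to show the $(X\ln f)$ terms cancel within each equation (likely I have a sign error in $g(\overline{\nabla}_X W,\phi Z)$ versus the $-g(\nabla_X W,TZ)$ split), delivering \eqref{eq: warp06} directly. \textbf{The main obstacle} is exactly this careful sign/symmetrization accounting: ensuring that the warping-function terms $(X\ln f)g(TW,Z)$ genuinely drop out (using skew-symmetry of $T$ and the symmetry of $h$) rather than doubling, and checking that the $\eta$-terms vanish uniformly in the cosymplectic, Sasakian, and Kenmotsu cases — for Kenmotsu one picks up $(\overline{\nabla}_X\phi)W=g(\phi X,W)\xi-\eta(W)\phi X$, whose first term is $g(TX,W)\xi$ with $g(TX,W)=0$ since $TX\in\Gamma(\mathcal{D}_1)\perp W$, so it still vanishes.
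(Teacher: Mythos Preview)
Your approach is essentially the paper's own: write $g(A_{FZ}W,X)$ via Gauss--Weingarten, expand through $\phi W=TW+FW$, invoke the warped-product relation $\nabla_X W=(X\ln f)W$, and let the covariant derivative of $\phi$ and the skew-symmetry of $T$ do the rest. The paper carries this out (in the Kenmotsu case) starting instead from $g(A_{FZ}X,W)=-g(\overline{\nabla}_X FZ,W)$ and expanding $FZ=\phi Z-TZ$, but the two computations are mirror images of one another.

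Your suspicion about a sign slip is exactly right, and here is where it occurs: from $g(\overline{\nabla}_X W,\phi Z)=-g(\phi\overline{\nabla}_X W,Z)$ and $\phi\overline{\nabla}_X W=\overline{\nabla}_X(\phi W)-(\overline{\nabla}_X\phi)W$ you should obtain
\[
g(\overline{\nabla}_X W,\phi Z)=-g(\overline{\nabla}_X(\phi W),Z)+g((\overline{\nabla}_X\phi)W,Z),
\]
not $+g(\overline{\nabla}_X(\phi W),Z)-g((\overline{\nabla}_X\phi)W,Z)$. With this correction (and $(\overline{\nabla}_X\phi)W$ vanishing against $Z$ in all three cases, as you checked), the expansion of $-g(\overline{\nabla}_X(\phi W),Z)$ gives
\[
g(A_{FZ}W,X)= -(X\ln f)\,g(TW,Z)+g(A_{FW}Z,X)-(X\ln f)\,g(W,TZ),
\]
and the two warping-function terms now cancel \emph{directly} because $g(TW,Z)+g(W,TZ)=0$. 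No symmetrization is needed; the identity \eqref{eq: warp06} drops out in one line, exactly as in the paper.
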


\begin{proof}
We give its proof when $N$ is Kenmotsu.

Given $X\in \Gamma(TB)$ and $Z,W\in \Gamma(T\overline{F})$, by using (\ref {eq: struc12}), (\ref {eq: sslant06}), and (\ref {eq: warp2}), we obtain
\begin{align*}
&g(A_{FZ} W, X)   \\
&= g(A_{FZ} X, W)   \\
&= -g(\overline{\nabla}_X (\phi Z - TZ), W) \\
&= -g(g(\phi X, Z)\xi - \eta(Z)\phi X + \phi \overline{\nabla}_X Z, W) + g(\overline{\nabla}_X TZ, W)  \\
&= g(\overline{\nabla}_X Z, TW + FW) + g(\nabla_X TZ, W)  \\
&= g(X(\ln f) Z, TW) + g(A_{FW} Z, X) + g(X(\ln f) TZ, W)  \\
&= g(A_{FW} Z, X).
\end{align*}
\end{proof}

\begin{lemma}\label{lem: warp02}
Let $M = B\times_f \overline{F}$ be a nontrivial warped product proper pointwise semi-slant submanifold of
an almost contact metric manifold $N = (N,\phi,\xi,\eta,g)$ such that $\mathcal{D}_1 = TB$ and $\mathcal{D}_2 = T\overline{F}$.
\begin{enumerate}
\item If $N$ is cosymplectic, then
\begin{equation}\label{eq: warp07}
g(A_{FTZ} W, X) = -\phi X(\ln f) g(W, TZ) - \cos^2 \theta X(\ln f) g(W, Z)
\end{equation}
and
\begin{equation}\label{eq: warp08}
g(A_{FZ} W, \phi X) = (X - \eta(X)\xi)(\ln f) g(W, Z) -  \phi X(\ln f) g(TW, Z)
\end{equation}
for $X\in \Gamma(TB)$ and $Z,W\in \Gamma(T\overline{F})$.

\item If $N$ is Sasakian, then
\begin{eqnarray}
g(A_{FTZ} W, X) &=& -\eta(X) g(TZ, W) -\phi X(\ln f) g(W, TZ)  \label{eq: warp09}  \\
& & - \cos^2 \theta X(\ln f) g(W, Z)  \nonumber
\end{eqnarray}
and
\begin{equation}\label{eq: warp010}
g(A_{FZ} W, \phi X) = (X - \eta(X)\xi)(\ln f) g(W, Z) -  \phi X(\ln f) g(TW, Z)
\end{equation}
for $X\in \Gamma(TB)$ and $Z,W\in \Gamma(T\overline{F})$.

\item If $N$ is Kenmotsu, then
\begin{eqnarray}
g(A_{FTZ} W, X) &=& \cos^2 \theta \eta(X) (g(Z, W) - \eta(Z) \eta(W))  \label{eq: warp011} \\
& & -\phi X(\ln f) g(W, TZ) - \cos^2 \theta X(\ln f) g(W, Z)   \nonumber
\end{eqnarray}
and
\begin{equation}\label{eq: warp012}
g(A_{FZ} W, \phi X) = (X - \eta(X)\xi)(\ln f) g(W, Z) -  \phi X(\ln f) g(TW, Z)
\end{equation}
for $X\in \Gamma(TB)$ and $Z,W\in \Gamma(T\overline{F})$.
\end{enumerate}
\end{lemma}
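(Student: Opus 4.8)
The plan is to prove all six identities by a single computational scheme, carrying out the Kenmotsu case in detail; the cosymplectic and Sasakian cases then follow verbatim after replacing the structure equation $(\overline{\nabla}_X\phi)Y = g(\phi X,Y)\xi - \eta(Y)\phi X$ (cf. (\ref{eq: struc12})) by $\overline{\nabla}\phi = 0$ (cf. (\ref{eq: struc14})) or by $(\overline{\nabla}_X\phi)Y = g(X,Y)\xi - \eta(Y)X$ (cf. (\ref{eq: struc10})), which is exactly what produces the three different correction terms in (\ref{eq: warp07}), (\ref{eq: warp09}), (\ref{eq: warp011}). The common device is: for an $F$-valued shape operator, use the symmetry of the shape operator and of $h$ to write the quantity as $-g(\overline{\nabla}_U(FV), W')$ with $W'$ tangent, then decompose $FV = \phi V - TV$ (from (\ref{eq: sslant03})), so that everything is reduced to $\phi$ acting on the ambient Levi--Civita connection, which is controlled by the structure equation of $N$; along the way one feeds in the warped product formula (\ref{eq: warp2}), namely $\nabla_{\widetilde X}\widetilde Z = \nabla_{\widetilde Z}\widetilde X = (\widetilde X\ln f)\widetilde Z$ whenever $\widetilde X$ is horizontal and $\widetilde Z$ is vertical, and Lemma \ref{lem: sslant02} / Remark \ref{rem: sslant02} to rewrite $T^2$ on $\mathcal{D}_2 = T\overline{F}$.

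For the first identity of each part, treating $g(A_{FTZ}W,X)$ with $X\in\Gamma(TB)$ and $Z,W\in\Gamma(T\overline{F})$: write $g(A_{FTZ}W,X) = g(A_{FTZ}X,W) = -g(\overline{\nabla}_X(FTZ),W)$, and substitute $FTZ = \phi(TZ) - T^2 Z$. Since $\mathcal{D}_2 = T\overline{F}$ is $T$-invariant (see (\ref{eq: sslant06})), the warped product formula also governs $\overline{\nabla}_X(TZ) = (X\ln f)TZ + h(X,TZ)$, and together with $\phi h = th + fh$ (see (\ref{eq: sslant04})) and $(\overline{\nabla}_X\phi)(TZ) = g(\phi X,TZ)\xi - \eta(TZ)\phi X$ one expands $\overline{\nabla}_X(\phi TZ) = (\overline{\nabla}_X\phi)(TZ) + \phi\overline{\nabla}_X(TZ)$; pairing with the tangent field $W$ and collecting, the $(X\ln f)$-terms coming from $\phi\overline{\nabla}_X(TZ)$ and from $\overline{\nabla}_X(T^2Z)$ cancel, the structure-equation term contributes the $\cos^2\theta\,\eta(X)(g(Z,W)-\eta(Z)\eta(W))$ correction after using $T^2Z = -\cos^2\theta(Z - \eta(Z)\xi)$, and the leftover term $g(\,th(X,TZ),W)$ is rewritten as $-g(h(X,TZ),FW)$ via $FW = \phi W - TW$ and then reabsorbed using the symmetry $g(A_{FZ'}W,X) = g(A_{FW}Z',X)$ of Lemma \ref{lem: warp01} (applied with $Z' = TZ$). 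The $-\phi X(\ln f)g(W,TZ)$ and $-\cos^2\theta\,X(\ln f)g(W,Z)$ terms are what survives.

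For the second identity of each part, $g(A_{FZ}W,\phi X)$: here I use that $\mathcal{D}_1 = TB$ is $\phi$-invariant, so $\phi X\in\Gamma(TB)$, and write $g(A_{FZ}W,\phi X) = g(A_{FZ}\phi X,W) = -g(\overline{\nabla}_{\phi X}(FZ),W)$, running the same expansion with $\phi X$ in place of the horizontal vector $X$; the warped product formula applies to $\overline{\nabla}_{\phi X}(\cdot)$ exactly as before, the structure-equation and $\eta(\overline{\nabla}\,\cdot\,\cdot)$ contributions regroup into the coefficient $(X - \eta(X)\xi)(\ln f)$, and the fact that this comes out the same for all three ambient structures serves as a consistency check on the bookkeeping. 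The genuinely delicate part, and the one I expect to be the main obstacle, is precisely this bookkeeping: keeping track of every $\xi$- and $\eta$-term generated by $(\overline{\nabla}\phi)$, by $\eta(\overline{\nabla}_U W')$, and by $T^2Z = -\cos^2\theta(Z - \eta(Z)\xi)$, and invoking $\overline{\nabla}_X\xi$ (from (\ref{eq: struc13}), (\ref{eq: struc11}), or $\overline{\nabla}\xi = 0$) at the right moment to absorb them; the several cancellations that make the final formulas clean only occur if (\ref{eq: warp2}), the $T$-invariance of $T\overline{F}$, and the structure equation are fed in in the correct order, and one must be prepared — as in Lemma \ref{lem: warp01} — to substitute one of these identities back into another to close the computation. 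Once the accounting is done for Kenmotsu, the cosymplectic and Sasakian cases are mechanical.
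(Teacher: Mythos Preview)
Your scheme of writing $g(A_{FTZ}W,X)=g(A_{FTZ}X,W)=-g(\overline{\nabla}_X(FTZ),W)$ and expanding $FTZ=\phi TZ-T^2Z$ is circular and does not produce the formula. In the Kenmotsu case (and likewise for the others), since $\phi X\in\mathcal D_1$, $TZ\in\mathcal D_2$, and $\eta(TZ)=0$, the structure term $(\overline{\nabla}_X\phi)(TZ)=g(\phi X,TZ)\xi-\eta(TZ)\phi X$ vanishes identically; it cannot supply the $\cos^2\theta\,\eta(X)$ correction you claim. Feeding in $\overline{\nabla}_X(TZ)=(X\ln f)TZ+h(X,TZ)$ and $T^2Z=-\cos^2\theta\,Z$, the two $(X\ln f)$ contributions cancel (as you note) and you are left with
\[
g(A_{FTZ}W,X)=-g(th(X,TZ),W)-X(\cos^2\theta)\,g(Z,W).
\]
But $-g(th(X,TZ),W)=g(h(X,TZ),FW)=g(A_{FW}TZ,X)=g(A_{FTZ}W,X)$ by Lemma~\ref{lem: warp01}, so your ``reabsorption'' step returns the very quantity you started with and yields only the side information $X(\cos^2\theta)=0$, not the identity (\ref{eq: warp011}). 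The same circularity occurs in your treatment of $g(A_{FZ}W,\phi X)$ via $-g(\overline{\nabla}_{\phi X}(FZ),W)$.

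The reason is structural: differentiating along the horizontal vector $X$ (or $\phi X$) and using (\ref{eq: warp2}) only ever produces factors $X(\ln f)$ (or $\phi X(\ln f)$) multiplying vertical vectors; there is no mechanism to generate \emph{both} $X(\ln f)$ and $\phi X(\ln f)$ in the same computation. The paper's proof avoids this by first invoking Lemma~\ref{lem: warp01} to pass to $g(A_{FW}TZ,X)=-g(\overline{\nabla}_{TZ}(\phi W-TW),X)$, i.e.\ differentiating along the \emph{vertical} vector $TZ$. Then the terms $g(\overline{\nabla}_{TZ}W,\phi X)=-g(W,\nabla_{TZ}\phi X)$ and $-g(TW,\overline{\nabla}_{TZ}X)=-g(TW,\nabla_{TZ}X)$ arise, and (\ref{eq: warp2}) applied to $\nabla_{TZ}(\phi X)=(\phi X)(\ln f)\,TZ$ and $\nabla_{TZ}X=(X\ln f)\,TZ$ is what delivers the two distinct warping coefficients. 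The structure term $(\overline{\nabla}_{TZ}\phi)W=g(\phi TZ,W)\xi-\eta(W)\phi TZ$ now genuinely contributes, since $g(\phi TZ,W)=g(T^2Z,W)=-\cos^2\theta\,g(Z,W)$. The second identity is then read off by the substitution $TZ\mapsto Z$, $X\mapsto\phi X$ in this same computation.
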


\begin{proof}
We only give its proof when $N$ is Kenmotsu.

Given $X\in \Gamma(TB)$ and $Z,W\in \Gamma(T\overline{F})$, by using Lemma \ref {lem: warp01}, (\ref {eq: struc12}), Lemma \ref {lem: sslant02},
and (\ref {eq: warp2}), we have
\begin{align*}
&g(A_{FTZ} W, X)   \\
&= g(A_{FW} TZ, X)   \\
&= -g(\overline{\nabla}_{TZ} (\phi W - TW), X) \\
&= -g(g(\phi TZ, W)\xi - \eta(W)\phi TZ + \phi \overline{\nabla}_{TZ} W, X) + g(\overline{\nabla}_{TZ} TW, X)  \\
&= \cos^2 \theta \eta(X) g(Z -\eta(Z)\xi, W) + g(\overline{\nabla}_{TZ} W, \phi X) - g(TW, \overline{\nabla}_{TZ} X)  \\
&= \cos^2 \theta \eta(X) (g(Z, W) - \eta(Z)\eta(W)) -\phi X(\ln f) g(W, TZ) - \cos^2 \theta X(\ln f) g(W, Z).
\end{align*}
Replacing $TZ$ and $X$ by $Z$ and $\phi X$, respectively,
$$
g(A_{FZ} W, \phi X) = (X - \eta(X)\xi)(\ln f) g(W, Z) -  \phi X(\ln f) g(TW, Z).
$$
\end{proof}

To obtain some inequalities on nontrivial warped product proper pointwise semi-slant submanifolds of
cosymplectic, Sasakian, Kenmotsu manifolds in the next section, we need to have

\begin{lemma}\label{lem: warp03}
Let $M = B\times_f \overline{F}$ be a nontrivial warped product proper pointwise semi-slant submanifold of
an almost contact metric manifold $N = (N,\phi,\xi,\eta,g)$ such that $\mathcal{D}_1 = TB$ and $\mathcal{D}_2 = T\overline{F}$.
\begin{enumerate}
\item If $N$ is cosymplectic, then
\begin{equation}\label{eq: warp013}
g(h(X, Y), FZ) = 0
\end{equation}
and
\begin{equation}\label{eq: warp014}
g(h(X, W), FZ) = -\phi X(\ln f) g(W, Z) +  (X - \eta(X)\xi)(\ln f) g(W, TZ)
\end{equation}
for $X,Y\in \Gamma(TB)$ and $Z,W\in \Gamma(T\overline{F})$.

\item If $N$ is Sasakian, then
\begin{equation}\label{eq: warp015}
g(h(X, Y), FZ) = \eta(Z) g(X, Y)
\end{equation}
and
\begin{eqnarray}
g(h(X, W), FZ) &=& -\eta(X) g(FW, FZ) -\phi X(\ln f) g(W, Z)  \label{eq: warp016} \\
& & +  (X - \eta(X)\xi)(\ln f) g(W, TZ)   \nonumber
\end{eqnarray}
for $X,Y\in \Gamma(TB)$ and $Z,W\in \Gamma(T\overline{F})$.

\item If $N$ is Kenmotsu, then
\begin{equation}\label{eq: warp017}
g(h(X, Y), FZ) = \eta(Z) g(\phi X, Y)
\end{equation}
and
\begin{eqnarray}
g(h(X, W), FZ) &=& -\eta(X) \eta(W) \eta(FZ) -\phi X(\ln f) g(W, Z)  \label{eq: warp018} \\
& & +  (X - \eta(X)\xi)(\ln f) g(W, TZ)   \nonumber
\end{eqnarray}
for $X,Y\in \Gamma(TB)$ and $Z,W\in \Gamma(T\overline{F})$.
\end{enumerate}
\end{lemma}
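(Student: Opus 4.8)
The plan is to prove both identities by direct computation, in the spirit of Lemmas~\ref{lem: warp01} and \ref{lem: warp02}: one repeatedly uses the Gauss and Weingarten formulas \eqref{eq: gauss}--\eqref{eq: weing}, the skew-symmetry of $\phi$, the defining identities \eqref{eq: struc14}, \eqref{eq: struc10}, \eqref{eq: struc12} for $\overline{\nabla}\phi$, and the warped product identity \eqref{eq: warp2}, keeping careful track of which vectors are horizontal (in $\mathcal{D}_1=TB$), vertical (in $\mathcal{D}_2=T\overline{F}$), or normal, and invoking $T(\mathcal{D}_2)\subset\mathcal{D}_2$, $\phi(\mathcal{D}_1)\subset\mathcal{D}_1$ from \eqref{eq: sslant06}. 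Since the three ambient cases differ only in which $\overline{\nabla}\phi$-formula is inserted, I would present the cosymplectic case in full and indicate the cosmetic changes for the Sasakian and Kenmotsu cases.

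For the first identity I would start from $g(h(X,Y),FZ)=g(\overline{\nabla}_X Y,FZ)$ (valid because $\nabla_X Y$ is tangent to $M$). As $X,Y\in\Gamma(TB)$ the warped product connection gives $\nabla_X Y\in\Gamma(TB)$, hence $g(\overline{\nabla}_X Y,TZ)=g(\nabla_X Y,TZ)=0$, so $g(h(X,Y),FZ)=g(\overline{\nabla}_X Y,\phi Z)$. Moving $\phi$ across by skew-symmetry and writing $\phi\overline{\nabla}_X Y=\overline{\nabla}_X(\phi Y)-(\overline{\nabla}_X\phi)Y$, the term $g(\overline{\nabla}_X(\phi Y),Z)$ also vanishes because $\phi Y\in\Gamma(\mathcal{D}_1)=\Gamma(TB)$ forces $\nabla_X(\phi Y)\in\Gamma(TB)$. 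This collapses the first identity to $g(h(X,Y),FZ)=g\bigl((\overline{\nabla}_X\phi)Y,Z\bigr)$, into which one substitutes $\overline{\nabla}\phi=0$, \eqref{eq: struc10}, or \eqref{eq: struc12}; using $g(X,Z)=g(\phi X,Z)=0$ (both $X$ and $\phi X$ lie in $TB$, while $Z\in T\overline{F}$) this yields $0$, $\eta(Z)g(X,Y)$, and $\eta(Z)g(\phi X,Y)$, that is, \eqref{eq: warp013}, \eqref{eq: warp015}, \eqref{eq: warp017}.

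For the second identity I would build on Lemma~\ref{lem: warp02}: in all three cases \eqref{eq: warp008}, \eqref{eq: warp010}, \eqref{eq: warp012} give the same value for $g(A_{FZ}W,\phi X)=g(h(W,\phi X),FZ)$. The key step is to feed $-\phi X$ in place of $X$, which is legitimate since $\phi(\mathcal{D}_1)\subset\mathcal{D}_1$; from $\phi^2=-I+\eta\otimes\xi$ and $\eta\circ\phi=0$ one gets $\phi(-\phi X)=X-\eta(X)\xi$ and $\eta(-\phi X)=0$, and combining this with $g(TW,Z)=-g(W,TZ)$ and \eqref{eq: shape} the left side becomes $g(h(X,W),FZ)-\eta(X)g(h(W,\xi),FZ)$, so that
\[
g(h(X,W),FZ)=-\phi X(\ln f)g(W,Z)+(X-\eta(X)\xi)(\ln f)g(W,TZ)+\eta(X)g(h(W,\xi),FZ).
\]
It then remains to evaluate the correction $\eta(X)g(h(W,\xi),FZ)$ via the $\overline{\nabla}_W\xi$-formulas: \eqref{eq: struc14} gives $h(W,\xi)=0$ (cosymplectic); \eqref{eq: struc13} gives $\overline{\nabla}_W\xi=W-\eta(W)\xi$, a tangent vector, so $h(W,\xi)=0$ (Kenmotsu); and \eqref{eq: struc11} gives $\overline{\nabla}_W\xi=-\phi W=-TW-FW$, so $h(W,\xi)=-FW$ (Sasakian). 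Substituting these, recalling $g(FW,FZ)=\sin^2\theta\,g(W,Z)$ from \eqref{eq: sslant023}, and noting that $\eta(X)\eta(W)\eta(FZ)=0$ once $\xi$ is tangent or normal to $M$, reproduces \eqref{eq: warp014}, \eqref{eq: warp016}, \eqref{eq: warp018}.

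The individual manipulations are all short and routine. The step I expect to require the most care is the bookkeeping in the second identity: the substitution $X\mapsto-\phi X$, the separate handling of the $\xi$-direction, and checking that the $\eta$-dependent correction terms in the Sasakian and Kenmotsu formulas come out with exactly the stated coefficients (and, in the Kenmotsu case, reduce to zero under the standing hypothesis on $\xi$).
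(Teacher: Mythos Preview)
Your proposal is correct and follows essentially the same approach as the paper's proof. For the first identity you arrive at $g(h(X,Y),FZ)=g((\overline{\nabla}_X\phi)Y,Z)$ exactly as the paper does (the paper moves the derivative to the other factor via metric compatibility and \eqref{eq: warp2} rather than invoking $\nabla_X Y\in\Gamma(TB)$ directly, but this is only a cosmetic difference); for the second identity both you and the paper substitute $\phi X$ into the common formula \eqref{eq: warp08}/\eqref{eq: warp010}/\eqref{eq: warp012} of Lemma~\ref{lem: warp02} and then evaluate the residual $\eta(X)\,g(h(W,\xi),FZ)$ using the appropriate $\overline{\nabla}_W\xi$-formula. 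The reference to \eqref{eq: sslant023} is unnecessary since \eqref{eq: warp016} is stated in terms of $g(FW,FZ)$ itself, but this does no harm.
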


\begin{proof}
We will give its proof when $N$ is Sasakian.

Given $X,Y\in \Gamma(TB)$ and $Z,W\in \Gamma(T\overline{F})$, by using (\ref {eq: struc10}) and (\ref {eq: warp2}), we get
\begin{align*}
&g(h(X, Y), FZ)   \\
&= g(\overline{\nabla}_X Y, \phi Z - TZ)   \\
&= -g(\phi\overline{\nabla}_X Y,  Z) - g(\overline{\nabla}_X Y, TZ) \\
&= -g(\overline{\nabla}_X \phi Y - (g(X, Y)\xi - \eta(Y)X),  Z) + g(Y, \overline{\nabla}_X TZ)  \\
&= g(\phi Y, X(\ln f)Z) + \eta(Z) g(X, Y) + g(Y, X(\ln f) TZ)  \\
&= \eta(Z) g(X, Y),
\end{align*}
which gives (\ref {eq: warp015}).

Replacing $X$ by $\phi X$ at (\ref {eq: warp010}), we obtain
$$
g(h(X, W), FZ) = \eta(X) \eta(A_{FZ} W) -\phi X(\ln f) g(W, Z) +  (X - \eta(X)\xi)(\ln f) g(W, TZ).
$$
But by using (\ref {eq: shape}) and (\ref {eq: struc11}),
\begin{align*}
\eta(A_{FZ} W)
&= g(A_{FZ} W, \xi)   \\
&= g(h(W, \xi), FZ)  \\
&= g(\overline{\nabla}_W \xi, FZ)  \\
&= g(-\phi W, FZ)  \\
&= -g(FW, FZ),
\end{align*}
which gives (\ref {eq: warp016}).
\end{proof}

\section{inequalities}\label{ineq}

We will consider inequalities for the squared norm of the second fundamental form in terms of a warping function and a semi-slant function
for a warped product submanifold in cosymplectic manifolds, Sasakian manifolds, and Kenmotsu manifolds.

Let $M = B\times_f \overline{F}$ be a $m$-dimensional nontrivial warped product proper pointwise semi-slant submanifold of
a $(2n+1)$-dimensional almost contact metric manifold $(N,\phi,\xi,\eta,g)$ with the semi-slant function $\theta$
such that $\mathcal{D}_1 = TB$, $\mathcal{D}_2 = T\overline{F}$, and $\xi$ is tangent to $M$.

Then by using Remark \ref {rem: sslant02} we can choose a local orthonormal frame
$\{ e_1,e_2,\cdots,$ $e_{2m_1+1},v_1,\cdots,v_{2m_2},w_1,\cdots,w_{2m_2},u_1,\cdots,u_{2r} \}$ of $TN$ such that
$\{ e_1,\cdots,e_{2m_1+1} \} \subset \Gamma(\mathcal{D}_1)$, $\{ v_1,\cdots,v_{2m_2} \} \subset \Gamma(\mathcal{D}_2)$,
$\{ w_1,\cdots,w_{2m_2} \} \subset \Gamma(F\mathcal{D}_2)$, $\{ u_1,\cdots,u_{2r} \} \subset \Gamma(\mu)$ with the following conditions:
\begin{enumerate}
\item $e_{m_1+i} = \phi e_i$, $1\leq i \leq m_1$, $e_{2m_1+1} = \xi$,

\item $v_{m_2+i} = \sec \theta Tv_i$, $1\leq i \leq m_2$,

\item $w_{i} = \csc \theta Fv_i$, $1\leq i \leq 2m_2$,

\item $u_{r+i} = \phi u_i$, $1\leq i \leq r$.
\end{enumerate}
So, we have $m = 2m_1 + 2m_2 + 1$ and $n = m_1 + 2m_2 + r$.

Using the above notations, we obtain

\begin{theorem}\label{thm: ineq01}
Let $M = B\times_f \overline{F}$ be a $m$-dimensional nontrivial warped product proper pointwise semi-slant submanifold of
a $(2n+1)$-dimensional Sasakian manifold $(N,\phi,\xi,\eta,g)$ with the semi-slant function $\theta$
such that $\mathcal{D}_1 = TB$, $\mathcal{D}_2 = T\overline{F}$, and $\xi$ is tangent to $M$.

Assume that $n = m_1 + 2m_2$.

Then we have
\begin{equation}\label{eq: ineq01}
||h||^2 \geq 4m_2 (\csc^2 \theta + \cot^2 \theta) ||\phi \nabla(\ln f)||^2 + 4m_2 \sin^2 \theta
\end{equation}
with equality holding if and only if $g(h(Z, W), V) = 0$ for $Z,W\in \Gamma(T\overline{F})$ and $V\in \Gamma(TM^{\perp})$.
\end{theorem}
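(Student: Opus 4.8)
The plan is to expand $\|h\|^2$ over the adapted orthonormal frame fixed just before the statement and then discard a manifestly non-negative block. The hypothesis $n=m_1+2m_2$ forces $r=0$, hence $\mu=0$ and $TM^\perp=F\mathcal D_2=\mathrm{span}\{w_1,\dots,w_{2m_2}\}$ with $\{w_a\}$ orthonormal. Splitting the tangent frame into the $\mathcal D_1$-part $\{e_1,\dots,e_{2m_1+1}\}$ and the $\mathcal D_2$-part $\{v_1,\dots,v_{2m_2}\}$ and using that $h$ is symmetric, one writes
\begin{align*}
\|h\|^2=\sum_{a=1}^{2m_2}\Bigl(\,&\sum_{i,j=1}^{2m_1+1}g(h(e_i,e_j),w_a)^2+2\sum_{i=1}^{2m_1+1}\sum_{k=1}^{2m_2}g(h(e_i,v_k),w_a)^2\\
&+\sum_{k,l=1}^{2m_2}g(h(v_k,v_l),w_a)^2\Bigr).
\end{align*}
First I would kill the $\mathcal D_1\times\mathcal D_1$ block: since $\xi\in\mathcal D_1$ by Remark~\ref{rem: sslant01}(1) we have $\eta(v_a)=0$, so (\ref{eq: warp015}) gives $g(h(e_i,e_j),w_a)=\csc\theta\,\eta(v_a)\,g(e_i,e_j)=0$. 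Then I would simply drop the non-negative $\mathcal D_2\times\mathcal D_2$ block; it vanishes precisely when $g(h(v_k,v_l),w_a)=0$ for all $k,l,a$, i.e.\ when $g(h(Z,W),V)=0$ for $Z,W\in\Gamma(T\overline F)$, $V\in\Gamma(TM^\perp)$, which is exactly the asserted equality case.

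The heart of the argument is the mixed block. Applying (\ref{eq: warp016}) with $X=e_i$, $W=v_k$, $Z=v_a$ and using (\ref{eq: sslant023}) for $g(Fv_k,Fv_a)$ gives
\[
g(h(e_i,v_k),Fv_a)=-\eta(e_i)\sin^2\theta\,\delta_{ka}-\phi e_i(\ln f)\,\delta_{ka}+\bigl(e_i-\eta(e_i)\xi\bigr)(\ln f)\,g(v_k,Tv_a).
\]
For $e_i=\xi=e_{2m_1+1}$ this collapses to $-\sin^2\theta\,\delta_{ka}$, whence $\sum_{k,a}g(h(\xi,v_k),w_a)^2=\csc^2\theta\sum_{k,a}\sin^4\theta\,\delta_{ka}=2m_2\sin^2\theta$. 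For $1\le i\le2m_1$ (so $\eta(e_i)=0$) I would square, sum over $a$, observe that the cross term disappears because $g(v_k,Tv_k)=0$ ($T$ being skew-symmetric, as $\phi$ is anti-symmetric), and use $\sum_a g(v_k,Tv_a)^2=\|Tv_k\|^2=\cos^2\theta$ by (\ref{eq: sslant022}) (here $Tv_k\in\mathcal D_2$ by (\ref{eq: sslant06})); this yields $\sum_a g(h(e_i,v_k),Fv_a)^2=\phi e_i(\ln f)^2+\cos^2\theta\,e_i(\ln f)^2$. Summing over $k$ produces a factor $2m_2$, and summing over $i=1,\dots,2m_1$ I would invoke $\sum_{i=1}^{2m_1}\phi e_i(\ln f)^2=\|\phi\nabla(\ln f)\|^2$ and $\sum_{i=1}^{2m_1}e_i(\ln f)^2=\|\nabla(\ln f)\|^2-\xi(\ln f)^2=\|\phi\nabla(\ln f)\|^2$, both valid because $\nabla(\ln f)\in TB=\mathcal D_1$, $\phi\nabla(\ln f)\perp\xi$, and $\|\phi U\|^2=\|U\|^2-\eta(U)^2$ by (\ref{eq: struc4}). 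Hence the $\mathcal D_1\times\mathcal D_2$ block equals $2m_2\csc^2\theta(1+\cos^2\theta)\|\phi\nabla(\ln f)\|^2$; the identity $\csc^2\theta(1+\cos^2\theta)=\csc^2\theta+\cot^2\theta$ together with the overall factor $2$ in front of the mixed block then gives $4m_2(\csc^2\theta+\cot^2\theta)\|\phi\nabla(\ln f)\|^2+4m_2\sin^2\theta$, which is (\ref{eq: ineq01}).

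The main obstacle I expect is bookkeeping rather than any conceptual difficulty: keeping the three index ranges straight, verifying that every cross term in the expansion of the squares genuinely vanishes (this rests on $g(v_k,Tv_k)=0$ and on $\phi\nabla(\ln f)$ lying in $\overline{\mathcal D}_1$), and cleanly separating the $\xi$-direction from $\overline{\mathcal D}_1$ inside the sum over $\mathcal D_1$ --- it is precisely the $\xi$-term that produces the additive constant $4m_2\sin^2\theta$ and the $\overline{\mathcal D}_1$-part that produces the $\|\phi\nabla(\ln f)\|^2$ term. One should also check at the outset that the $\mathcal D_1\times\mathcal D_1$ block imposes no equality constraint, being identically zero, so that the equality case is governed solely by the discarded $\mathcal D_2\times\mathcal D_2$ block, consistent with the statement.
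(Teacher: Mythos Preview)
Your proposal is correct and follows essentially the same route as the paper's own proof: the same three-block decomposition with $\mu=0$, elimination of the $\mathcal D_1\times\mathcal D_1$ block via (\ref{eq: warp015}) and $\eta(v_a)=0$, evaluation of the mixed block through (\ref{eq: warp016}) with the $\xi$-direction separated out, and the identical auxiliary identities $\sum_i(\phi e_i(\ln f))^2=\sum_i(e_i(\ln f))^2=\|\phi\nabla(\ln f)\|^2$ and $\delta_{ka}g(v_k,Tv_a)=0$. The only cosmetic difference is that you compute $\sum_a g(v_k,Tv_a)^2=\|Tv_k\|^2=\cos^2\theta$ for each fixed $k$ and then sum, whereas the paper evaluates the double sum $\sum_{j,k}g(v_j,Tv_k)^2=2m_2\cos^2\theta$ directly from the explicit frame; the outcome and the equality analysis are identical.
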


\begin{proof}
Since $\mu = 0$, we get
\begin{align*}
||h||^2
&= \sum_{i,j=1}^{2m_1+1} g(h(e_i, e_j), h(e_i, e_j)) + \sum_{i,j=1}^{2m_2} g(h(v_i, v_j), h(v_i, v_j))   \\
&\ + 2 \sum_{i=1}^{2m_1+1} \sum_{j=1}^{2m_2} g(h(e_i, v_j), h(e_i, v_j))  \\
&= \sum_{i,j=1}^{2m_1+1}  \sum_{k=1}^{2m_2} g(h(e_i, e_j), w_k)^2 + \sum_{i,j=1}^{2m_2} \sum_{k=1}^{2m_2} g(h(v_i, v_j), w_k)^2  \\
&\ + 2 \sum_{i=1}^{2m_1+1} \sum_{j,k=1}^{2m_2} g(h(e_i, v_j), w_k)^2.
\end{align*}
By using Lemma \ref {lem: warp03} and Remark \ref {rem: sslant01}, we obtain
\begin{eqnarray}
||h||^2  &=& \sum_{i,j,k=1}^{2m_2} g(h(v_i, v_j), w_k)^2             \label{eq: ineq02} \\
& &+ 2\csc^2 \theta \sum_{i=1}^{2m_1+1} \sum_{j,k=1}^{2m_2} (-\eta(e_i) g(Fv_j, Fv_k)   \nonumber  \\
& & - \phi e_i(\ln f) g(v_j, v_k) + (e_i - \eta(e_i)\xi)(\ln f) g(v_j, Tv_k))^2     \nonumber  \\
&=& \sum_{i,j,k=1}^{2m_2} g(h(v_i, v_j), w_k)^2    \nonumber  \\
& &+ 2\csc^2 \theta \sum_{i=1}^{2m_1} \sum_{j,k=1}^{2m_2} (-\phi e_i(\ln f) \delta_{jk} + e_i(\ln f) g(v_j, Tv_k))^2 \nonumber  \\
& &+ 2\csc^2 \theta \sum_{j,k=1}^{2m_2} (-\sin^2 \theta \delta_{jk})^2  \nonumber  \\
&=& \sum_{i,j,k=1}^{2m_2} g(h(v_i, v_j), w_k)^2    \nonumber  \\
& &+ 2\csc^2 \theta \sum_{i=1}^{2m_1} \sum_{j,k=1}^{2m_2} ((\phi e_i(\ln f))^2 \delta_{jk} + (e_i(\ln f) g(v_j, Tv_k))^2  \nonumber  \\
& &-2\phi e_i(\ln f) \delta_{jk}\cdot e_i(\ln f) g(v_j, Tv_k)) + 4m_2 \sin^2 \theta,    \nonumber
\end{eqnarray}
where $\delta_{jk}$ is the Kronecker delta for $1\leq j,k \leq 2m_2$.

But
\begin{eqnarray}
\sum_{i=1}^{2m_1} (\phi e_i(\ln f))^2  &=& \sum_{i=1}^{2m_1} g(\phi e_i, \nabla(\ln f))^2             \label{eq: ineq03} \\
&=& \sum_{i=1}^{2m_1} g(e_i, \phi \nabla(\ln f))^2  \nonumber  \\
&=& g(\phi \nabla(\ln f), \phi \nabla(\ln f))     \nonumber  \\
&=& ||\phi \nabla(\ln f)||^2,     \nonumber
\end{eqnarray}
\begin{eqnarray}
\sum_{i=1}^{2m_1} (e_i(\ln f))^2  &=& \sum_{i=1}^{2m_1} g(e_i, \nabla(\ln f))^2             \label{eq: ineq04} \\
&=& g(\nabla(\ln f), \nabla(\ln f)) - (\eta(\nabla(\ln f)))^2  \nonumber  \\
&=& g(\phi \nabla(\ln f), \phi \nabla(\ln f))     \nonumber  \\
&=& ||\phi \nabla(\ln f)||^2,     \nonumber
\end{eqnarray}
\begin{equation}\label{eq: ineq05}
\delta_{jk} g(v_j, Tv_k) = 0,
\end{equation}
By Remark \ref {rem: sslant02},
\begin{eqnarray}
& &\sum_{j,k=1}^{2m_2} g(v_j, Tv_k)^2            \label{eq: ineq06} \\
&=& \sum_{k=1}^{m_2} \sum_{j=1}^{2m_2} g(v_j, Tv_k)^2 + \sum_{k=1}^{m_2} \sum_{j=1}^{2m_2} g(v_j, Tv_{m_2+k})^2  \nonumber  \\
&=& \sum_{k=1}^{m_2} g(\sec \theta Tv_k, Tv_k)^2 + \sum_{k=1}^{m_2} g(v_k, \sec \theta (-\cos^2 \theta)v_k)^2    \nonumber  \\
&=& \sum_{k=1}^{m_2} \sec^2 \theta \cdot \cos^4 \theta + \sum_{k=1}^{m_2} \cos^2 \theta    \nonumber  \\
&=& 2m_2 \cos^2 \theta.     \nonumber
\end{eqnarray}
Applying (\ref {eq: ineq03}), (\ref {eq: ineq04}), (\ref {eq: ineq05}), (\ref {eq: ineq06}) to (\ref {eq: ineq02}), we have
\begin{align*}
||h||^2
&= \sum_{i,j,k=1}^{2m_2} g(h(v_i, v_j), w_k)^2 + 2\csc^2 \theta (2m_2 ||\phi \nabla(\ln f)||^2   \\
&\ + 2m_2 \cos^2 \theta ||\phi \nabla(\ln f)||^2) + 4m_2 \sin^2 \theta
\end{align*}
so that
$$
||h||^2 \geq 4m_2 (\csc^2 \theta + \cot^2 \theta) ||\phi \nabla(\ln f)||^2 + 4m_2 \sin^2 \theta
$$
with equality holding if and only if $g(h(v_i, v_j), w_k) = 0$ for $1\leq i,j,k \leq 2m_2$.

Therefore, the result follows.
\end{proof}

In the same way, we get

\begin{theorem}
Let $M = B\times_f \overline{F}$ be a $m$-dimensional nontrivial warped product proper pointwise semi-slant submanifold of
a $(2n+1)$-dimensional cosymplectic manifold $(N,\phi,\xi,\eta,g)$ with the semi-slant function $\theta$
such that $\mathcal{D}_1 = TB$, $\mathcal{D}_2 = T\overline{F}$, and $\xi$ is tangent to $M$.

Assume that $n = m_1 + 2m_2$.

Then we have
\begin{equation}\label{eq: ineq07}
||h||^2 \geq 4m_2 (\csc^2 \theta + \cot^2 \theta) ||\phi \nabla(\ln f)||^2
\end{equation}
with equality holding if and only if $g(h(Z, W), V) = 0$ for $Z,W\in \Gamma(T\overline{F})$ and $V\in \Gamma(TM^{\perp})$.
\end{theorem}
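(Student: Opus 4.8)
The plan is to run the argument of Theorem~\ref{thm: ineq01} almost verbatim, feeding in the cosymplectic identities \eqref{eq: warp013}--\eqref{eq: warp014} of Lemma~\ref{lem: warp03} in place of the Sasakian ones \eqref{eq: warp015}--\eqref{eq: warp016}. First I would observe that the hypothesis $n = m_1 + 2m_2$ forces $r = 0$, so $\mu = 0$ and $TM^{\perp} = F\mathcal{D}_2$; thus the adapted orthonormal frame of $TN$ reduces to $\{e_1,\dots,e_{2m_1+1},v_1,\dots,v_{2m_2},w_1,\dots,w_{2m_2}\}$ with $w_k = \csc\theta\, Fv_k$. Splitting $||h||^2$ according to the pair types $(e_i,e_j)$, $(v_i,v_j)$, $(e_i,v_j)$ and expanding each value of $h$ in the orthonormal basis $\{w_1,\dots,w_{2m_2}\}$ of $TM^{\perp}$, the first block drops out: by \eqref{eq: warp013} we have $g(h(e_i,e_j),w_k) = 0$ (unlike \eqref{eq: warp015} in the Sasakian case there is no residual term, though $\eta$ vanishes on $\mathcal{D}_2$ anyway). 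Hence $||h||^2 = \sum_{i,j,k=1}^{2m_2} g(h(v_i,v_j),w_k)^2 + 2\sum_{i=1}^{2m_1+1}\sum_{j,k=1}^{2m_2} g(h(e_i,v_j),w_k)^2$.

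Next I would evaluate the mixed block using \eqref{eq: warp014}. For $1\le i\le 2m_1$ we have $\eta(e_i)=0$, so $g(h(e_i,v_j),w_k) = \csc\theta\bigl(-\phi e_i(\ln f)\,\delta_{jk} + e_i(\ln f)\, g(v_j,Tv_k)\bigr)$; for $e_i = e_{2m_1+1} = \xi$ both $\phi\xi = 0$ and $\xi-\eta(\xi)\xi = 0$, so that row contributes nothing. This is exactly where the cosymplectic estimate parts from the Sasakian one: the extra summand $-\eta(X)g(FW,FZ)$ present in \eqref{eq: warp016} would, at $X = \xi$, contribute the term $4m_2\sin^2\theta$ of \eqref{eq: ineq01}, which is now absent. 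Squaring and summing, I would then quote the frame identities \eqref{eq: ineq03}, \eqref{eq: ineq04} to get $\sum_{i=1}^{2m_1}(\phi e_i(\ln f))^2 = \sum_{i=1}^{2m_1}(e_i(\ln f))^2 = ||\phi\nabla(\ln f)||^2$, the vanishing \eqref{eq: ineq05} of the cross term $\delta_{jk}g(v_j,Tv_k)$, and \eqref{eq: ineq06}, i.e. $\sum_{j,k=1}^{2m_2} g(v_j,Tv_k)^2 = 2m_2\cos^2\theta$; all of these go through unchanged since they use only Remark~\ref{rem: sslant02} and the fact that $\xi$ is tangent to $M$.

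Assembling, one gets $||h||^2 = \sum_{i,j,k=1}^{2m_2} g(h(v_i,v_j),w_k)^2 + 2\csc^2\theta\bigl(2m_2 + 2m_2\cos^2\theta\bigr)||\phi\nabla(\ln f)||^2$, and since $2\csc^2\theta(1+\cos^2\theta) = 4(\csc^2\theta+\cot^2\theta)$ this equals $\sum g(h(v_i,v_j),w_k)^2 + 4m_2(\csc^2\theta+\cot^2\theta)||\phi\nabla(\ln f)||^2$, which gives \eqref{eq: ineq07}, with equality precisely when $g(h(v_i,v_j),w_k)=0$ for all $i,j,k$, i.e. when $g(h(Z,W),V)=0$ for $Z,W\in\Gamma(T\overline{F})$ and $V\in\Gamma(TM^{\perp})$, because $\{w_k\}$ spans $TM^{\perp}=F\mathcal{D}_2$. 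The computation is essentially bookkeeping; the one point demanding care — and the feature distinguishing this statement from Theorem~\ref{thm: ineq01} — is checking that the $\xi$-row of $h$ drops out under the cosymplectic identity \eqref{eq: warp014} (ultimately via $\overline{\nabla}\phi = 0$, $\overline{\nabla}\xi = 0$), so that the additive constant $4m_2\sin^2\theta$ disappears.
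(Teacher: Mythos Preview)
Your proposal is correct and follows exactly the route the paper intends: the paper simply writes ``In the same way, we get'' for this theorem, and your write-up faithfully reruns the computation of Theorem~\ref{thm: ineq01} with the cosymplectic identities \eqref{eq: warp013}--\eqref{eq: warp014} in place of \eqref{eq: warp015}--\eqref{eq: warp016}, correctly isolating the vanishing of the $\xi$-row as the reason the $4m_2\sin^2\theta$ term disappears. (One tiny slip: the intermediate simplification should read $\csc^2\theta(1+\cos^2\theta)=\csc^2\theta+\cot^2\theta$, not with an extra factor of $2$; your stated final expression $4m_2(\csc^2\theta+\cot^2\theta)\|\phi\nabla(\ln f)\|^2$ is nonetheless correct.)
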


\begin{theorem}
Let $M = B\times_f \overline{F}$ be a $m$-dimensional nontrivial warped product proper pointwise semi-slant submanifold of
a $(2n+1)$-dimensional Kenmotsu manifold $(N,\phi,\xi,\eta,g)$ with the semi-slant function $\theta$
such that $\mathcal{D}_1 = TB$, $\mathcal{D}_2 = T\overline{F}$, and $\xi$ is tangent to $M$.

Assume that $n = m_1 + 2m_2$.

Then we have
\begin{equation}\label{eq: ineq08}
||h||^2 \geq 4m_2 (\csc^2 \theta + \cot^2 \theta) ||\phi \nabla(\ln f)||^2
\end{equation}
with equality holding if and only if $g(h(Z, W), V) = 0$ for $Z,W\in \Gamma(T\overline{F})$ and $V\in \Gamma(TM^{\perp})$.
\end{theorem}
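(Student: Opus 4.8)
The plan is to run the proof of Theorem~\ref{thm: ineq01} essentially verbatim, the only real difference being which $\xi$-terms from Lemma~\ref{lem: warp03} survive. Since $n = m_1 + 2m_2$ forces $r = 0$, we have $\mu = 0$ and $TM^\perp = F\mathcal{D}_2 = \mathrm{span}\{w_1,\dots,w_{2m_2}\}$, so every term of $||h||^2$ may be expanded against the $w_k$ alone:
\begin{equation*}
||h||^2 = \sum_{i,j=1}^{2m_1+1}\sum_{k=1}^{2m_2} g(h(e_i,e_j),w_k)^2 + \sum_{i,j,k=1}^{2m_2} g(h(v_i,v_j),w_k)^2 + 2\sum_{i=1}^{2m_1+1}\sum_{j,k=1}^{2m_2} g(h(e_i,v_j),w_k)^2 .
\end{equation*}
By Lemma~\ref{lem: warp03}(3), $g(h(X,Y),FZ) = \eta(Z)\,g(\phi X,Y)$ for $X,Y\in\Gamma(TB)$, $Z\in\Gamma(T\overline{F})$, and $\eta(v_k)=g(v_k,\xi)=0$ since $\xi\in\mathcal{D}_1$ (Remark~\ref{rem: sslant01}(1)); hence every $g(h(e_i,e_j),w_k)$ vanishes and the first sum disappears.

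For the mixed term I would use the second identity of Lemma~\ref{lem: warp03}(3),
\begin{equation*}
g(h(X,W),FZ) = -\eta(X)\eta(W)\eta(FZ) - \phi X(\ln f)\,g(W,Z) + (X - \eta(X)\xi)(\ln f)\,g(W,TZ),
\end{equation*}
for $X\in\Gamma(TB)$, $Z,W\in\Gamma(T\overline{F})$; its first summand is zero because $\eta(W)=0$ (and also $\eta(FZ)=g(FZ,\xi)=0$, as $\xi$ is tangent while $FZ$ is normal). With $w_k=\csc\theta\,Fv_k$ and $g(v_j,v_k)=\delta_{jk}$ this gives
\begin{equation*}
g(h(e_i,v_j),w_k) = \csc\theta\,\big(-\phi e_i(\ln f)\,\delta_{jk} + (e_i - \eta(e_i)\xi)(\ln f)\,g(v_j,Tv_k)\big),
\end{equation*}
which vanishes for $e_i=\xi$ (since $\phi\xi=0$ and $\xi-\eta(\xi)\xi=0$), so the outer index effectively runs over $1\le i\le 2m_1$, where also $\eta(e_i)=0$. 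The surviving expression is now exactly the one occurring in the proof of Theorem~\ref{thm: ineq01} with the $-\eta(e_i)\,g(Fv_j,Fv_k)$ piece deleted.

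The remaining computation is then identical to that proof: squaring and summing, the cross term carries the factor $\delta_{jk}\,g(v_j,Tv_k) = g(v_j,\phi v_j) = \Phi(v_j,v_j) = 0$ and drops out; the two surviving pieces are evaluated using $\nabla(\ln f)\in\Gamma(TB)$ and Remark~\ref{rem: sslant02}, yielding $\sum_{i=1}^{2m_1}(\phi e_i(\ln f))^2 = \sum_{i=1}^{2m_1}(e_i(\ln f))^2 = ||\phi\nabla(\ln f)||^2$ as in (\ref{eq: ineq03})--(\ref{eq: ineq04}) and $\sum_{j,k=1}^{2m_2} g(v_j,Tv_k)^2 = 2m_2\cos^2\theta$ as in (\ref{eq: ineq06}). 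Thus
\begin{equation*}
||h||^2 = \sum_{i,j,k=1}^{2m_2} g(h(v_i,v_j),w_k)^2 + 2\csc^2\theta\,\big(2m_2\,||\phi\nabla(\ln f)||^2 + 2m_2\cos^2\theta\,||\phi\nabla(\ln f)||^2\big),
\end{equation*}
and since $\csc^2\theta(1+\cos^2\theta) = \csc^2\theta + \cot^2\theta$, the inequality (\ref{eq: ineq08}) follows, with equality precisely when $g(h(v_i,v_j),w_k)=0$ for all $i,j,k$, i.e.\ when $g(h(Z,W),V)=0$ for $Z,W\in\Gamma(T\overline{F})$ and $V\in\Gamma(TM^\perp)$.

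The only point needing genuine attention is why the additive constant $4m_2\sin^2\theta$ appearing in Theorem~\ref{thm: ineq01} is absent here: in the Sasakian case that constant comes from the $-\eta(X)\,g(FW,FZ)$ term of (\ref{eq: warp016}), whereas the analogous Kenmotsu term $-\eta(X)\eta(W)\eta(FZ)$ of (\ref{eq: warp018}) vanishes identically on $\mathcal{D}_2$ (because $\eta|_{\mathcal{D}_2}=0$ and $\eta(FZ)=0$ when $\xi$ is tangent). Beyond verifying this collapse, the argument is the same orthonormal-frame bookkeeping as before.
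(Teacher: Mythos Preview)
Your proof is correct and follows exactly the approach the paper intends: the paper proves only the Sasakian case (Theorem~\ref{thm: ineq01}) in detail and then states the cosymplectic and Kenmotsu analogues with ``In the same way, we get,'' so the computation you carried out---tracking which $\xi$-terms from Lemma~\ref{lem: warp03}(3) survive and seeing that both $\eta(v_k)$ and $\eta(W)\eta(FZ)$ vanish---is precisely what the paper leaves to the reader. Your explicit explanation of why the $4m_2\sin^2\theta$ constant disappears (the Kenmotsu term $-\eta(X)\eta(W)\eta(FZ)$ replacing the Sasakian $-\eta(X)g(FW,FZ)$) is a useful addition.
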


Let $M = B\times_f \overline{F}$ be a $m$-dimensional nontrivial warped product proper pointwise semi-slant submanifold of
a $(2n+1)$-dimensional almost contact metric manifold $(N,\phi,\xi,\eta,g)$ with the semi-slant function $\theta$
such that $\mathcal{D}_1 = TB$, $\mathcal{D}_2 = T\overline{F}$, and $\xi$ is normal to $M$ with $\xi\in \Gamma(\mu)$.

Then by Propositin \ref {prop: sslant01}, $\mu$ is $\phi$-invariant.

Using Remark \ref {rem: sslant02}, we can choose a local orthonormal frame
$\{ e_1,e_2,\cdots,$ $e_{2m_1},$ $v_1,\cdots,v_{2m_2},w_1,\cdots,w_{2m_2},u_1,\cdots,u_{2r+1} \}$ of $TN$ such that
$\{ e_1,\cdots,e_{2m_1} \} \subset \Gamma(\mathcal{D}_1)$, $\{ v_1,\cdots,v_{2m_2} \} \subset \Gamma(\mathcal{D}_2)$,
$\{ w_1,\cdots,w_{2m_2} \} \subset \Gamma(F\mathcal{D}_2)$, $\{ u_1,\cdots,u_{2r+1} \} \subset \Gamma(\mu)$ with the following conditions:
\begin{enumerate}
\item $e_{m_1+i} = \phi e_i$, $1\leq i \leq m_1$,

\item $v_{m_2+i} = \sec \theta Tv_i$, $1\leq i \leq m_2$,

\item $w_{i} = \csc \theta Fv_i$, $1\leq i \leq 2m_2$,

\item $u_{r+i} = \phi u_i$, $1\leq i \leq r$, $u_{2r+1} = \xi$.
\end{enumerate}
So, we have $m = 2m_1 + 2m_2$ and $n = m_1 + 2m_2 + r$.

Notice that if $N$ is Sasakian, then from Theorem \ref {thm: ang02},  there does not exist such a proper pointwise semi-slant submanifold $M$ of $N$.

Using these notations, in a similar way, we obtain

\begin{theorem}
Let $M = B\times_f \overline{F}$ be a $m$-dimensional nontrivial warped product proper pointwise semi-slant submanifold of
a $(2n+1)$-dimensional Kenmotsu manifold $(N,\phi,\xi,\eta,g)$ with the semi-slant function $\theta$
such that $\mathcal{D}_1 = TB$, $\mathcal{D}_2 = T\overline{F}$, and $\xi$ is normal to $M$ with $\xi\in \Gamma(\mu)$.

Assume that $n = m_1 + 2m_2$.

Then we have
\begin{equation}\label{eq: ineq09}
||h||^2 \geq 4m_2 (\csc^2 \theta + \cot^2 \theta) ||\nabla(\ln f)||^2 + 2m_1
\end{equation}
with equality holding if and only if $g(h(Z, W), V) = 0$ for $Z,W\in \Gamma(T\overline{F})$ and $V\in \Gamma(TM^{\perp})$.
\end{theorem}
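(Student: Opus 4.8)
The plan is to mimic exactly the computation carried out for the Sasakian case in Theorem~\ref{thm: ineq01} and its cosymplectic analogue, but now tracking the extra terms that the Kenmotsu structure equations contribute, together with the fact that $\xi$ lies in the normal bundle inside $\mu$. First I would fix the adapted orthonormal frame described just before the statement, so that $\|h\|^2$ decomposes as a sum over the three blocks $\mathcal{D}_1\times\mathcal{D}_1$, $\mathcal{D}_2\times\mathcal{D}_2$, and $\mathcal{D}_1\times\mathcal{D}_2$, with the normal directions split into $F\mathcal{D}_2$ and $\mu$. Because $\mu$ need no longer be one point (here $\dim\mu=2r+1$ and in particular $u_{2r+1}=\xi$), the normal sum runs over $\{w_k\}\cup\{u_l\}$, and the $u_{2r+1}=\xi$ direction is precisely what will produce the additive constant $2m_1$ rather than a $\sin^2\theta$ term.

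The key computational steps, in order, are: (i) use Lemma~\ref{lem: warp03}(3) to evaluate $g(h(X,Y),FZ)$ and $g(h(X,W),FZ)$ for $X,Y\in\Gamma(TB)$ and $Z,W\in\Gamma(T\overline{F})$, noting that since $\xi$ is normal, $\eta(X)=0$ for $X\in\Gamma(TM)$, so the first formula gives $g(h(X,Y),FZ)=0$ and the second collapses to $g(h(X,W),FZ)=-\phi X(\ln f)\,g(W,Z)+X(\ln f)\,g(W,TZ)$; (ii) separately compute the $\mu$-components $g(h(X,Y),u_l)$ — the only surviving one comes from $u_{2r+1}=\xi$ via the Kenmotsu relation $\overline{\nabla}_X\xi=X-\eta(X)\xi$, which for $X,Y\in\Gamma(TB)$ gives $g(h(X,Y),\xi)=g(\overline{\nabla}_XY,\xi)=-g(Y,\overline{\nabla}_X\xi)=-g(X,Y)$, hence $\sum_{i,j=1}^{2m_1}g(h(e_i,e_j),\xi)^2=2m_1$; one must also check $g(h(X,W),\xi)=0$ and $g(h(Z,W),\xi)$ contributes nothing under the equality hypothesis; (iii) substitute into the block decomposition, and then run the same identities used in Theorem~\ref{thm: ineq01}, namely $\sum_{i=1}^{2m_1}(\phi e_i(\ln f))^2=\sum_{i=1}^{2m_1}(e_i(\ln f))^2=\|\nabla(\ln f)\|^2$ (here there is no $\eta$-correction since $\xi\notin TM$, which is why the bound features $\|\nabla(\ln f)\|^2$ rather than $\|\phi\nabla(\ln f)\|^2$), together with $\delta_{jk}g(v_j,Tv_k)=0$ and $\sum_{j,k=1}^{2m_2}g(v_j,Tv_k)^2=2m_2\cos^2\theta$ from Remark~\ref{rem: sslant02}; (iv) collect the cross term, which after the $2\csc^2\theta$ factor and the identity $\csc^2\theta(1+\cos^2\theta)=\csc^2\theta+\cot^2\theta$ yields $4m_2(\csc^2\theta+\cot^2\theta)\|\nabla(\ln f)\|^2$, add the constant $2m_1$ from step (ii), and drop the non-negative leftover $\sum_{i,j,k=1}^{2m_2}g(h(v_i,v_j),w_k)^2$ plus any $g(h(Z,W),\mu)^2$ terms to obtain the inequality; equality forces all of $g(h(Z,W),V)=0$ for $Z,W\in\Gamma(T\overline{F})$, $V\in\Gamma(TM^{\perp})$, and conversely that condition makes every discarded term vanish.

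The main obstacle I expect is bookkeeping the $\mu$-block correctly: unlike the tangent-$\xi$ theorems, here $\xi$ sits inside the normal space $\mu$, so one has to be careful that (a) the $\xi$-component of $h(X,Y)$ for $X,Y\in\Gamma(TB)$ is genuinely $-g(X,Y)$ and contributes $2m_1$ (not $2m_1+1$, since $\dim B=2m_1$), (b) the $\xi$-component of $h(e_i,v_j)$ vanishes — this needs $\overline{\nabla}_{e_i}v_j$ having no $\xi$-part paired against $e_i$, i.e. $g(h(e_i,v_j),\xi)=-g(v_j,\overline{\nabla}_{e_i}\xi)=-g(v_j,e_i-\eta(e_i)\xi)=0$ since $v_j\perp e_i$ and $\xi\perp TM$, and (c) under the assumption $n=m_1+2m_2$ one has $r=0$, so $\mu=\langle\xi\rangle$ is exactly one-dimensional and no other $\mu$-directions appear; this last point is what keeps the final expression clean. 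Everything else is a routine transcription of the Sasakian proof with $\eta|_{TM}\equiv 0$.
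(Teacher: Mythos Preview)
Your proposal is correct and follows essentially the same route as the paper's proof: the same adapted frame, the same block decomposition of $\|h\|^2$, the same use of Lemma~\ref{lem: warp03}(3) with $\eta|_{TM}\equiv 0$, the Kenmotsu identity $\overline{\nabla}_X\xi=X-\eta(X)\xi$ to extract $\eta(h(e_i,e_j))=-\delta_{ij}$ and $\eta(h(e_i,v_k))=0$, and the same summation identities from Remark~\ref{rem: sslant02}. Your observation that $r=0$ forces $\mu=\langle\xi\rangle$ is exactly how the paper keeps the constant clean at $2m_1$, and your equality analysis (both the $w_k$- and $\xi$-components of $h(v_i,v_j)$ must vanish) matches the paper's.
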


\begin{proof}
Since $\mu = < \xi >$, we obtain
\begin{align*}
||h||^2
&= \sum_{i,j=1}^{2m_1} g(h(e_i, e_j), h(e_i, e_j)) + \sum_{i,j=1}^{2m_2} g(h(v_i, v_j), h(v_i, v_j))   \\
&\ + 2 \sum_{i=1}^{2m_1} \sum_{j=1}^{2m_2} g(h(e_i, v_j), h(e_i, v_j))  \\
&= \sum_{i,j=1}^{2m_1}  (\sum_{k=1}^{2m_2} g(h(e_i, e_j), w_k)^2 + (\eta(h(e_i, e_j)))^2) \\
&\ + \sum_{i,j=1}^{2m_2} (\sum_{k=1}^{2m_2} g(h(v_i, v_j), w_k)^2 + (\eta(h(v_i, v_j)))^2)  \\
&\ + 2 \sum_{i=1}^{2m_1} (\sum_{j,k=1}^{2m_2} g(h(e_i, v_j), w_k)^2 + (\eta(h(e_i, v_j)))^2).
\end{align*}
Using (\ref {eq: struc13}),  we can easily check that $\eta(h(e_i, e_j)) = -\delta_{ij}$ and $\eta(h(e_i, v_k)) = 0$
for $1\leq i,j \leq 2m_1$ and $1\leq k \leq 2m_2$ so that by using Lemma \ref {lem: warp03},
\begin{align*}
||h||^2
&= 2m_1 + \sum_{i,j=1}^{2m_2} (\sum_{k=1}^{2m_2} g(h(v_i, v_j), w_k)^2 + (\eta(h(v_i, v_j)))^2)  \\
&\ + 2\csc^2 \theta \sum_{i=1}^{2m_1} \sum_{j,k=1}^{2m_2} (-\eta(e_i) \eta(v_j) \eta(Fv_k)      \\
& - \phi e_i(\ln f) g(v_j, v_k) + (e_i - \eta(e_i)\xi)(\ln f) g(v_j, Tv_k))^2 \\
&= 2m_1 + \sum_{i,j=1}^{2m_2} (\sum_{k=1}^{2m_2} g(h(v_i, v_j), w_k)^2 + (\eta(h(v_i, v_j)))^2)  \\
&\ + 2\csc^2 \theta \sum_{i=1}^{2m_1} \sum_{j,k=1}^{2m_2} ((\phi e_i(\ln f))^2 \delta_{jk} + (e_i(\ln f) g(v_j, Tv_k))^2     \\
& -2 \phi e_i(\ln f) \delta_{jk} \cdot e_i(\ln f) g(v_j, Tv_k)).
\end{align*}
In a similar way to the proof of Theorem \ref {thm: ineq01}, we also derive the following:
\begin{align*}
&\sum_{i=1}^{2m_1} (\phi e_i(\ln f))^2 = ||\nabla(\ln f)||^2,   \\
&\sum_{i=1}^{2m_1} (e_i(\ln f))^2 = ||\nabla(\ln f)||^2,     \\
&\sum_{j,k=1}^{2m_2}  (g(v_j, Tv_k))^2 = 2m_2 \cos^2 \theta, \\
&\delta_{jk} g(v_j, Tv_k) = 0
\end{align*}
so that
$$
||h||^2 \geq 2m_1 +  4m_2 (\csc^2 \theta + \cot^2 \theta) ||\nabla(\ln f)||^2
$$
with equality holding if and only if $g(h(v_i, v_j), w_k) = 0$ and $g(h(v_i, v_j), \xi) = 0$ for $1\leq i,j,k \leq 2m_2$.

Therefore, the result follows.
\end{proof}

In the same way, we get

\begin{theorem}
Let $M = B\times_f \overline{F}$ be a $m$-dimensional nontrivial warped product proper pointwise semi-slant submanifold of
a $(2n+1)$-dimensional cosymplectic manifold $(N,\phi,\xi,\eta,g)$ with the semi-slant function $\theta$
such that $\mathcal{D}_1 = TB$, $\mathcal{D}_2 = T\overline{F}$, and $\xi$ is normal to $M$ with $\xi\in \Gamma(\mu)$.

Assume that $n = m_1 + 2m_2$.

Then we have
\begin{equation}\label{eq: ineq010}
||h||^2 \geq 4m_2 (\csc^2 \theta + \cot^2 \theta) ||\nabla(\ln f)||^2
\end{equation}
with equality holding if and only if $g(h(Z, W), V) = 0$ for $Z,W\in \Gamma(T\overline{F})$ and $V\in \Gamma(TM^{\perp})$.
\end{theorem}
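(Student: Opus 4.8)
The plan is to mimic the proof of Theorem~\ref{thm: ineq01} (and of the Kenmotsu normal inequality~(\ref{eq: ineq09})), specializing the computation to the cosymplectic identities $\overline{\nabla}\phi=0$ and $\overline{\nabla}\xi=0$. Because the hypothesis $n=m_1+2m_2$ forces $r=0$, the subbundle $\mu$ is one-dimensional with $\mu=\langle\xi\rangle$, so $TM^\perp=F\mathcal{D}_2\oplus\langle\xi\rangle$ and the local orthonormal frame introduced above becomes $\{e_1,\dots,e_{2m_1},v_1,\dots,v_{2m_2},w_1,\dots,w_{2m_2},\xi\}$. First I would split
\begin{align*}
\|h\|^2 &= \sum_{i,j=1}^{2m_1}g(h(e_i,e_j),h(e_i,e_j))+\sum_{i,j=1}^{2m_2}g(h(v_i,v_j),h(v_i,v_j))\\
&\quad +2\sum_{i=1}^{2m_1}\sum_{j=1}^{2m_2}g(h(e_i,v_j),h(e_i,v_j))
\end{align*}
and then expand each summand against the normal frame $\{w_1,\dots,w_{2m_2},\xi\}$.

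The key structural point is that for a cosymplectic $N$ one has $\eta(h(X,Y))=g(h(X,Y),\xi)=g(\overline{\nabla}_XY,\xi)=-g(Y,\overline{\nabla}_X\xi)=0$ for all $X,Y\in\Gamma(TM)$, so every $\xi$-component drops out of the expansion; this is precisely why no additive term $2m_1$ appears here, in contrast with the Kenmotsu normal case, where $\overline{\nabla}_X\xi=X-\eta(X)\xi$. Combining this with Lemma~\ref{lem: warp03}(1) (equation~(\ref{eq: warp013}) annihilates the pure-$\mathcal{D}_1$ contribution, while~(\ref{eq: warp014}), together with $w_k=\csc\theta\,Fv_k$, $g(v_j,v_k)=\delta_{jk}$ and $\eta(e_i)=0$, the last holding since $\xi\perp TM$) gives
$$\|h\|^2=\sum_{i,j,k=1}^{2m_2}g(h(v_i,v_j),w_k)^2+2\csc^2\theta\sum_{i=1}^{2m_1}\sum_{j,k=1}^{2m_2}\bigl(-\phi e_i(\ln f)\,\delta_{jk}+e_i(\ln f)\,g(v_j,Tv_k)\bigr)^2.$$

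Then I would carry out the same bookkeeping as in Theorem~\ref{thm: ineq01}. Since $\nabla(\ln f)\in\Gamma(TB)=\Gamma(\mathcal{D}_1)$, $\mathcal{D}_1$ is $\phi$-invariant, and $\eta(\nabla(\ln f))=0$, one gets $\sum_{i=1}^{2m_1}(\phi e_i(\ln f))^2=\sum_{i=1}^{2m_1}(e_i(\ln f))^2=\|\nabla(\ln f)\|^2$; also $\delta_{jk}g(v_j,Tv_k)=0$ because $T$ is skew (which kills the cross term), and $\sum_{j,k=1}^{2m_2}g(v_j,Tv_k)^2=2m_2\cos^2\theta$ by Remark~\ref{rem: sslant02}. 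Substituting and using $\csc^2\theta(1+\cos^2\theta)=\csc^2\theta+\cot^2\theta$ yields
$$\|h\|^2=\sum_{i,j,k=1}^{2m_2}g(h(v_i,v_j),w_k)^2+4m_2(\csc^2\theta+\cot^2\theta)\|\nabla(\ln f)\|^2,$$
which gives~(\ref{eq: ineq010}), with equality if and only if $g(h(v_i,v_j),w_k)=0$ for all $1\le i,j,k\le2m_2$; since the $\xi$-components vanish automatically, this is equivalent to $g(h(Z,W),V)=0$ for all $Z,W\in\Gamma(T\overline{F})$ and $V\in\Gamma(TM^\perp)$. I expect no serious obstacle: this is the cosymplectic companion of results already established, and the only genuine points of care are the vanishing of the $\xi$-components (from $\overline{\nabla}\xi=0$, which is what removes the $2m_1$ term present in the Kenmotsu normal case) and the two identities for $\sum_i(\phi e_i(\ln f))^2$ and $\sum_{j,k}g(v_j,Tv_k)^2$, both routine given Lemma~\ref{lem: warp03} and Remark~\ref{rem: sslant02}.
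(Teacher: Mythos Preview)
Your proposal is correct and follows exactly the approach the paper intends: the paper proves the Kenmotsu normal case in detail and then states this cosymplectic result with ``In the same way, we get,'' and your argument is precisely that same computation with the simplification $\eta(h(X,Y))=0$ coming from $\overline{\nabla}\xi=0$ (which, as you note, is what deletes the $2m_1$ term present in~(\ref{eq: ineq09})). The use of Lemma~\ref{lem: warp03}(1), the four auxiliary identities, and the equality characterization all match the paper's template.
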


\section{Examples}\label{exam2}

\begin{example}
Define a map $i : \mathbb{R}^4 \mapsto \mathbb{R}^{11}$ by
\begin{align*}
&i(x_1,x_2,x_3,x_4) = (y_1,y_2,\cdots,y_{10},t) = (x_2\sin x_3,x_1\sin x_3,    \\
&x_2\sin x_4,x_1\sin x_4,x_2\cos x_3,x_1\cos x_3,x_2\cos x_4,x_1\cos x_4,x_3,x_4,0)
\end{align*}
Let $M := \{ (x_1,x_2,x_3,x_4)\in \mathbb{R}^4 \mid 0< x_1, x_2 < 1, \ 0< x_3, x_4 < \frac{\pi}{2} \}$.

We define $(\phi,\xi,\eta,g)$ on $\mathbb{R}^{11}$ as follows:
\begin{align*}
&\phi(a_1\tfrac{\partial}{\partial y_1} + \cdots + a_{10}\tfrac{\partial}{\partial y_{10}} + a_{11}\tfrac{\partial}{\partial t}) :=
\sum_{i=1}^5 (-a_{2i}\tfrac{\partial}{\partial y_{2i-1}} + a_{2i-1}\tfrac{\partial}{\partial y_{2i}}),     \\
&\xi := \tfrac{\partial}{\partial t}, \ \eta := dt, \ a_i\in \mathbb{R}, \ 1\leq i \leq 11,
\end{align*}
$g$ is the Euclidean metric on $\mathbb{R}^{11}$.

We easily check that $(\phi,\xi,\eta,g)$ is an almost contact metric structure on $\mathbb{R}^{11}$.
Then $M$ is a pointwise semi-slant submanifold of $\mathbb{R}^{11}$ with the semi-slant function
$\displaystyle{k(x_1,x_2,x_3,x_4) = \arccos (\frac{1}{x_1^2 + x_2^2 + 1})}$
such that $\xi$ is normal to $M$ and
\begin{align*}
\mathcal{D}_1 = &< \sin x_3 \tfrac{\partial}{\partial y_2} + \cos x_3 \tfrac{\partial}{\partial y_6}
+ \sin x_4 \tfrac{\partial}{\partial y_8} + \cos x_4 \tfrac{\partial}{\partial y_{10}},      \\
&\sin x_3 \tfrac{\partial}{\partial y_1} + \cos x_3 \tfrac{\partial}{\partial y_5}
+ \sin x_4 \tfrac{\partial}{\partial y_7} + \cos x_4 \tfrac{\partial}{\partial y_{9}} >,
\end{align*}
\begin{align*}
\mathcal{D}_2 = &< x_2\cos x_3 \tfrac{\partial}{\partial y_1} + x_1\cos x_3 \tfrac{\partial}{\partial y_2} + \tfrac{\partial}{\partial y_3}
-x_2 \sin x_3 \tfrac{\partial}{\partial y_5} -x_1 \sin x_3 \tfrac{\partial}{\partial y_{6}},      \\
&\tfrac{\partial}{\partial y_4} + x_2\cos x_4 \tfrac{\partial}{\partial y_7} + x_1\cos x_4 \tfrac{\partial}{\partial y_8}
-x_2 \sin x_4 \tfrac{\partial}{\partial y_9} -x_1 \sin x_4 \tfrac{\partial}{\partial y_{10}} >.
\end{align*}
Notice that $(\mathbb{R}^{11},\phi,\xi,\eta,g)$ is cosymplectic.
\end{example}

\begin{example}
Define a map $i : \mathbb{R}^5 \mapsto \mathbb{R}^{7}$ by
\begin{align*}
&i(x_1,x_2,\cdots,x_5) = (y_1,y_2,\cdots,y_{6},t)    \\
&= (x_3,x_1, x_5,\sin x_4,0,\cos x_4,x_2).
\end{align*}
Let $M := \{ (x_1,x_2,\cdots,x_5)\in \mathbb{R}^5 \mid 0< x_4 < \frac{\pi}{2} \}$.

We define $(\phi,\xi,\eta,g)$ on $\mathbb{R}^{7}$ as follows:
\begin{align*}
&\phi(a_1\tfrac{\partial}{\partial y_1} + \cdots + a_{6}\tfrac{\partial}{\partial y_{6}} + a_{7}\tfrac{\partial}{\partial t}) :=
\sum_{i=1}^3 (-a_{2i}\tfrac{\partial}{\partial y_{2i-1}} + a_{2i-1}\tfrac{\partial}{\partial y_{2i}}),     \\
&\xi := \tfrac{\partial}{\partial t}, \ \eta := dt, \ a_i\in \mathbb{R}, \ 1\leq i \leq 7,
\end{align*}
$g$ is the Euclidean metric on $\mathbb{R}^{7}$.
It is easy to check that $(\phi,\xi,\eta,g)$ is an almost contact metric structure on $\mathbb{R}^{7}$.

Then $M$ is a pointwise semi-slant submanifold of $\mathbb{R}^{7}$ with the semi-slant function $k(x_1,\cdots,x_5) = x_4$
such that $\xi$ is tangent to $M$ and
\begin{align*}
&\mathcal{D}_1 = < \tfrac{\partial}{\partial y_1}, \tfrac{\partial}{\partial y_2}, \xi >   \\
&\mathcal{D}_2 = < \tfrac{\partial}{\partial y_3}, \cos x_4 \tfrac{\partial}{\partial y_4}
- \sin x_4 \tfrac{\partial}{\partial y_6} >.
\end{align*}
\end{example}

\begin{example}
Let $(N,\phi,\xi,\eta,g_N)$ be an almost contact metric manifold.
Let $M$ be a submanifold of a hyperk\"{a}hler manifold $(\overline{M},J_1,J_2,J_3,g_{\overline{M}})$ such that $M$ is complex
with respect to the complex structure $J_1$ (i.e., $J_1(TM) = TM$) and totally real with respect to the complex structure
$J_2$ (i.e., $J_2(TM) \subset TM^{\perp}$) \cite {B}.
Let $f : \overline{M} \mapsto [0, \frac{\pi}{2}]$ be a $C^{\infty}$-function.
Let $\overline{N} := \overline{M} \times N$ with the natural projections $\pi_1 : \overline{N} \mapsto \overline{M}$ and $\pi_2 : \overline{N} \mapsto N$.

We define $(\overline{\phi},\overline{\xi},\overline{\eta},\overline{g})$ on $\overline{N}$ as follows:
\begin{align*}
&\overline{\phi}(X + Y) := \cos (f\circ \pi_1) J_1 X - \sin (f\circ \pi_1) J_2 X + \phi Y,     \\
&\overline{\xi} := \xi, \quad \overline{\eta} := \eta,      \\
&\overline{g}(Z, W) := g_{\overline{M}}(d\pi_1(Z), d\pi_1(W)) + g_N(d\pi_2(Z), d\pi_2(W))
\end{align*}
for $X\in \Gamma(T\overline{M})$, $Y\in \Gamma(TN)$, $Z,W\in \Gamma(T\overline{N})$.

Here, $\overline{\xi}$ is exactly the horizontal lift of $\xi$ along $\pi_2$ and $\overline{\eta}(Z) := \eta(d\pi_2(Z))$.
Conveniently, we identify  a vector field on $\overline{M}$ (or on $N$) with its horizontal lift.

We can easily check that $(\overline{\phi},\overline{\xi},\overline{\eta},\overline{g})$ is an almost contact metric structure on $\overline{N}$.

Then $M\times N$ is a pointwise semi-slant submanifold of an almost contact metric manifold
$(\overline{N},\overline{\phi},\overline{\xi},\overline{\eta},\overline{g})$  with the semi-slant function $f\circ \pi_1$
such that $\overline{\xi}$ is tangent to $M\times N$ and $\mathcal{D}_1 = TN$, $\mathcal{D}_2 = TM$.
\end{example}

\end{document}